\newcommand\numberthis{\refstepcounter{equation}\tag{\theequation}}
\newcommand{\Aff}{{\mathbb A}}
\newcommand{\C}{{\mathbb C}}
\newcommand{\Q}{{\mathbb Q}}
\newcommand{\R}{{\mathbb R}}
\newcommand{\Z}{{\mathbb Z}}
\newcommand{\N}{{\mathbb N}}
\newcommand{\calA}{{\mathcal A}}
\newcommand{\calC}{{\mathcal C}}
\newcommand{\calF}{{\mathcal F}}
\newcommand{\calG}{{\mathcal G}}
\newcommand{\calI}{{\mathcal I}}
\newcommand{\calM}{{\mathcal M}}
\newcommand{\calN}{{\mathcal N}}
\newcommand{\calO}{{\mathcal O}}
\newcommand{\calP}{{\mathcal P}}
\newcommand{\calR}{{\mathcal R}}
\newcommand{\calT}{{\mathcal T}}
\newcommand{\calX}{{\mathcal X}}
\newcommand{\frakg}{\mathfrak{g}}
\newcommand{\frakd}{\mathfrak{d}}
\newcommand{\frakD}{\mathfrak{D}}
\newcommand{\frakm}{\mathfrak{m}}
\newcommand{\frakj}{\mathfrak{j}}
\newcommand{\fD}{\mathfrak{D}}
\newcommand{\fp}{\mathfrak{p}}
\newcommand{\frakp}{\mathfrak{p}}
\DeclareMathOperator{\Gr}{Gr}
\DeclareMathOperator{\Hom}{Hom}
\newcommand{\cA}{\mathcal{A}}
\newcommand{\cAp}{{\mathcal A_{\text{prin}}}}
\newcommand{\St}{\operatorname{St}}
\newcommand{\Var}{\operatorname{Var}}
\newcommand{\rep}{\operatorname{rep}}
\newcommand{\End}{\operatorname{End}}
\newcommand{\GL}{\operatorname{GL}}
\newcommand{\im}{\operatorname{im}}
\newcommand{\supp}{\operatorname{supp}}
\newcommand{\Ext}{\operatorname{Ext}}
\newcommand{\Spec}{\operatorname{Spec}}
\newcommand{\Mono}{\operatorname{Mono}}
\newcommand{\Rep}{\operatorname{Rep}}
\newcommand{\modu}{\operatorname{mod}}
\newcommand{\Irr}{\operatorname{Irr}}
\newcommand{\op}{\operatorname{op}}
\newcommand{\raw}{{\rightarrow}}
\newcommand{\rraws}{{\rightrightarrows}}
\newcommand{\enpt}{{\mathcal{Q}}}  
\newcommand{\stab}{{\theta}}     
\newcommand{\seed}{\textbf{s}}
\newcommand{\wM}{\widetilde{M}}
\newcommand{\wN}{\widetilde{N}}
\newcommand{\wQ}{\widetilde{Q}}
\newcommand{\wpp}{\widetilde{p}}
\newcommand{\wseed}{\widetilde{\textbf{s}}}
\newcommand{\n}{\textsl{n}}
\newcommand{\Dprin}{\frakD_{\seed}^{\calA_{\text{prin}}}}
\newcommand{\calE}{{\textsf{g}}}
\newcommand{\prin}{{\text{prin}}}
\newcommand{\g}{{\text{g}}}
\newcommand{\bc}{{\textbf{c}}}
\newtheorem{theorem}{Theorem}[section]
\newtheorem{lemma}[theorem]{Lemma}
\newtheorem{prop}[theorem]{Proposition}
\newtheorem{definition}[theorem]{Definition}
\newtheorem{example}[theorem]{Example}
\newtheorem{remark}[theorem]{Remark}
\title{Theta functions and quiver Grassmannians}
\author{Man-Wai Cheung}
\date{}
\begin{document}

\newcommand{\Addresses}{{
  \bigskip
  \footnotesize
  
  \textsc{Man Wai Cheung, Department of Mathematics, One Oxford Street Cambridge, Harvard University, MA 02138}\par\nopagebreak
  \textit{E-mail address:} \texttt{mwcheung@math.harvard.edu}
}}

\maketitle

\begin{abstract}
  In this article, we use the relationship between cluster scattering diagrams and stability scattering diagrams to relate quiver representations with these diagrams.
With a notion of positive crossing of a path $\gamma$, we show that if $\gamma$ has positive crossing in the scattering diagram, then it goes in the opposite direction of the Auslander-Reiten quiver of $Q$. 
We then give the Hall algebra theta functions which recover the cluster character formula by the Euler characteristic map.
At last, we define the Hall algebra broken lines and then are able to give the stratification of the quiver Grassmannians by the bending of the broken lines.
\end{abstract}

\section{Introduction}

Fomin and Zelekinsky set up the theory of cluster algebras in 2000 in order to understand total positivity in algebraic groups and canonical bases in quantum groups.
 Roughly speaking, a cluster algebra is a subring of a field of rational functions.
 To define a cluster algebra,
 instead of knowing all the generators at the beginning,
 we start with initial data called an \textit{initial seed} which includes some cluster variables.
 There is a procedure called mutation to generate more seeds, and
 a cluster algebra is defined to be the subring generated by the cluster variables in all the seeds.
 
The theory developed rapidly and connected with many other areas, e.g., Poisson geometry, integrable systems, higher Teichm\"uller spaces, algebraic geometry, and quiver representations.
 Later Fock and Goncharov introduced a geometric point of view in \cite{FG}.
 They introduce the $\calA$ and $\calX$ cluster varieties which can be obtained by gluing `seed tori' by birational map called cluster transformations.
 
In another world in mathematics, namely mirror symmetry, the concepts of
scattering diagram, theta functions and broken lines were introduced to obtain
a deeper understanding of the dualities involved.
 Two-dimensional scattering diagrams were introduced by Kontsevich and Soibelman in \cite{KS} to study K3 surfaces.
 Later Gross and Siebert \cite{GS} considered general scattering diagrams
 to describe toric degenerations of Calabi-Yau varieties in order to construct mirror pairs.
 On the other hand,
 the notion of broken lines was developed by Gross in \cite{grossp2} to understand Landau-Ginzburg mirror symmetry for ${\mathbb P}^2$.
 Then Siebert, Carl and Pumperla \cite{CPS} made use of broken lines to describe regular functions in the context of \cite{GS}, and in particular to construct Landau-Ginzburg mirrors to varieties with effective anti-canonical bundle.
 In order to construct mirrors to log Calabi-Yau surfaces with maximal boundary with similar ideas,
 theta functions were introduced by Gross, Hacking and Keel in \cite{GHKlog}.
An earlier suggestion of Abouzaid, Gross and Siebert of calculating products
using tropical Morse trees was made explicit in \cite{GHKlog} with a formula 
for multiplication of theta functions in terms of trees of broken lines.
 
The discoveries of \cite{GHK_bir} and \cite{ghkk} revealed that there is a strong connection between cluster algebras and scattering diagrams. 
 In particular, we can view each chamber in the scattering diagram as one of the seeds in the cluster algebras.
 The chambers can then be viewed as corresponding to the `seed tori' which are `glued' along the walls in the scattering diagrams.
 Theta functions can then be viewed as cluster variables.
 The set of theta functions are proposed to be a canonical basis for cluster algebras.
 There are many other proposed bases as well, e.g. \cite{geiss2012generic}, \cite{keller2014cluster}, \cite{dupont2013atomic}, \cite{MSW}.
 Together with Gross, Muller, Musiker, Rupel, Stella and Williams \cite{cheung2015greedy}, we have proved that the theta basis in rank 2 agrees with the greedy basis. 
 
A natural question to ask is if there is any deep, intrinsic relation between the two subjects.
First we can relate the scattering diagrams with the Auslander-Reiten quivers in representation theory. 
Given a skew symmetric type cluster algebra, one can consider the quiver $Q$ associated to the algebras. 
Then we can consider the stability scatterings diagram introduced by Bridgeland \cite{Bridge} (stated in Theorem \ref{thm:Hallscattering}) and the corresponding Auslander-Reiten quiver of $Q$ as defined in Definition \ref{def:arquiver}.

\begin{theorem}[Theorem \ref{thm:arquiver}] \label{thm:ar}
Let $\gamma$ be a path crossing the outgoing piece of $\frakd_1 \in \frakD$ and then $\frakd_2 \in \frakD$ with both crossings positive.
  Let $C_i$ be the corresponding indecomposable representation for $\frakd_i$, $i=1,2$.
  Then we have $C_2$ is a predecessor of $C_1$ in the Auslander-Reiten quiver of $Q$.
\end{theorem}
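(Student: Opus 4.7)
The plan is to translate the scattering-diagram hypothesis into numerical slope data, use it to force the non-vanishing of $\Hom(C_2,C_1)$, and finally upgrade this to the statement that $C_2$ is a summand of the middle term of the almost split sequence ending at $C_1$. First I would unpack Bridgeland's identification from Theorem \ref{thm:Hallscattering}: each wall $\frakd_i \in \frakD$ is supported on the hyperplane $\alpha_i^\perp$ with $\alpha_i = \dim C_i$, and its attached scattering function comes from the stack of $\theta$-semistable representations of class $\alpha_i$. Since $\gamma$ crosses $\frakd_1$ and then $\frakd_2$, both along outgoing pieces meeting at (or near) a common joint, $\alpha_1$ and $\alpha_2$ are linearly independent, and the outgoing designation fixes a canonical orientation of each wall relative to $\gamma$.

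Next I would extract the slope inequality implied by the order of crossings. Positivity of each crossing records the direction in which the central charge $Z_{\gamma(t)}$ rotates past the phase of $C_i$; combined with the order $\frakd_1$ before $\frakd_2$, a direct computation in the dual of the stability space shows that on the sliver of chamber between the two crossings one has $\phi(C_2)<\phi(C_1)$, with equality only at the joint. Standard semistability then forces $\Hom(C_1,C_2)=0$. To produce a nonzero map in the other direction I would use the Auslander-Reiten formula $\Ext^1(C_2,C_1)\cong D\Hom(C_1,\tau C_2)$: since $\tau$ shifts phases by a controlled amount and the two walls are taken close to the joint, $\tau C_2$ still has phase strictly below $\phi(C_1)$, so $\Ext^1(C_2,C_1)=0$. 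An Euler-form count using $\langle\alpha_2,\alpha_1\rangle=\dim\Hom(C_2,C_1)-\dim\Ext^1(C_2,C_1)$ then forces $\Hom(C_2,C_1)\neq 0$.

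The last and hardest step is to show that this nonzero morphism $C_2\to C_1$ is actually irreducible, equivalently that $C_2$ is a summand of the middle term $E$ of the almost split sequence $0\to\tau C_1\to E\to C_1\to 0$. Here I would exploit the consistency of $\frakD$ at the joint where $\frakd_1$ and $\frakd_2$ meet: the ordered product of their Hall-algebra wall-automorphisms must agree with the product of automorphisms attached to the remaining walls through the joint. Expanding the commutator of the two wall elements via the Riedtmann-Peng-Xiao formula, its leading term detects precisely the class $[E]-[\tau C_1]-[C_1]$; matching this to the classes appearing on the outgoing walls at the joint, and using that no other wall lies between $\frakd_1$ and $\frakd_2$ with a nontrivial $\Hom$ factor along $\gamma$, pins $C_2$ down as a summand of $E$.

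The main obstacle is this final step, promoting a nonzero morphism to an irreducible one. Existence of a map is essentially a phase argument, but irreducibility requires a fine analysis at the joint together with the full strength of the consistency of the stability scattering diagram and the Hall-algebra interpretation of the AR sequence. I expect the combinatorial minimality of the joint — that no wall strictly between $\frakd_1$ and $\frakd_2$ cuts $\gamma$ — to translate directly into the algebraic minimality characterising AR arrows, completing the identification of $C_2$ as a predecessor of $C_1$.
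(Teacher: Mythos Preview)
You have misread the conclusion. In this paper ``predecessor'' does not mean an immediate predecessor in the Auslander--Reiten quiver (i.e.\ the source of an irreducible map), but merely that there is a path of nonzero nonisomorphisms from $C_2$ to $C_1$; see the definition just before Proposition~\ref{thm:ARprop}. Your entire ``last and hardest step'' --- upgrading $\Hom(C_2,C_1)\neq 0$ to an irreducible morphism via consistency at the joint and a Hall-algebra commutator --- is therefore not required. Worse, that stronger statement is false in general: nothing in the hypothesis says $\frakd_1$ and $\frakd_2$ are adjacent walls, and already for the Kronecker $2$-quiver one can take $C_1=(\C^2\rightrightarrows\C^3)$ and $C_2=(0\rightrightarrows\C)$, which are joined in the AR quiver only through $(\C\rightrightarrows\C^2)$.

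Once the target is corrected, the argument is purely numerical and does not need stability phases at all. The positive-crossing and outgoing-piece hypotheses give exactly
\[
\langle -p^*(\mathbf c_1),\mathbf c_2\rangle \;=\; -\{\mathbf c_1,\mathbf c_2\}\;=\;\chi(\mathbf c_1,\mathbf c_2)-\chi(\mathbf c_2,\mathbf c_1)\;<\;0,
\]
an inequality for the \emph{antisymmetrised} Euler form, not for $\chi(\mathbf c_2,\mathbf c_1)$ alone; your step~4 skips this point. The paper then invokes the structural lemmas for hereditary algebras (Proposition~\ref{thm:PRI}, Lemmas~\ref{thm:lemmaPRI}, \ref{thm:lemmaext}, \ref{thm:extnonzero}): for $C_1,C_2$ lying in preprojective/preinjective components, at most one of $\Hom(C_i,C_j)$, $\Ext^1(C_i,C_j)$ (for each ordering) is nonzero, so the negativity forces
\[
-\{\mathbf c_1,\mathbf c_2\}=-\dim\Ext^1(C_1,C_2)-\dim\Hom(C_2,C_1),
\]
and hence $\Hom(C_2,C_1)\neq 0$ or $\Ext^1(C_1,C_2)\neq 0$, either of which makes $C_2$ a predecessor of $C_1$. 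Your phase argument for $\Hom(C_1,C_2)=0$ and $\Ext^1(C_2,C_1)=0$ is heuristically in the right direction, but it relies on $\tau C_2$ being semistable with a controlled phase for the same $\theta$, which you have not justified; the hereditary lemmas above bypass this entirely.
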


 With insight from Caldero and Chapoton in \cite{ccformula}, theta functions (defined in \eqref{eqn:thetaf}) are related to quiver Grassmannians.
 The definition of theta functions rely on the piecewise linear paths with attached monomials data, called broken lines (Definition \ref{brokendef}), in the scattering diagrams.
Thus, we propose each broken line corresponds to a family of subrepresentations of an ambient quiver representation.
 More specifically, the initial slope of the broken line tells us which ambient representation $D$ we are working with.
 On the other hand, the final slope of the broken line gives one of the subrepresentations $E$ of $D$.
 So we would like to investigate the meaning of bendings from the initial to the finial slopes of the broken lines.

By the machinery of motivic Hall algebras developed by Joyce \cite{joyce2007configurations} and the stability scatterings diagram in \cite{Bridge},
 we can describe the usual wall crossing via conjugation by Hall algebra elements.
In Section \ref{sec:halltheta}, we are able to give the Hall algebra theta functions

 \begin{theorem} \label{thm:theta} [Theorem \ref{thm:sectheta}]
Fix a stability scattering diagram $\frakD$.
Let $\enpt$ be a point in the positive chamber of $\frakD$.
Consider a point $\textbf{m}_0$ in the cluster complex.
Then we can write $\textbf{m}_0 = -\calE (\textbf{d})$ for some $\textbf{d} \in (\Z_{\geq 0})^n$, where $\calE$ defined in \eqref{eq:gg} and we are going to see it is the $g$ vector. Then the Hall algebra theta function is
\[
\vartheta_{(\textbf{m}_0,0)} = \chi (\calG_{\calF (m_0)} (D))  z^{(\textbf{m}_0,0)},
\]
where objects in $\calG_{\calF (m_0)} (D)$ are representations $C$ in $\calF (m_0)$ equipped with an inclusion into $D$.
 By applying the Euler characteristic map $\chi$ (defined in \eqref{eqn:chi1}),
we get
\[
	\vartheta_{(\textbf{m}_0,0)} 
	 = z^{(- \calE (D),0)} \sum_{0 \leq \bc \leq \textbf{d}} \chi (\Gr(c,D)) z^{(p^*(\textbf{\bc}), \textbf{\bc})}.
\]
which is the cluster character formula.
 \end{theorem}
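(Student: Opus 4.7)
The plan is to compute the Hall algebra theta function $\vartheta_{(\textbf{m}_0, 0)}$ directly from its defining sum over broken lines, identify the result as the class of the stack of $\calF(m_0)$-subobjects of $D$, and then push forward through the Euler characteristic map to recover the Caldero-Chapoton formula.

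First I would unpack the definition: in the stability scattering diagram setting, $\vartheta_{(\textbf{m}_0,0)}$ is a sum over broken lines $\beta$ with asymptotic direction $\textbf{m}_0$ and endpoint $\enpt$, weighted by the product of Hall algebra factors attached to each wall that $\beta$ crosses (from Theorem \ref{thm:Hallscattering}) together with a final monomial. Because $\textbf{m}_0 = -\calE(\textbf{d})$ lies in the cluster complex and $\enpt$ lies in the positive chamber, the unique unbent broken line contributes $z^{(\textbf{m}_0, 0)}$; via \eqref{eq:gg} this monomial encodes the ambient quiver representation $D$ of dimension vector $\textbf{d}$. The remaining content of the theorem is the interpretation of the bent broken lines.

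Next I would analyze the bending locally. Each bend is a positive crossing of an outgoing wall $\frakd$, and the attached Hall wall function is supported on the semistable representations of the corresponding slope. Theorem \ref{thm:ar} guarantees that successive positive crossings of a broken line move against the Auslander-Reiten quiver of $Q$, so the Hall factors produced along $\beta$ assemble into a filtration whose subquotients are semistable of decreasing slope. Inductively these filtered objects are exactly representations $C \in \calF(m_0)$ equipped with an inclusion $C \hookrightarrow D$. Summing over all broken lines ending at $\enpt$ with incoming slope $\textbf{m}_0$ therefore reproduces $\chi(\calG_{\calF(m_0)}(D))\, z^{(\textbf{m}_0, 0)}$, establishing the first identity.

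The second identity then follows by applying the Euler characteristic map \eqref{eqn:chi1} to this Hall algebra expression. Stratifying $\calG_{\calF(m_0)}(D)$ by the dimension vector $\bc$ of the subrepresentation, each stratum is a quiver Grassmannian $\Gr(\bc, D)$, and additivity of $\chi$ replaces the stack class by $\sum_{\bc}\chi(\Gr(\bc, D))$. Under the $\cAp$ coordinates a subobject of dimension $\bc$ carries the $\calX$-character $(p^*(\bc), \bc)$, which multiplies the ambient factor $z^{(-\calE(D), 0)} = z^{(\textbf{m}_0, 0)}$; summing $\chi(\Gr(\bc, D)) z^{(p^*(\bc), \bc)}$ over $0 \leq \bc \leq \textbf{d}$ is exactly the Caldero-Chapoton cluster character formula of \cite{ccformula}.

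The main obstacle I foresee is the third paragraph: establishing a bijective, multiplicity-preserving correspondence between broken-line bending sequences and filtrations of subobjects $C \hookrightarrow D$ by semistable pieces. One must verify that the order of crossings along $\beta$ matches an honest HN-type filtration via the AR-direction control of Theorem \ref{thm:ar}, so that the resulting moduli is precisely $\calG_{\calF(m_0)}(D)$ rather than an over- or under-counted variant, and that the Hall algebra products and broken-line monomials combine in the correct order. Once this compatibility is pinned down, the passage to Euler characteristics and the recognition of the Caldero-Chapoton sum is a matter of bookkeeping with Joyce's integration map.
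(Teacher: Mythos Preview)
Your proposal is not incorrect in spirit, but it takes a substantially different and harder route than the paper, and the obstacle you yourself flag at the end is precisely the step the paper avoids.

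The paper does not compute $\vartheta_{(\textbf{m}_0,0)}$ as a sum over broken lines at all. Instead it starts from the \emph{conjugation} definition \eqref{eqn:thetafun}, namely $\vartheta_{(\textbf{m}_0,0)} = 1_{\calF}(m_0)^{-1}\, z^{(\textbf{m}_0,0)}\, 1_{\calF}(m_0)$. The argument is then purely Hall-algebraic: one commutes $z^{(\textbf{m}_0,0)}$ past the graded pieces of $1_{\calF}(m_0)$ using the twist relation \eqref{eqn:comm}, and since for $C \in \calF(m_0)$ one has $\chi(\textbf{c},\textbf{d}) = \dim\Hom(C,D)$ (this uses that $D \in \calF(m_0)$ and the vanishing results of Section~\ref{rmk:ext_zero}), the resulting factor $q^{\dim\Hom(C,D)}$ is interpreted as the class of the stack of pairs $(C,\psi)$ with $\psi:C\to D$ an arbitrary morphism. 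A torsion-pair argument on $\ker\psi$ and $\operatorname{im}\psi$ then factors this stack as $1_{\calF}(m_0) \star \calG_{\calF(m_0)}(D)$, so the leading $1_{\calF}(m_0)^{-1}$ cancels and one is left with $\calG_{\calF(m_0)}(D)\, z^{(\textbf{m}_0,0)}$. No broken lines, no filtrations, no Theorem~\ref{thm:ar}.

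Your approach, by contrast, tries to assemble $\calG_{\calF(m_0)}(D)$ bend-by-bend as a sum over broken lines whose Hall factors record successive semistable subquotients of a filtration of $C\hookrightarrow D$. This is essentially the content of the paper's \emph{later} Section~\ref{sec:strata} (Theorems~\ref{thm:firstbending}--\ref{thm:kbend}), where each bend is analyzed separately and the resulting strata of the quiver Grassmannian are described. That section is more delicate: it requires tracking $\Hom$ and $\Ext^1$ between the wall representations, the AR-quiver direction (Theorem~\ref{thm:arquiver}), and careful Poincar\'e-polynomial bookkeeping, and even then it establishes the stratification rather than directly summing to the full $\calG_{\calF(m_0)}(D)$. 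So what you propose is not circular, but it front-loads the hardest part of the paper to prove a result for which a much shorter argument exists. The ``bijective, multiplicity-preserving correspondence'' you worry about is exactly the thing the conjugation approach sidesteps.
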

 
 Next, we find that each bending of a broken line tells us about stratas of quiver Grassmanians.
The bendings of the broken lines actually describe the filtration of the quiver subrepresentation $C$.
 We will then be able to give a stratification of the quiver Grassmannian.
 Note that the stratifications given by the broken lines differ from those appear in other literature, e.g. \cite{CZformula}.
 
 Consider an indecomposable quiver representation $D$ which is not regular.
 Then $m = -\calE (d)$ lies in the cluster complex.
 Now consider a broken line $\gamma : (\infty  ,0 ] \raw M_{\R} \setminus \{ 0 \}$ with endpoint $\enpt$ in the positive chamber and initial slope $-\calE(\textbf{d})$.
 Let the final slope of the broken line be  $ -\calE(\textbf{d}) + p^*(\bc)$ for some $\bc$.
 Let $\gamma$ bend over the walls $\frakd_1, \dots , \frakd_s$ in the cluster complex and assume all the bendings are positive.
 Let $\lambda_1, \dots, \lambda_s$ be the degree of each bending.
 Denote by $\bc_i \in N^+$ the normal vectors of $\frakd_i$ for $i = 1, \dots s$. First, bending over $\frakd_1$ leads us
 
 \begin{theorem}[Theorem \ref{thm:firstbending}]
 Consider the first bending of $\gamma$ over a general point $\stab_1$ on the wall $\frakd_1$ which corresponds to pre-projective/ pre-injective indecomposable representation $C_1$ of dimension vector $\textbf{c}_1$.
 Then the Hall algebra wall crossing automorphism is
\[ \Phi_{\frakD} (\frakd_1) z^{-(\calE (d),0)}= \sum_{\lambda} \calG^1_{\lambda_1,\bc_1}(D) z^{-(\calE (d),0)},\]
where $\calG^1_{\lambda,\bc_1} (D) = \{ \psi : V \raw D | V $ is a representation of dimension vector $\lambda\bc_1$, and $\ker \psi$ contains no subrepresentation of dimension vector proportional to $\textbf{c}_1 \}$.
Let $V_1 = C_1^{\oplus \lambda_1}$. Then we have
\[
0 \subset V_1
\]
as a filtration obtained after the first bending.
\end{theorem}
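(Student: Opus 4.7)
The plan is to unpack the Hall algebra wall-crossing automorphism $\Phi_{\frakD}(\frakd_1)$ produced by Bridgeland's stability scattering diagram (Theorem \ref{thm:Hallscattering}), apply it to the monomial $z^{-(\calE(d),0)}$ representing $D$, and match the resulting expansion term-by-term with the moduli $\calG^1_{\lambda,\bc_1}(D)$; the filtration statement will then follow from the broken-line bending rule.

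First I would fix a generic stability parameter $\stab_1$ on $\frakd_1$. Since $\frakd_1$ is the wall attached to the non-regular indecomposable $C_1$, the only $\stab_1$-semistables are isomorphic to direct sums $C_1^{\oplus \lambda}$; rigidity of the pre-projective/pre-injective $C_1$ forces this substack to decompose as $\bigsqcup_{\lambda \geq 0} B\Aut(C_1^{\oplus \lambda})$. Consequently $\log \Phi_{\frakD}(\frakd_1)$ is the motivic Hall generating series of these classes. Next, by the $g$-vector dictionary used in the proof of Theorem \ref{thm:theta}, $z^{-(\calE(d),0)}$ represents the Hall-algebra class of $D$; applying $\Phi_{\frakD}(\frakd_1)$ to it and expanding the convolution yields a formal sum indexed by $\lambda \geq 0$ whose $\lambda$-th term is, by the definition of the motivic Hall product, the class of morphisms $\psi : V \to D$ with $V$ of dimension vector $\lambda \bc_1$.

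The main obstacle is explaining why this class is precisely $\calG^1_{\lambda,\bc_1}(D)$ rather than the full Hom-stack of all such $\psi$. I would argue that the Hall-algebra action, read as conjugation by $\exp$ of the semistable stratum, double-counts any $\psi$ whose kernel already contains a $C_1$-isotypic subobject $C_1^{\oplus \mu}$, because such a $\psi$ is also produced at index $\lambda - \mu$ via a shorter Hall convolution. Standard inclusion--exclusion for Joyce's integration map applied to a rigid semistable stratum then isolates precisely those $\psi$ whose kernel contains no subrepresentation of dimension proportional to $\bc_1$, which is the defining condition of $\calG^1_{\lambda,\bc_1}(D)$.

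Finally, the broken-line bending rule attaches the monomial $z^{\lambda_1 (p^*(\bc_1),\bc_1)}$ to $\gamma$ at $\stab_1$, selecting the single summand $\lambda = \lambda_1$ from the Hall series above. The associated pair $(V_1,\psi_1) \in \calG^1_{\lambda_1,\bc_1}(D)$ with $V_1 = C_1^{\oplus \lambda_1}$ furnishes the first step $0 \subset V_1$ of the filtration of $D$ induced by $\gamma$, as asserted.
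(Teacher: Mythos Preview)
Your overall strategy is in the right direction, but there are two genuine gaps.

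First, the claim that $z^{-(\calE(d),0)}$ ``represents the Hall-algebra class of $D$'' is not correct and obscures the actual mechanism. The monomial $z^{-(\calE(d),0)}$ lives in the $\C[\wM]$-factor of $\widehat{H(Q)\otimes_{\C}\C[\wM]}$; it is not a Hall class. The wall crossing is the \emph{conjugation} $1_{ss}(\stab_1)^{-1}\, z^{-(\calE(d),0)}\, 1_{ss}(\stab_1)$, and the point is that commuting the monomial past $1_{ss}(\stab_1)$ via the relation \eqref{eqn:comm} produces the factor $q^{\ell\,\chi(\bc_1,d)} = q^{\ell\dim\Hom(C_1,D)}$. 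It is this affine-space factor, not a ``convolution,'' that converts $[BGL_\ell(C_1)\to\calM]$ into the stack $1_{ss}^D(\ell,\stab_1)$ of pairs $(C_1'\cong C_1^{\oplus\ell},\ \psi:C_1'\to D)$. Without this step you have no way to make maps into $D$ appear at all.

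Second, your appeal to ``standard inclusion--exclusion for Joyce's integration map'' does not supply the needed argument. What the paper does is sharper and avoids alternating sums entirely: one takes a torsion pair $(\calT(\stab'),\calF(\stab'))$ for $\stab'$ just off the wall, decomposes $\ker\psi$ accordingly as $0\to T\to\ker\psi\to F\to 0$, and uses semistability of $C_1'$ to force $\dim T$ to be a multiple of $\bc_1$. This yields a Hall-product factorisation
\[
\sum_\ell 1_{ss}^D(\ell,\stab_1)\;=\;1_{ss}(\stab_1)\ \star\ \sum_\lambda [\calG^1_{\lambda,\bc_1}(D)\to\calM],
\]
so that the left-hand $1_{ss}(\stab_1)^{-1}$ cancels on the nose. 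Your ``double-counting'' intuition is the shadow of this, but the concrete tool --- the torsion-pair splitting of the kernel --- is what makes the cancellation work without any limiting or sign bookkeeping. Once this identity is in place, selecting the $\lambda=\lambda_1$ summand and reading off $V_1=C_1^{\oplus\lambda_1}$ is exactly as you say.
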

 
In Section \ref{sec:2bending} and Section \ref{sec:kbend}, we fineally
explain how to understand bending inductively: 

\begin{theorem} [Theorem \ref{thm:recapsecondbend} for the second bending, Theorem \ref{thm:kbend}] \label{thm:morebending} 
Assume now $\gamma$ is taking the $j$-th bending from $\frakd_{j-1}^+$ to $\frakd_j$.
From the $j-1$-th bending, we obtain the filtration
\[
 0 \subset V_1 \subset \cdots \subset V_{j-1},
 \]
 with the quotient $V_{\ell}/ V_{\ell-1} = C_{\ell-1}^{\oplus \lambda_{\ell-1}}$.
Then the Hall algebra monomial associated to the $j$-th linear piece of the broken line would be
\[[\calG^{j-1}_{\lambda_1 , \cdots , \lambda_{j-1},\bc_{j-1}} \raw \calM] z^{-(\calE (d),0)},\]
where $[\calG^{j-1}_{\lambda_1 , \cdots , \lambda_{j-1},\bc_{j-1}} \raw \calM]$ 
is space having the Poincar\'{e} polynomial as
 \[ [\Aff^{\lambda_j \Ext^1 (V_{j-1}, C_j)} \times
 \Gr(\lambda_j, \Hom(C_j, D/ V_{j-1} )   - \Ext^1 (V_{j-1}, C_j))].
 \]
 At the last bending, 
 we obtain filtration for a subrepresentation $C$ of $D$ with dimension vector $\textbf{c}$.
\begin{equation*} 
	 0 \subset V_1 \subset V_2 \subset \cdots \subset V_s =E, 
\end{equation*}
where $V_j / V_{j-1} = C_j^{\oplus \lambda_j}$.
\end{theorem}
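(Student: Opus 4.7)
The plan is induction on the number $j$ of bendings, with the base case $j=1$ handled by Theorem \ref{thm:firstbending}. For the inductive step I would compute the Hall algebra monomial attached to the linear piece following the $j$-th bending by applying the wall-crossing automorphism $\Phi_{\frakD}(\frakd_j)$ to the monomial produced by the $(j-1)$-th step, and then reinterpret the resulting Hall class as a moduli of extensions of subrepresentations.

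Setup of the induction: assume after $j-1$ bendings we have, inside a suitable Grothendieck ring of stacks, a moduli $\calG^{j-1}$ parameterising filtrations $0 \subset V_1 \subset \cdots \subset V_{j-1}$ with prescribed quotients $V_\ell/V_{\ell-1} \cong C_\ell^{\oplus \lambda_\ell}$ together with an inclusion $V_{j-1} \hookrightarrow D$. By Bridgeland's description of the stability scattering diagram (Theorem \ref{thm:Hallscattering}), $\Phi_{\frakD}(\frakd_j)$ is the exponential of Hall algebra elements supported on the slope locus attached to $\frakd_j$; since this wall is decorated only by the indecomposable $C_j$, the order-$\lambda_j$ contribution is the class $[C_j^{\oplus \lambda_j}]/\Aut$ in Joyce's motivic Hall algebra. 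Multiplying this into $\calG^{j-1}$ produces, by the definition of Hall multiplication, the moduli of short exact sequences $0 \to V_{j-1} \to V_j \to C_j^{\oplus \lambda_j} \to 0$ equipped with the inclusion into $D$.

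Geometric identification: extending $V_{j-1} \hookrightarrow D$ to $V_j \hookrightarrow D$ with $V_j/V_{j-1} \cong C_j^{\oplus \lambda_j}$ amounts to choosing a subrepresentation of $D/V_{j-1}$ abstractly isomorphic to $C_j^{\oplus \lambda_j}$ together with a compatible extension class. Since the path algebra is hereditary, a stratification by the rank of the tautological map $C_j^{\oplus \lambda_j} \to D/V_{j-1}$ combined with the image inside $\Hom(C_j, D/V_{j-1})$ modulo the subspace coming as a boundary from $\Ext^1(V_{j-1}, C_j)$ yields the Grassmannian factor $\Gr(\lambda_j,\, \dim \Hom(C_j, D/V_{j-1}) - \dim \Ext^1(V_{j-1}, C_j))$, while the residual extension ambiguity contributes the affine factor $\Aff^{\lambda_j \dim \Ext^1(V_{j-1}, C_j)}$. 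The positive-crossing hypothesis combined with Theorem \ref{thm:arquiver} guarantees that $C_j$ is an AR predecessor of $C_{j-1}$, so that $\Hom(C_j, D/V_{j-1})$ has the dimension needed for this Grassmannian to be non-empty.

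The main obstacle will be to carry out this geometric identification on the level of motivic classes rather than mere dimension counts. Concretely, I would need Riedtmann's formula expressing Hall numbers as ratios $|\Ext^1(A,B)_C|/|\Hom(A,B)|$, together with a careful treatment of the automorphism groups of $C_j^{\oplus \lambda_j}$ in Joyce's stacky Hall algebra, to certify that the ``affine bundle over Grassmannian'' description is exact and not merely valid in Poincar\'e polynomials. After iterating through $j = 1, \dots, s$, the final linear piece of $\gamma$ records the filtration $0 \subset V_1 \subset \cdots \subset V_s = E$ as a subrepresentation of $D$ with dimension vector $\bc$, and summing the resulting moduli across all admissible bending profiles recovers $\chi(\Gr(\bc, D))$, thereby matching the cluster character term in Theorem \ref{thm:sectheta}.
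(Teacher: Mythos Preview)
Your inductive outline is the right shape, but the mechanism you describe for the $j$-th bending is not correct and, as written, would not produce the stated answer. The wall-crossing automorphism $\Phi_{\frakD}(\frakd_j)$ acts by \emph{conjugation} $1_{ss}(\stab_j)^{-1}\star(\,\cdot\,)\star 1_{ss}(\stab_j)$, not by one-sided multiplication. Extracting ``the order-$\lambda_j$ contribution $[C_j^{\oplus\lambda_j}]/\Aut$'' and ``multiplying this into $\calG^{j-1}$'' ignores the factor $1_{ss}(\stab_j)^{-1}$ on the left, and that inverse is where essentially all of the work lies. In the paper's argument (carried out in full for $j=2$ in Theorem~\ref{thm:recapsecondbend}), one first commutes $z^{-(\calE(d),0)}$ past the right-hand $1_{ss}(\stab_j)$ using the twist relation \eqref{eqn:comm}; this contributes $q^{\dim\Hom(C_j,D)}$ and turns the right factor into a moduli $1_{ss}^D$ of semistables equipped with a map to $D$. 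That commutation, not the Hall product itself, is what brings in the ``inclusion into $D$''. Then $1_{ss}^{-1}$ is expanded explicitly as an alternating sum over products $\prod_l[BGL_{r_l}(C_j)\to\calM]$, and the degree-$s$ terms are grouped pairwise as $\pm(B_k\star\calG^{j-1}-B_{k-1}\star\calG^{j-1}\star B')$. The vanishing $\Ext^1(C_j,C_{j-1})=0$ coming from the positive-crossing analysis at the start of Section~\ref{sec:strata} (cases A and B there, not just the predecessor relation from Theorem~\ref{thm:arquiver}) forces the sequence $0\to V_{j-1}\to U\to C_j^{r_k}\to 0$ to split, and after applying the Poincar\'e polynomial map each pairwise difference becomes $q^{s\gamma}(q^{r_k(\eta-\gamma)}-1)$ divided by the order of a block upper-triangular group; summing over all ordered partitions of $s$ then collapses to $q^{s\gamma}\binom{\eta-\gamma}{s}_q$.

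Your ``stratification by rank of the tautological map'' together with Riedtmann's formula does not substitute for this: Riedtmann's formula controls a single $\Ext$-fibre, whereas what is needed here is a telescoping identity across all the alternating terms coming from $1_{ss}^{-1}$. Without that cancellation you cannot pass from the conjugation to the clean form $[\Aff^{\lambda_j\dim\Ext^1(V_{j-1},C_j)}\times\Gr(\lambda_j,\dim\Hom(C_j,D/V_{j-1})-\dim\Ext^1(V_{j-1},C_j))]$, and in particular you will not see why the Grassmannian dimension is the \emph{difference} $\eta-\gamma$ rather than $\eta$ alone.
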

 
 Our calculation for each strata involves not only the tools from the theory of Hall algebras but also a closer understanding of quiver representations.
 More specifically, we need to understand the $\Hom$ and $\Ext$ groups between indecomposable quiver representations.

\subsection*{Acknowledgements}
The author would like to take this opportunity to thank my advisor, Mark Gross, for his endless patience, support, encouragement, and guidance during my years of PhD study.

\section{Introduction to Cluster algebras and quiver representations}
\subsection{Cluster algebras} \label{sec:cluster}

In this section, we will give the basic definition of cluster algebras. Since this article focuses on quivers, we will restrict ourselves to the skew-symmetric type. 
For the sake of consistency throughout the article, we will adopt the definition from \cite{FG} and \cite{GHK_bir}. The exchange matrix $\epsilon$ (defined below) differs by a transpose from the general definition the cluster community as stated in \cite{cluster1} where the exchange matrix is denoted as $B$.
We will first state the definition for the case without frozen variables and then we will mention the case with principal coefficients.

A cluster algebra $\calA$ of rank $n$ is a subring of the ring of rational functions in $n$ variables over a field $\Bbbk$, where $\Bbbk$ is a field of characteristic zero.
In this article, we will restrict to the skew-symmetric type cluster algebras without frozen variables. 
To define a cluster algebra, we will start by defining the seed. 

\begin{definition} \label{def:cluster}
A \emph{seed} is a pair $\Sigma = (\seed, \epsilon) $, where
\begin{itemize}
	\item $\seed$ is a set of algebraically independent elements $\{A_1, \dots A_\n \} $ lying in the field of rational functions of $n$ independent variables;
	\item $\epsilon=(\epsilon_{ij})$ is a skew-symmetric integer matrix.
\end{itemize}
Elements in $\seed$ are called \emph{cluster variables} and $B$ is called the \emph{exchange matrix}. 
The seed to start with is called the \emph{initial seed}. 
\end{definition}
To generate new seeds, there is a procedure called \emph{mutation} to obtain a new seed 
\[\mu_k ((\seed, \epsilon))  = \{ \{ A_1, \dots, \hat{A_k}, \dots, A_\n \} \cup \{A_k'\}, \mu_k (\epsilon) \}\] for $k = 1, \dots, n$, where 
\[ A_k A_k ' = \prod_{\epsilon_{kj} >0 } A_j^{\epsilon_{kj}} + \prod_{\epsilon_{kj} <0 } A_i^{-\epsilon_{kj}}, \]
and
\[(\mu_k(\epsilon))_{i,j}
 =  \left\{
 \begin{array}{lr}
 -\epsilon_{ij} & \text{if}~ k = i~\text{or}~ k=j; \\ 
 \epsilon_{ij} + \frac{|\epsilon_{ki}| \epsilon_{jk} + \epsilon_{ki}| \epsilon_{jk} |}{2} & \text{otherwise.}
\end{array} \right.
\]
Starting from an initial seed, one can apply all possible sequences of compositions of mutations (possibly infinite). This gives a set (possibly infinite) of all cluster variables generated under mutations. 
The \emph{cluster algebra} $\calA = \calA(\epsilon, \seed)$ is the subalgebra of $\Bbbk (A_1, \dots , A_\n)$ generated by all cluster variables. 
A monomial in cluster variables all of which belong to one extended cluster will be called a \textbf{cluster monomial}

Every cluster algebra belongs to a series $\calA(\epsilon)$ consisting of all cluster algebras of the form $\calA (\epsilon, \seed)$, where $\epsilon$ is fixed and $\seed$ is allowed to vary. Two series $\calA (\epsilon)$ and $\calA(\epsilon')$ are \emph{strongly isomorphic} if $\epsilon$ and $\epsilon'$ can be obtained from each other by a sequence of mutation, modulo simultaneous relabeling of rows and columns.

Let us illustrated the construction by an example.

\begin{example}[Rank 2 cluster algebra] \label{ex:2alg}
In the rank 2 case, the initial seed can be represented as
\[
	\Sigma = ( \{ A_1, A_2 \}, \epsilon = 
	\left(
	\begin{array}{c c}
	0 & b \\ -b & 0
	\end{array}
	\right) ),
\]
where $b$ is a positive integer.
Mutating at 1 gives us
\[A_1 A_1' = 1+ A_2^b . \]
Now we denote $\vartheta_3 = A_1 '$ and $\vartheta_1=A_1$, $\vartheta_2 = A_2$. Then by repeating the mutation process and naming new cluster variables as $\vartheta_{k+1}$, we will get a recursive relation indexed by $k \in \Z$ as
\begin{equation}\label{eq:exchangerelations}
\vartheta_{k-1}\vartheta_{k+1}= \vartheta_k^b+1 
\end{equation}
The \emph{cluster algebra} $\cA(b)$ is the $\Z$-subalgebra of $\Q(A_1,A_2)$ which they
generate.  
Note that all the $\vartheta_k$, $k \neq 1, 2$ lies in $\Q(A_1,A_2)$. 
Each pair $\{\vartheta_k, \vartheta_{k+1}\}$ is called a \emph{cluster} and a monomial in the variables of a cluster is called a \emph{cluster monomial}.
\end{example}

In the above example, if $b =1$, then there are only 5 cluster variables in $\calA (1)$. 
In general, a cluster algebra with a finite number of cluster variables is called a 
\emph{cluster algebra of finite type}.

Fomin-Zelevinsky \cite{cluster2} showed that there is a bijection between the Cartan matrices $C$ for Dynkin diagram with cluster algebra $\calA (\epsilon)$, where $C(\epsilon) =C$ is defined as 
\[
c_{ij} = \left\{
 \begin{array}{ll}
 2 & \text{if}~ i =j; \\ 
 -|\epsilon_{ij}| & \text{if}~ i \neq j. 
\end{array} \right.
\]

\begin{theorem} \cite[Theorem 1.4]{cluster2} \label{thm:finite}
All cluster algebras in any series $\calA(\epsilon, -)$ are simultaneously of finite or infinite type. There is a canonical bijection between the Cartan matrices of finite type and the strong isomorphism classes of series of cluster algebras of finite type.
Under this bijection, a Cartan matrix $C$ of finite type corresponds to the series $\calA(\epsilon, -)$, where $\epsilon$ is an arbitrary skew-symmetric matrix with $C(\epsilon) = C$.
\end{theorem}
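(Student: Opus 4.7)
The plan is to prove the two claims of the theorem in turn. For the first, I would show that the finite/infinite type dichotomy depends only on the mutation class of $\epsilon$: if $\Sigma = (\seed,\epsilon)$ and $\Sigma' = (\seed',\epsilon)$ are two initial seeds in the same series $\calA(\epsilon,-)$, then the exchange graphs reachable from each are isomorphic under the relabeling $\seed \leftrightarrow \seed'$, so the sets of cluster variables they generate have the same cardinality, and finiteness is therefore an invariant of the series.

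For the bijection in the second claim, my approach is the denominator-vector strategy. To each non-initial cluster variable $x$ in $\calA(\epsilon,\seed)$, attach the $d$-vector $d(x) \in \Z^n$ recording the exponents in the denominator when $x$ is expressed as a Laurent polynomial in the initial cluster $\{A_1,\dots,A_n\}$ (available by the Laurent phenomenon), and extend by setting $d(A_i) = -\alpha_i$, the negative simple root. The central technical step, which I expect to be the main obstacle, is to prove that when $C = C(\epsilon)$ is of finite type, the assignment $x \mapsto d(x)$ is a bijection between cluster variables and the set $\Phi_{\geq -1}$ of almost positive roots of the root system associated to $C$. The inductive argument compares how $\mu_k$ acts on $d(x)$ via the exchange relation with a piecewise-linear analogue of the simple reflection $s_k$ on $\Phi_{\geq -1}$, and must verify that these match at every step of a chain of mutations from the initial seed.

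Granting the bijection, the forward direction is immediate: $|\Phi_{\geq -1}| = |\Phi_+| + n$ is finite for finite-type $C$, so there are finitely many cluster variables. For the converse, I would argue that if $C(\epsilon)$ is not of finite type, then either some rank-two subdiagram has $|\epsilon_{ij}\epsilon_{ji}| \geq 4$ and the recursion of Example \ref{ex:2alg} with $b \geq 2$ already yields infinitely many distinct $\vartheta_k$, or the underlying diagram contains an affine Dynkin subdiagram from which one can extract an explicit infinite mutation sequence whose outputs remain distinct (tracked via their $d$-vectors on a fixed rank-two sub-seed). Canonicality of the correspondence then requires showing that two matrices $\epsilon, \epsilon'$ with $C(\epsilon) = C(\epsilon') = C$ in finite type are mutation equivalent up to simultaneous relabeling, which can be reduced to a case analysis on rank-two subdiagrams of the Dynkin graph where any sign ambiguity in $\epsilon$ is resolved by a single mutation followed by a permutation.

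The hard part is the $d$-vector bijection itself. It demands not only the Laurent phenomenon but also a delicate piecewise-linear compatibility between mutation $\mu_k$ and the simple reflection action on $\Phi_{\geq -1}$; establishing this compatibility is the combinatorial heart of the argument in Fomin-Zelevinsky's original proof, and all the remaining finiteness assertions fall out once it is in place.
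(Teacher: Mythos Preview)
The paper does not supply its own proof of this statement: Theorem~\ref{thm:finite} is quoted verbatim from \cite[Theorem~1.4]{cluster2} as background, and the text moves on immediately to principal coefficients without argument. There is therefore nothing in the paper to compare your proposal against.

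For what it is worth, your outline is a faithful sketch of the original Fomin--Zelevinsky strategy in \cite{cluster2}: the denominator-vector bijection with almost positive roots $\Phi_{\ge -1}$, the piecewise-linear compatibility between mutation $\mu_k$ and the truncated reflection $\sigma_k$ on $\Phi_{\ge -1}$, and the rank-two obstruction for the infinite-type direction. One point where your converse would need tightening: a non-finite-type skew-symmetric $\epsilon$ need not contain a rank-two sub-seed with $|\epsilon_{ij}|\ge 2$ (e.g.\ affine type $\tilde A_n$ with single edges), so the ``either/or'' split as stated is not exhaustive without further work. In \cite{cluster2} this is handled by the $2$-finite characterisation (finite type $\Leftrightarrow$ every matrix mutation-equivalent to $\epsilon$ has all $|\epsilon'_{ij}\epsilon'_{ji}|\le 3$), which is itself a substantial combinatorial reduction and not something one can read off from subdiagrams of the initial $\epsilon$ alone.
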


Next we will introduce the cluster algebras with principal coefficients $\cAp$. It is a cluster algebras with $n$ extra variables $X_1, \dots, X_n$, i.e. $\calA \subset \Bbbk (A_1, \dots , A_\n, X_1, \dots , X_n)$. We will denote $A_{n+i}$ as $X_i$ for $i = 1, \dots, k$. The mutation would only be carried on the variables $A_i$, i.e. the first $n$ variables. The exchange matrix $\widetilde{\epsilon}$ for $\cAp$ would then be 
\[
\widetilde{\epsilon} = 
	\left(
	\begin{array}{c c}
	\epsilon & I \\ -I & 0
	\end{array}
	\right) ),
\]
where $I$ is the $n \times n$ identity matrix. 

For any cluster variables in the cluster algebras, Fomin-Zelevinsky showed the Laurent phenomenon for the cluster variables. The following statement is stated for $\calA$ and $\cAp$ but the Laurent phenomenon holds for more general cluster algebras.
\begin{theorem} \cite{cluster1} \cite[Theorem 3.3.1]{introcl123}
For a cluster algebra without frozen variables, or with principal coefficients, each cluster variable can be expressed as a Laurent polynomial with integer coefficients in the elements of any cluster. \end{theorem}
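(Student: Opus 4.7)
The plan is to fix an arbitrary target cluster $\seed^{(0)}=\{y_1,\dots,y_n\}$ with exchange matrix $\epsilon^{(0)}$, and to show that every cluster variable obtained from $(\seed^{(0)},\epsilon^{(0)})$ by any finite sequence of mutations lies in the Laurent polynomial ring $\Z[y_1^{\pm 1},\dots,y_n^{\pm 1}]$ (tensored, in the principal-coefficient case, with the polynomial ring in the frozen variables $X_1,\dots,X_n$). The argument proceeds by induction on the graph distance $d$ in the $n$-regular tree $\mathbb{T}_n$ of seeds from the vertex labeled by $\seed^{(0)}$ to the vertex where the given cluster variable first appears.

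For $d\le 1$ the claim is immediate from the exchange relation $A_kA_k'=\prod_{\epsilon_{kj}>0}A_j^{\epsilon_{kj}}+\prod_{\epsilon_{kj}<0}A_j^{-\epsilon_{kj}}$, which expresses the mutated variable as a Laurent polynomial in $\seed^{(0)}$ directly. For the inductive step the key input is the Caterpillar Lemma of Fomin--Zelevinsky: along any path $t_0\xrightarrow{\mu_i}t_1\xrightarrow{\mu_j}t_2\xrightarrow{\mu_i}t_3$ in $\mathbb{T}_n$, a rational function $F$ which is a Laurent polynomial in the cluster attached to each of $t_1,t_2,t_3$, with the extra constraint that its denominator at $t_1$ and $t_3$ involves only the variables indexed by $i$ and $j$, must already be a Laurent polynomial in the cluster attached to $t_0$. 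Applying this inductively along the unique path in $\mathbb{T}_n$ from $\seed^{(0)}$ to the seed of interest, one propagates Laurentness backwards one step at a time.

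The main obstacle is establishing the Caterpillar Lemma itself. The delicate point is a coprimality statement: after clearing denominators, one must check that the exchange binomial $P_{i}=\prod_j y_j^{[\epsilon_{ij}]_+}+\prod_j y_j^{[-\epsilon_{ij}]_+}$ and its analogue $P_i'$ attached to the seed $\mu_j(\seed^{(0)})$ share no common irreducible factor other than monomials in $y_i,y_j$. This coprimality is verified directly from the mutation rule for $\epsilon$, using that $\epsilon$ is skew-symmetric and that the two binomials differ by an explicit monomial factor depending on $\epsilon_{ij}$. Once coprimality is in hand, a unique factorization argument in $\Z[y_1^{\pm 1},\dots,y_n^{\pm 1}]$ forces the denominator of $F$ at $t_0$ to be a monomial, completing the Caterpillar Lemma and hence the induction. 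The principal-coefficient extension is essentially automatic: the frozen variables $X_1,\dots,X_n$ appear only as monomial coefficients in the exchange polynomials coming from $\widetilde{\epsilon}$ and play no role in the coprimality analysis, so the same argument applies with $\seed^{(0)}$ enlarged to $\seed^{(0)}\cup\{X_1,\dots,X_n\}$.
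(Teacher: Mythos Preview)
The paper does not give its own proof of this statement: it is quoted as a black box from Fomin--Zelevinsky \cite{cluster1} and the lecture notes \cite{introcl123}, with no argument supplied. So there is nothing in the paper to compare your proposal against directly.

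That said, your outline is essentially the original Fomin--Zelevinsky argument from \cite{cluster1}, so in spirit you are reproducing exactly the proof the paper defers to. One point of caution: your formulation of the Caterpillar Lemma is slightly garbled. The lemma does not assert that Laurentness at $t_1,t_2,t_3$ along a single path $t_0\to t_1\to t_2\to t_3$ implies Laurentness at $t_0$; rather, one fixes the first step $t_0\xrightarrow{\mu_i}t_1$ on the path toward the distant variable, and then must verify Laurentness at $t_1$ together with Laurentness at \emph{each} of the seeds $\mu_j(t_1)$ for $j\neq i$ (the ``legs'' of the caterpillar), plus the coprimality of the exchange polynomial $P_i$ at $t_0$ with each of the exchange polynomials $P_j$ at $t_1$. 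The induction then reduces distance by one. Your coprimality discussion is on the right track, but the actual check in \cite{cluster1} uses a three-term relation among the exchange polynomials rather than a direct comparison of $P_i$ and a single $P_i'$. If you intend to write this out in full, you should revisit Lemma~3.2 of \cite{cluster1} to get the geometry of the caterpillar right; as written, your inductive step would not close.
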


Thus, a cluster variable $\vartheta$ would be a Laurent polynomial in $A_1, \dots A_\n, X_1, \dots, X_\n$. 
We can define the $F$-polynomial by specifying all $A_i=1$, i.e. 
\begin{equation} \label{eq:F}
    F (X_1, \dots, X_\n) := \vartheta_i (A_1 = \cdots = A_\n =1). 
\end{equation}

In particular, by \cite{cluster4}, there is a factorization of the cluster variable $\vartheta$ in terms of the $F$-polynomial:
\begin{equation} \label{eq:gvector}
\theta = \prod_j A_j^{g_j} \cdot F\big( X_1\prod_j A_j^{\epsilon_{1,j}}, \dots ,X_n\prod_j A_j^{\epsilon_{n,j}}\big)|_{X_1 = \cdots = X_n =1}.
\end{equation}

We will call the vector $\textbf{g} = (g_1, \dots, g_n)$ as the $g$-vector. In \cite{cluster4}, one can mutate the $X$ variables and obtain similar set of vectors for the $X$ variables. One should refer to \cite{cluster4} for the details. We will define the set of $c$ vectors as the set of row vectors in the $(1, n) \times (n+1 , 2n) $ part of the matrix $\epsilon$, for all $\epsilon$ obtained from the mutation process. 

Now fix a seed $\seed$ and let $\epsilon$ be the corresponding exchange matrix in the seed. 
Let $C_\seed$ be the $n \times n$ matrix obtained by taking transpose of the $(1, n) \times (n+1 , 2n) $ of $\epsilon$. 
Let $\vartheta_1, \dots, \vartheta_n$ be the cluster variables in $\seed$ and let $\textbf{c}_1, \dots , \textbf{c}_n$ be the corresponding $c$-vectors. 

Combining the sign-coherence property of $c$-vector (each vector $\textbf{c}_i$ has neither all entries nonnegative or all entries nonpositive) proved in \cite{ghkk}, 
Nakanishi-Zelevinsky showed the following duality between the $C$ and $G$ matrix. We will state the theorem for the skew-symmetric case:
\begin{theorem} \cite[Theorem 1.2]{nakanishi2012tropical} \label{thm:tropduality}
\begin{equation} \label{eq:cgduality}
    G_\seed^T = C_\seed^{-1}, 
\end{equation}
\end{theorem}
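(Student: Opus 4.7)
The plan is to prove $G_\seed^T = C_\seed^{-1}$ by induction on the length of a mutation sequence from the initial seed $\seed_0$ to $\seed$, with the sign-coherence of $c$-vectors established in \cite{ghkk} serving as the key input that removes case analysis from the mutation formulas.

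For the base case, at $\seed = \seed_0$ the initial cluster variables $A_1, \dots, A_n$ have $g$-vectors equal to the standard basis vectors, so $G_{\seed_0} = I$. By construction of the extended exchange matrix $\widetilde\epsilon$ for $\cAp$, the initial $c$-vectors are also the standard basis vectors, giving $C_{\seed_0} = I$ and hence $G_{\seed_0}^T = I = C_{\seed_0}^{-1}$.

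For the inductive step, I would compute the action of a single mutation $\mu_k$ on each matrix separately. On the $C$-side, the mutation rule for $\widetilde\epsilon$ expresses the new $c$-vectors in terms of the old ones; the piecewise formula splits into two branches depending on the sign of the $k$-th $c$-vector, but sign-coherence guarantees that each $c$-vector is either entirely nonnegative or entirely nonpositive, so there is a well-defined sign $\varepsilon\in\{\pm 1\}$ and a matrix $J_k^\varepsilon$ with $C_{\mu_k\seed} = J_k^\varepsilon C_\seed$. On the $G$-side, one extracts the analogous recursion $G_{\mu_k\seed} = G_\seed K_k^\varepsilon$ from the factorization \eqref{eq:gvector}, using that $F$-polynomials have constant term $1$ so that tropically the $g$-vector updates by a piecewise-linear formula with the same sign alternative; sign-coherence of $g$-vectors (also obtained from the results of \cite{ghkk}) collapses the two branches into a single expression for $K_k^\varepsilon$. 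The heart of the argument is then the linear-algebra identity $(K_k^\varepsilon)^T = (J_k^\varepsilon)^{-1}$, from which the inductive step closes:
\[
G_{\mu_k\seed}^T = (K_k^\varepsilon)^T G_\seed^T = (J_k^\varepsilon)^{-1} C_\seed^{-1} = (J_k^\varepsilon C_\seed)^{-1} = C_{\mu_k\seed}^{-1}.
\]

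The main obstacle is the bookkeeping of signs: one needs explicit, consistent formulas for $J_k^\varepsilon$ and $K_k^\varepsilon$ in the present convention (which transposes the usual Fomin--Zelevinsky exchange matrix), and must verify the duality identity in both sign cases. Both matrices are identity away from the $k$-th row or column, so the verification is a concrete elementary computation; the subtlety is that sign-coherence is precisely what forces the two cases to collapse to the same duality, which is why it is indispensable to the argument and why the full theorem only became available after \cite{ghkk}.
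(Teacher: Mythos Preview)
The paper does not supply a proof of this statement: Theorem~\ref{thm:tropduality} is quoted from \cite{nakanishi2012tropical} and used as a black box, with the only contextual remark being that it relies on the sign-coherence of $c$-vectors established in \cite{ghkk}. So there is no ``paper's own proof'' to compare against.

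That said, your outline is essentially the Nakanishi--Zelevinsky argument itself and is sound. The induction on mutation length, the use of sign-coherence to collapse the piecewise-linear mutation rules for $C$ and $G$ into genuinely linear updates $C_{\mu_k\seed} = J_k^\varepsilon C_\seed$ and $G_{\mu_k\seed} = G_\seed K_k^\varepsilon$, and the verification of $(K_k^\varepsilon)^T = (J_k^\varepsilon)^{-1}$ are exactly the steps in their paper. One caution: the conventions here differ from Fomin--Zelevinsky by a transpose (as the paper notes just before Definition~\ref{def:cluster}), and $C_\seed$ is defined as the \emph{transpose} of the relevant block of $\widetilde\epsilon$, so when you write down $J_k^\varepsilon$ and $K_k^\varepsilon$ explicitly you must track carefully whether you are mutating rows or columns. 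A second minor point: sign-coherence of $c$-vectors alone suffices; you do not need to invoke sign-coherence of $g$-vectors separately, since the $g$-vector recursion is already governed by the sign of the $k$-th $c$-vector in the source seed.
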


Note that this theorem implies the set of column vectors in $ G_\seed$ and $ C_\seed$ formed a dual basis. Nakanish-Zelevinsky named the above duality as tropical duality.

\subsection{Quiver Representations}
\subsubsection{Introduction to quiver representations}

A \emph{quiver} $Q$ of rank $n$ is a directed graph of $n$ vertices. 
More precisely $Q = (Q_0, Q_1, s,t)$, where 
$Q_0$ is the set of vertices of $Q$, $Q_1$ is the set of arrows of $Q$, $s, t : Q_1 \raw Q_0$ two maps. For an arrow $\alpha \in Q_1$, $s (\alpha)$ is the starting point of $\alpha$ and $ t(\alpha)$ is the end point of $\alpha$. 
It can be written as $\alpha: s(\alpha) \raw t (\alpha)$.
Let us assume the vertices of $Q$ are numbered $\{ 1, \dots, n \}$.
We will mainly talk about acyclic quivers $Q$. Thus we can further assume $ a < b$ if there is an arrow goes from vertex $a$ to $b$.

Note that one can associate a cluster algebra to a given quiver $Q $ by considering the rank of the cluster algebra as the rank of $Q$ and defining the exchange matrix $\epsilon$ in the initial seed as
\begin{equation} \label{eq:quivermat}
    \epsilon_{ij} = \text{number of arrows from } j \text{ to } i -    \text{number of arrows from } i \text{ to } j.
\end{equation}

One can see that for the case of cluster algebra with principal coefficients, we are associating a vertex $i'$ (called the frozen vertex)
to each vertex $i$ of the original quiver and there is an arrow from $i$ to $i'$.  

We would then consider the representation of the opposite quiver associated to \eqref{eq:quivermat}.
The opposite quiver $Q^{\op}$ of $Q$ is defined by reserving all the vertices of $Q$ and the arrows in $Q^{\op}$ are in opposite direction of the arows in $Q$. 
Note that there is an equivalence between the category of representations of the opposite quiver $\rep(Q^{\text{op}})$ (defined below) and the path algebra of $Q$ (as right module). 
Thus we will consider the representation of $Q^{op}$.
For the sake of simplicity, we will still denote the opposite quiver $Q$ when we discuss about quiver representation. 
Just be aware that we are actually considering the opposite quivers of the quivers associated to \eqref{eq:quivermat}.

\begin{definition} 
A $\C$-linear representation $V = (V_i, V_{\alpha} ) _{i \in Q_0, \alpha \in Q_1}$ of $Q$ is defined by:
\begin{itemize}
\item To each point $i$ in $Q_0$ is associated a $\C$ vector space $V_i$
\item To each arrow $\alpha : i \raw j$ in $Q_1$ is associated a $\C$-linear map $V_{\alpha} : V_i \raw V_j$.
\end{itemize}
It is called \emph{finite dimensional} if each vector space $V_a$ is finite dimension. 
In this case, the \emph{dimension vector} of $M$ is defined to be the vector
\[ \dim (V) = ( \dim V_i)_{i \in Q_0}. \]
\end{definition}

A \emph{morphism} of representations $\phi : V \raw W$ is a collection $\phi  = (\phi_a) _{a \in Q_0}$ of linear maps $\phi_a : V_a \raw W_a$ for each vertex $i$ such that the following commutative diagram commutes:
\[
\begin{tikzcd}
V_{s (\alpha)} \arrow[r, "\phi_{s (\alpha)}"] \arrow[d, "V_{\alpha}"] & W_{s (\alpha)} \arrow[d, "W_{\alpha}"] \\
V_{t (\alpha)} \arrow[r, "\phi_{t (\alpha)}"] & W_{t (\alpha)}
\end{tikzcd}
\]
Let $f: V \raw V'$ and $g: V' \raw V''$ be two morphisms of representations of $Q$. Then their composition is defined as $(gf)_a = g_a \circ f_a$ for $a \in Q_0$. Then $gf: V \raw V''$ is also a morphism of representations. 
This defines a category $\Rep(Q)$ of representations of $Q$. We denote by $\rep(Q)$ the full subcategory of $\Rep (Q)$ consisting of the finite dimensional representations.

Let $V = (V_a, V_{\alpha})$ and $W = (W_a, W_{\alpha})$ be representation of $Q$. Then the \emph{direct sum} $V \oplus W$ is defined as
\[ ( V \oplus W)_a = V_a \oplus W_a \]
\[ ( V \oplus W)_{\alpha} =
\left( \begin{array}{c c}
V_{\alpha} & 0 \\ 0 & W_{\alpha}
\end{array} \right)
\]

A representation $V$ is called \emph{indecomposable} if $V \neq 0$ and if $V = S \oplus T$, then $S = 0$ or $T =0$. 

Let $N = \Z Q_0$ and $M = \Hom(N, \Z)$.
Define a bilinear form, $\chi ( \cdot, \cdot)$, the Euler form on $N$ as 
\begin{equation} \label{def:euler}
\chi (\textbf{c}, \textbf{d}) = \sum_{i \in Q_0} c_i d_i - \sum_{\alpha: i \raw j} c_i d_j
\end{equation}
for $\textbf{d} \in N$. 
We further define a map $\calE: N \raw M$ by
\begin{equation} \label{eqn:calE}
\calE(\textbf{d}) = \chi ( \cdot, \textbf{d}).
\end{equation}

\begin{example} \label{ex:2quiver}
Let us take the Kronecker 2-quiver $Q_2$ as an example.
\begin{center}
\begin{tikzpicture}
\node (A) at (-1,0) {1};
\node (B) at (1,0) {2};
\path[->]
(A) edge [bend left=30, "$\alpha_1$"]  (B)
(A) edge [bend right=30, "$\alpha_2$"']  (B);
\end{tikzpicture}
\end{center}

Then the set of indecomposable representations are of the form
\[
\begin{tikzcd}
\C^n \arrow[r, yshift=0.7ex, "f^p_1"] \arrow[r, yshift=-0.7ex, "f^p_2"']
& \C^{n+1}
\end{tikzcd}
,
\begin{tikzcd}
\C^k \arrow[r, yshift=0.7ex, "f^r_1"] \arrow[r, yshift=-0.7ex, "f^r_2"']
& \C^{k}
\end{tikzcd}
,
\begin{tikzcd}
\C^{n+1} \arrow[r, yshift=0.7ex, "f^i_1"] \arrow[r, yshift=-0.7ex, "f^i_2"']
& \C^{n}
\end{tikzcd},
\]
where
\begin{align*}
    f^p_1 :& (x_1, \dots, x_n ) \mapsto (x_1, \dots, x_n , 0), \\
    f^p_2 :& (x_1, \dots, x_n ) \mapsto (0, x_1, \dots, x_n ), \\
    f^r_1 &= \mu (1, \dots, 1), \quad \text{for some } \mu, \\
    f^r_2 &= \left(
	\begin{array}{c c c c c}
	1 & \lambda & 0 & \cdot & 0 \\
	0 & 1      & \lambda & 0 \cdots \\
	& & \cdots & & \\
	0 & \dots & 0& 1 &\lambda  \\
	0 & \dots & 0& 0 &1 \\
	\end{array}
	\right), \quad \text{for some } \lambda, \\
	f^i_1:& (x_1, \dots, x_n ) \mapsto (x_1, \dots, x_{n-1}), \\
	f^i_2:& (x_1, \dots, x_n ) \mapsto (x_2, \dots, x_n ),
\end{align*}
for $n \in \Z$, $k \in \Z_{\geq 1}$. Further
\begin{align*}
    \calE \left( (n, n+1)\right) & = (2-n, n+1),\\
    \calE \left( (k,k)\right) & = (-k, k),\\
    \calE \left( (n+1, n)\right) & = (1-n, n).
\end{align*}
\end{example}

\subsubsection{Simple, projective and injective representations}
In this section, we can are to give explicit descriptions of the simple, indecomposable projective, indecomposable injective representations. 

\begin{definition}
A \emph{simple} representation is defined to be a non zero representation with no proper subrepresentations.
\end{definition}

Given a vertex $i$, define $S(i)$ to be the representation
\[
S(i)_j = 
\left\{ \begin{array}{rcl}
\C & \mbox{if} & j=i \\ 
0 & \mbox{if} & j \neq i  
\end{array}
\right.
\]
and $S(i) _{\alpha} = 0$.
This is the simple representation associated to the vertex $i$.
The collections $\{ S(i) \}$ are the set of all simple representations of $Q$.

\begin{definition}
Let $i$ be a vertex of Q. 
\begin{enumerate}
	\item Define the \emph{projective representation} $P(i)$ at vertex $i$  as
\[P(i) = (P(i)_j, P(i)_{\alpha})_{j \in Q_0, \alpha \in Q_1} \]
where $P(i)_j$ is the $\C$-vector space with basis the set of all paths from $i$ to $j$ in $Q$. For $\alpha : j \raw l$, $P(i)_{\alpha} : P(i)_j \raw P(i)_l$ is the linear map defined on the basis by composing the paths from $i$ to $j$ with the arrow $\alpha: j \raw l$.
	\item Define the \emph{injective representation} $I(i)$ at vertex $i$  as
\[I(i) = (I(i)_j, I(i)_{\alpha})_{j \in Q_0, \alpha \in Q_1} \]
where $I(i)_j$ is the $\C$-vector space with basis the set of all paths from $j$ to $i$ in $Q$. For $\alpha : j \raw Q$, $I(i)_{\alpha} : I(i)_j \raw I(i)_l$ is the linear map defined on the basis by deleting the arrow $\alpha$ from those paths from $j$ to $i$ which start with $\alpha$ and sending to zero the path that do not start with $\alpha$.
\end{enumerate}
\end{definition}

\begin{example}
For $Q_2$ in Example \ref{ex:2quiver}:
\[
S(1) = 
\begin{tikzcd}
\C \arrow[r, yshift=0.7ex] \arrow[r, yshift=-0.7ex]
& 0
\end{tikzcd}
,
S(2) = 
\begin{tikzcd}
0 \arrow[r, yshift=0.7ex] \arrow[r, yshift=-0.7ex]
& \C
\end{tikzcd}
\]
\[
P(1) = 
\begin{tikzcd}
\C \arrow[r, yshift=0.7ex] \arrow[r, yshift=-0.7ex]
& \C^2
\end{tikzcd}
,
P(2) = 
\begin{tikzcd}
0 \arrow[r, yshift=0.7ex] \arrow[r, yshift=-0.7ex]
& \C
\end{tikzcd}
= S(2)
\]
\[
I(1) = 
\begin{tikzcd}
\C \arrow[r, yshift=0.7ex] \arrow[r, yshift=-0.7ex]
& 0
\end{tikzcd}
=S(1)
,
I(2) = 
\begin{tikzcd}
\C^2 \arrow[r, yshift=0.7ex] \arrow[r, yshift=-0.7ex]
& \C
\end{tikzcd}
\]

\end{example}

There is one handy property about the indecomposable injectives for calculation.

\begin{lemma} \label{thm:proj}
Let $V$ be a representation of $Q$. Then there are natural isomorphisms
\[ \Hom (P(i), V )\cong V_i, \qquad
\Hom (V, I(i) )\cong V_i^*, \]
where $V_i$ is the vector space associated to vertex $i$.
\end{lemma}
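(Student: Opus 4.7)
The plan is to build explicit maps in both directions for each isomorphism and verify they are mutually inverse; the statements then become reformulations of the universal properties of $P(i)$ and $I(i)$. The key observation is that $P(i)$ is freely generated as a representation by the trivial path $e_i \in P(i)_i$: every basis path of $P(i)_j$ is obtained from $e_i$ by applying the maps $P(i)_\alpha$ along a sequence of arrows. Dually, $I(i)_i$ is one-dimensional, spanned by the trivial path $e_i \in I(i)_i$, so any morphism into $I(i)$ is pinned down by its behaviour toward this distinguished element.

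For the first isomorphism I would define $\Phi : \Hom(P(i), V) \to V_i$ by $\Phi(\phi) = \phi_i(e_i)$. Its inverse $\Psi : V_i \to \Hom(P(i), V)$ sends $v \in V_i$ to the morphism whose $j$-th component maps a basis path $p = \alpha_k \cdots \alpha_1$ from $i$ to $j$ to $V_{\alpha_k} \cdots V_{\alpha_1}(v) \in V_j$. One checks that $\Psi(v)$ is a morphism of representations by verifying, for each arrow $\beta : j \raw l$, the compatibility square $V_\beta \circ \Psi(v)_j = \Psi(v)_l \circ P(i)_\beta$ on basis paths; this is immediate since prepending $\beta$ to a path corresponds to postcomposing with $V_\beta$. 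The identity $\Phi \circ \Psi = \operatorname{id}$ is then obvious, and for $\Psi \circ \Phi = \operatorname{id}$, the morphism property of any $\phi$ forces $\phi_j(\alpha_k \cdots \alpha_1) = V_{\alpha_k} \cdots V_{\alpha_1}(\phi_i(e_i))$, matching $\Psi(\Phi(\phi))_j$.

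For the second isomorphism I would use the canonical identification $I(i)_i \cong \C$ coming from the basis $\{e_i\}$, and define $\Phi' : \Hom(V, I(i)) \to V_i^*$ by $\Phi'(\phi) = \phi_i$, viewed as a functional on $V_i$. The inverse sends $f \in V_i^*$ to the morphism $\Psi'(f)$ whose $j$-th component is
\[
\Psi'(f)_j(v) \;=\; \sum_{p : j \to i} f\bigl(V_p(v)\bigr) \cdot p,
\]
where the sum runs over basis paths from $j$ to $i$ and $V_p$ denotes the composite linear map along $p$. Compatibility with an arrow $\beta : j \raw l$ follows by splitting the sum for $\Psi'(f)_j(v)$ according to whether a path from $j$ to $i$ begins with $\beta$: those that do contribute, after applying $I(i)_\beta$, exactly the terms $f(V_{q\beta}(v)) \cdot q$ indexed by paths $q : l \to i$, which is the expansion of $\Psi'(f)_l(V_\beta(v))$; the remaining paths are killed by $I(i)_\beta$. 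Both compositions $\Phi' \circ \Psi'$ and $\Psi' \circ \Phi'$ are the identity by reading off the coefficient of $e_i$ and by reconstructing the morphism from the data of $\phi_i$ combined with the morphism condition, respectively.

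The main bookkeeping obstacle is keeping the path-composition convention consistent with the deletion rule that defines $I(i)_\alpha$, which is what makes the injective side slightly less symmetric than the projective one. Naturality of both isomorphisms in $V$ follows transparently from the construction, since post-composing with a morphism $V \to W$ of representations commutes with the componentwise operations defining $\Phi, \Psi, \Phi', \Psi'$.
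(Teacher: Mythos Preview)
Your argument is correct. The paper itself does not supply a proof of this lemma; it is stated as a standard fact and used immediately afterward to compute the Euler form from a projective resolution. Your explicit construction of the mutually inverse maps is the standard one, and the verification of the compatibility squares is accurate (in particular, your treatment of the injective side correctly uses acyclicity of $Q$ to ensure $I(i)_i$ is one-dimensional, and your splitting of the sum over paths according to the first arrow matches the deletion rule defining $I(i)_\beta$). There is nothing to compare against in the paper beyond noting that the authors take the result as known.
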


Next we will consider the projective resolution of a quiver representation. 

\begin{definition}
Let $V$ be a representation of $Q$.
A \emph{projective resolution} of $V$ is an exact sequence
\[
\cdots \raw P_3 \raw P_2 \raw P_1 \raw P_0 \raw V \raw 0,
\]
A \emph{injective resolution} of $V$ is an exact sequence
\[
0 \raw V \raw I_0 \raw I_1 \raw I_2 \raw I_3 \raw \cdots,
\]
where each $I_i$ is an injective representation.
\end{definition}

In particular, as we are considering acyclic quivers, the corresponding path algebra is hereditary. In particular we have the following theorem:

\begin{theorem} \cite[Theorem 2.15]{schiffler_book}
Let $C$ be a representation of $Q$.
There exists a projective resolution of $D$ of the form
\[ 0 \longrightarrow P_1 \longrightarrow P_0 \longrightarrow C \longrightarrow 0 ,\]
where $P_1 = \bigoplus_{\alpha \in Q_1} (\dim C_{s(\alpha)}) P(t (\alpha))$ and 
$P_0 = \bigoplus_{i \in Q_0} (\dim (C_i)) P(i)$. Similarly, there exists an injective resolution of $D$ of the form
\[
0 \longrightarrow D \longrightarrow I_0 \longrightarrow I_1 \longrightarrow 0, 
\]
where $I_0 = \bigoplus_{i \in Q_0} (\dim (D_i)) I(i)$ and $I_1 = \bigoplus_{\alpha \in Q_1} (\dim D_{t(\alpha)}) I(s (\alpha))$.
\end{theorem}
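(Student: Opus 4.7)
\textit{Proof plan.} The plan is to write down the projective resolution explicitly from the path-basis description of $P(i)$ and the natural identification $\Hom(P(i),V)\cong V_i$ of Lemma \ref{thm:proj}, verify exactness by a dimension count at each vertex, and then obtain the injective resolution by a duality argument on $Q^{\op}$.

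I would first fix a basis $\{e_{i,\mu}\}_{\mu=1}^{\dim C_i}$ of each $C_i$. By Lemma \ref{thm:proj} each $e_{i,\mu}$ determines a morphism $P(i)\raw C$ sending the trivial path at $i$ to $e_{i,\mu}$, hence sending a general path $p:i\raw k$ to $C_p(e_{i,\mu})\in C_k$, where $C_p$ is the composition of the structure maps $C_\alpha$ along the arrows of $p$. Assembling these defines $\pi:P_0\raw C$, and surjectivity is immediate since the trivial path at $k$ already hits every basis vector $e_{k,\mu}$. The differential $d:P_1\raw P_0$ is defined on the summand indexed by $\alpha:j\raw l$ and $e_{j,\mu}$ by
\[
d(q\otimes e_{j,\mu}) \;=\; (\alpha\cdot q)\otimes e_{j,\mu} \;-\; q\otimes C_\alpha(e_{j,\mu}),
\]
where $q$ is any path starting at $l$; the first term lies in the $P(j)$-summand of $P_0$, and the second, after expanding $C_\alpha(e_{j,\mu})$ in the basis of $C_l$, lies in the $P(l)$-summands. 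The identity $\pi\circ d=0$ is immediate since both terms produce $C_{\alpha\cdot q}(e_{j,\mu})$ in $C$.

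The core technical step, which will be the main obstacle, is the simultaneous verification of injectivity of $d$ and exactness in the middle. For injectivity, the acyclicity of $Q$ is essential: if $\xi=\sum c_{\alpha,\mu,q}\,q\otimes e_{j_\alpha,\mu}$ lies in $\ker d$, then the component of $d(\xi)$ in the $P(j)$-summand of $P_0$ (for a fixed source vertex $j$) is $\sum_{\alpha:j\raw l}\sum_{q,\mu} c_{\alpha,\mu,q}(\alpha\cdot q)\otimes e_{j,\mu}$; since every nontrivial path $p:j\raw k$ factors uniquely as $p=\alpha\cdot q$, the coefficient of each basis vector $p\otimes e_{j,\mu}$ is a single scalar $c_{\alpha,\mu,q}$, which must therefore vanish. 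Combined with $\pi\circ d=0$, this embeds $P_1$ into $\ker\pi$, so exactness at $P_0$ reduces to matching dimensions at each vertex $k$; since $\pi$ is surjective, $\dim(\ker\pi)_k=\dim(P_0)_k-\dim C_k$, and the claim reduces to the combinatorial identity
\[
\sum_i \dim C_i\cdot \#\{p:i\raw k\} \;-\; \sum_{\alpha:j\raw l}\dim C_j\cdot \#\{q:l\raw k\} \;=\; \dim C_k,
\]
which telescopes via the unique-first-arrow decomposition $\#\{p:i\raw k\}=\delta_{i,k}+\sum_{\alpha:i\raw l}\#\{q:l\raw k\}$.

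For the injective resolution I would invoke duality: the contravariant functor $V\mapsto V^{*}$ combined with reversing the arrows of $Q$ gives a duality between $\rep(Q)$ and $\rep(Q^{\op})$ under which $I_Q(i)$ corresponds to $P_{Q^{\op}}(i)$. Applying the projective resolution above to $D^{*}\in \rep(Q^{\op})$ and dualizing back produces a sequence $0\raw D\raw I_0\raw I_1\raw 0$; since an arrow $\alpha:i\raw j$ of $Q$ becomes $j\raw i$ in $Q^{\op}$, the projective summand formula $\bigoplus_\alpha P(t(\alpha))^{\dim C_{s(\alpha)}}$ translates to the injective summand formula $\bigoplus_\alpha I(s(\alpha))^{\dim D_{t(\alpha)}}$, matching the statement.
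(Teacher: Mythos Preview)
The paper does not prove this statement; it is quoted verbatim from Schiffler's textbook and used as a black box, so there is no ``paper's own proof'' to compare against. I will therefore just comment on the soundness of your argument.

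Your construction of $\pi$ and $d$, the verification $\pi\circ d=0$, the surjectivity of $\pi$, the vertex-by-vertex dimension identity, and the duality argument for the injective resolution are all correct and standard. There is, however, a genuine gap in your injectivity argument for $d$. You claim that the component of $d(\xi)$ in the $P(j)$-summand of $P_0$ is $\sum_{\alpha:j\to l}\sum_{q,\mu} c_{\alpha,\mu,q}(\alpha\cdot q)\otimes e_{j,\mu}$, i.e.\ only first-type terms contribute. This is false: for any arrow $\beta:j'\to j$, the second-type term $-\,q\otimes C_\beta(e_{j',\mu})$ lands, after expanding $C_\beta(e_{j',\mu})$ in the basis of $C_j$, in the $P(j)$-summands as well. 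So the coefficient of a basis path $p\otimes e_{j,\nu}$ in $d(\xi)$ is in general a linear combination of several $c$'s, not a single one.

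The fix is short and uses acyclicity in the way you intend. Either argue by induction on a topological ordering of $Q_0$ (choose $j_0$ minimal among vertices with some nonzero $c_{\alpha,\mu,q}$, $s(\alpha)=j_0$; then there are no arrows into $j_0$ from vertices carrying nonzero coefficients, so second-type contributions vanish and your computation goes through for $j_0$), or filter $P_0$ and $P_1$ by path length and observe that the first term of $d$ raises length by one while the second preserves it; on the associated graded the map is exactly your ``first-term only'' map, which is injective by unique first-arrow factorization, and since the filtration is finite this forces $d$ itself to be injective. Once injectivity is secured, your dimension count closes the exactness at $P_0$ as written.
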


\begin{remark} \label{rk:inj}
Recall in \eqref{eqn:calE}, we defined $\calE(\textbf{d}) = \chi ( \cdot, \textbf{d})$. Note that $\calE$ is expressing the above injective resolution, which is, 
\begin{equation} \label{eq:gg}
    \calE(\textbf{d})_i = \dim (D_i)- \sum_{\alpha: i \rightarrow j}\dim D_j
\end{equation}
\end{remark}

Let $D$ be any representation of $Q$. Applying $\Hom ( \cdot, D)$ to the above projective resolution and using Lemma \ref{thm:proj}, 
we get the following exact sequence
\[
	0 \raw \Hom (C, D) \raw \Hom (P_0, D) \raw \Hom(P_1, D) \raw \Ext^1 (C, D) \raw 0.
\]
By using Lemma \ref{thm:proj}, we have
\begin{equation}
\chi (C, D) = \dim  \Hom (C, D) - \dim \Ext^1 (C, D) .
\end{equation}

For the definition of Nakayama functor in the next section, let us define \emph{minimal projective resolution}.
\begin{definition}
Let $M \in \rep Q$. A \emph{projective cover} of $M$ is a projective representation $P$ together with a surjective morphism $g: P \raw M$ with the property that, whenever $g': P' \raw M$ is a surjective morphism with $P'$ projective, then there exists a surjective morphism $h: P' \twoheadrightarrow P$ such that the diagram commutes, i.e. $gh = g'$.
\[\begin{tikzcd}
				& P' \arrow[dl, "h"] \arrow[d, "g'"] & \\
 P \arrow[r, "g"] & M  \arrow[r] \arrow[d] &  0   \\
             	& 0 &  
\end{tikzcd}\]

\end{definition}
Then, we have the following
\begin{definition}
A projective resolution 
\[ \cdots \raw P_3 \xrightarrow{f_3} P_2 \xrightarrow{f_2} P_1 \xrightarrow{f_1} P_0 \xrightarrow{f_0} M \rightarrow  0 \]
is called \emph{minimal} if $f_0: P_0 \raw M$ is a projective cover and $f_i: P_i \raw \ker f_{i-1}$ is a projective cover for every $i >0$.
\end{definition}

\subsection{Auslander-Reiten theory} \label{sec:arquiver}

\subsubsection{Auslander-Reiten translation}
Let $Q^{op}$ be the quiver obtained from $Q$ by reversing each arrow. 
Define
\[
	D = \Hom_{\C} ( - , \C) : \rep Q \longrightarrow \rep Q^{op}
\]

Let $P = \bigoplus_{i \in Q_0} P(i)$. 
We have that $\Hom ( V, P)$ is a representation $(H_i, \phi_{\alpha^{op}})$ of $Q^{op}$ for $V$ a representation of $Q$,
where $H_i = \Hom (V, P(i))$ for every $i \in Q_0$ and
 for $\alpha: i \raw j$ in $Q$,
 define a morphism $\alpha: P(j) \raw P(i)$ by $\alpha_{l} : P(j)_l \raw P(i)_l$ by composing $\alpha$ with the paths from $j$ to $l$. 
 Then define 
$\phi_{\alpha^{op}}: H_j \raw H_i$ as $\phi_{\alpha^{op}} (f) = \alpha \circ f$. That is we have the commutative diagram
\[
\begin{tikzcd}
V \arrow[r, "f"] \arrow[dr, "\phi_{\alpha^{op}} (f)"'] & P(j) \arrow[d, "\alpha"] \\
 & P(i)
\end{tikzcd}
\]

Then we define the \emph{Nakayama functor} as
\[
	\nu = D \Hom (-, P) : \rep Q \longrightarrow \rep Q.
\]

\begin{example}
Consider $Q_2$ again. We take 
$P(2) = 
0 \rraws \C
$.
Then $\Hom (P(2), P)$ gives us 
$\C^2 \leftleftarrows \C $.
Taking $D$ will give us 
$ \C^2 \rraws \C $,
which is $I(1)$. Notice that this example also suggests that $\nu$ maps projectives to injectives which is true in general.
\end{example}

Now we are ready to define Auslander-Reiten translation.

\begin{definition}
Let
\[ 0 \xrightarrow{p_1} P_1 \xrightarrow{p_0} P_0 \rightarrow V \raw 0\]
be a minimal projective resolution. Applying the Nakayama functor, we get an exact sequence
\[
	0 \raw \tau V \raw \nu P_1 \xrightarrow{ \nu p_1} \nu P_0 \xrightarrow{\nu p_0} \nu V \raw 0,
\]
where $\tau V = \ker \nu p_1$ is called the Auslander-Reiten translate of $M$ and $\tau$ is the \emph{Auslander-Reiten translation}.
\end{definition}

\begin{example} \label{ex:2quiver_nakayama}
Let us calculate $\tau (\C^2 \rightrightarrows \C^3)$ where $\C^2 \rightrightarrows \C^3$ is the indecomposable representation. First we have the minimal projective resolution
\[ 0 \xrightarrow{p_1}  (0 \rraws \C) \xrightarrow{p_0} (\C \rraws \C^2 )^2
\rightarrow (\C^2 \rightrightarrows \C^3) \raw 0.\]
Applying the Nakayama functor will give us
\[
	0 \raw \tau (\C^2 \rightrightarrows \C^3) \raw (\C^2 \rraws \C)
	 \xrightarrow{ \nu p_1} (\C \rraws 0)^2
	  \xrightarrow{\nu p_0} 0 \raw 0.
\]
Thus $\tau (\C^2 \rightrightarrows \C^3) = 0 \rraws \C$. 
\end{example}

There is a fundamental result called the Asulander-Reiten Formula to relate short exact sequences and morphisms in the module category. This result also give us a way to compute $Ext^1$ which is heavily used in this article. 

\begin{theorem} \cite[Theorem 7.18, Auslander-Reiten formulas]{schiffler_book} \label{thm:ARformula}
Let $V, W$ be $\C Q$-modules. 
We define
\[
	 \underline{\Hom} (V,W ) = \Hom (V,W)/ P(V,W),
	 \]
\[	  \overline{\Hom} (V,W ) = \Hom (V,W)/ I(V,W),
\]
where $P(V, W)$ ($I(V,W)$) is the set of morphisms $f \in \Hom (V,W)$ such that $f$ factors through a projective (injective) $\C Q$-module. 

Then there are isomorphisms
\[ 
	\Ext^1 (V, W) \cong D \underline{\Hom} (\tau^{-1}W, V) \cong D \overline{\Hom} (W, \tau V). 
\]
\end{theorem}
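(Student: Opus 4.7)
The plan is to prove $\Ext^1(V,W)\cong D\overline{\Hom}(W,\tau V)$ using the minimal projective resolution of $V$ together with a Hom-Nakayama duality; the first isomorphism then follows from the mirror argument that swaps the roles of $V$ and $W$, projectives and injectives, and $\tau$ and $\tau^{-1}$. Since $\C Q$ is hereditary for $Q$ acyclic, the minimal projective resolution of $V$ has length one,
\[
0 \longrightarrow P_1 \xrightarrow{p_1} P_0 \longrightarrow V \longrightarrow 0,
\]
and applying $\Hom(-,W)$ exhibits $\Ext^1(V,W)$ as the cokernel of $p_1^*\colon\Hom(P_0,W)\to\Hom(P_1,W)$.

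The key input is the natural duality $\Hom(P,W)\cong D\Hom(W,\nu P)$ for projective $P$ and finite-dimensional $W$, which I would establish by reducing to indecomposables: for $P=P(i)$, both sides equal $W_i$, using Lemma~\ref{thm:proj} and the dual statement $\Hom(W,I(i))\cong W_i^*$ together with $\nu P(i)=I(i)$. Applying $\nu$ to the projective resolution gives, by the definition $\tau V=\ker(\nu p_1)$, the exact sequence $0 \to \tau V \to \nu P_1 \xrightarrow{\nu p_1} \nu P_0$. Applying the left-exact functor $\Hom(W,-)$ then identifies $\Hom(W,\tau V)$ with $\ker((\nu p_1)_*)$. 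Since $D$ converts kernels into cokernels, combining these steps naturally yields
\[
\Ext^1(V,W) \;=\; \operatorname{coker}(p_1^*) \;\cong\; D\,\ker((\nu p_1)_*) \;\cong\; D\Hom(W,\tau V).
\]

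The main obstacle is refining this $D\Hom$ to $D\overline{\Hom}$: I must verify that any map $f\colon W\to\tau V$ factoring through an injective module contributes zero under the identification with $\Ext^1(V,W)$. Here the minimality of the projective resolution is essential. Since $p_1$ has image in the radical of $P_0$, its Nakayama image $\nu p_1$ cannot split off an injective summand of $\nu P_1$, so any injective $J$ through which $f$ factors must embed into $\nu P_1$ in a way that lies in the image of $(\nu p_1)_*$ via the lifting property of injectives, and hence $f$ gives zero in $\operatorname{coker}(p_1^*)$. Checking that this descends to a genuine isomorphism with $D\overline{\Hom}(W,\tau V)$ is the technical heart of the proof. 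The first isomorphism is then obtained by the mirror argument: start with the minimal injective resolution of $W$, apply $\Hom(V,-)$, invoke the dual duality $\Hom(V,I)\cong D\Hom(\nu^{-1}I,V)$ for injective $I$, and use the definition of $\tau^{-1}W$ as the cokernel of the Nakayama-pulled resolution, with the minimality of the injective resolution yielding the passage from $D\Hom$ to $D\underline{\Hom}(\tau^{-1}W,V)$.
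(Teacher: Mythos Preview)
The paper does not prove this statement: it is quoted as \cite[Theorem 7.18]{schiffler_book} and used as a black box, with no proof given. So there is nothing in the paper to compare against.

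On its own merits, your outline is the standard hereditary-case argument and is correct in substance, but your treatment of the passage from $D\Hom$ to $D\overline{\Hom}$ is more laboured than necessary and slightly misdiagnoses where the work lies. Your chain of isomorphisms already yields $\Ext^1(V,W)\cong D\Hom(W,\tau V)$ cleanly, and over the hereditary algebra $\C Q$ this \emph{equals} $D\overline{\Hom}(W,\tau V)$ for a transparent reason: assuming (as one may, after splitting off projective summands) that $V$ has no projective summand, $\tau V$ has no injective summand; any map $I\to\tau V$ from an injective $I$ has injective image since quotients of injectives are injective over a hereditary algebra; an injective submodule is automatically a direct summand; hence the image is zero and $I(W,\tau V)=0$. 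The bar is therefore vacuous here. Your appeal to minimality, radical images, and lifting along $\nu p_1$ is not needed for this step; minimality of the resolution matters only to ensure $\tau V$ is computed without spurious injective summands. The dual remark handles the first isomorphism.
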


\subsubsection{Almost split sequence}
There is a canonical extension between a module and its Auslander-Reiten translate. 
We have a notion of almost split sequence to describe the short exact sequence.

\begin{definition}
A morphism $f: L \raw E$ is called \emph{left minimal almost split} if
\begin{enumerate}
	\item $f$ is not a section, i.e. there is no morphism $h: E \raw L$ such that $hf = id_{L}$;
	\item for each morphism $u: L \raw U$ in $\modu \C Q$ which is not a section, there exists a morphism $u': E \raw U $ such that $u' f =u$;
	\item if $h : E \raw E$ is such that $hf =f$ then $h$ is an automorphism of $E$.
\end{enumerate}
Similarly, a morphism $g: E \raw F$ is called \emph{right minimal almost split} if
\begin{enumerate}
	\item $g$ is not a retraction, i.e. there is no morphism $h: F \raw E$ such that $gh = id_{F}$;
	\item for each morphism $v: V \raw F$ in $\modu \C Q$ which is not a retraction, there exists a morphism $v': V\raw E $ such that $g v' = v$;
	\item if $h : E \raw E$ is such that $g h = g$ then $h$ is an automorphism of $E$.
\end{enumerate}
\end{definition}

Then we can define almost split sequence

\begin{definition}
A short exact sequence in $\modu \C Q$ 
\[ 0 \longrightarrow L \xrightarrow{f} M \xrightarrow{g} N \longrightarrow 0 \]
is an \emph{almost split sequence} if $f$ is a left minimal almost split morphism and $g$ is a right minimal almost split morphism.
\end{definition}
Let $M =\oplus M_i$, where $M_i$ are indecomposable. Then there exists $M_i$ such that both $L \raw M_i = M/ (\oplus_{j \neq i} M_i)$ and $M_i \xrightarrow{g|_{M_i}} N$ are not zero. If not, we can decompose $M = A \oplus B$, where $A = \im f$ and $B = \ker g$. Then the sequence would be split which contradicts to the definition of almost split exact sequence.

The following theorem gives us the existence of almost split sequences. 
\begin{theorem} \label{thm:almostsplit} \cite[Theorem 7.26]{schiffler_book}
For any finite quiver $Q$ without oriented cycle, there exists a bijection $\tau$ from the indecomposable non-projective representations to indecomposable non-injective representations such that for each non-projective indecomposable $V$, there exists an almost split sequence
\[ 0 \raw \tau V \raw E \raw V  \raw 0 ,\]
where $\tau$ is the Auslander-Reiten translation.
\end{theorem}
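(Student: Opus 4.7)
The plan is to construct $\tau V$ explicitly from a minimal projective resolution, exhibit a canonical class $\delta \in \Ext^1(V, \tau V)$ via the Auslander--Reiten formula, verify that the resulting extension is almost split, and finally produce an inverse to $\tau$.

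For an indecomposable non-projective $V$, I take the minimal projective resolution $0 \raw P_1 \xrightarrow{p_1} P_0 \raw V \raw 0$ (of length two since $\C Q$ is hereditary) and set $\tau V = \ker \nu p_1$. Because $V$ is non-projective, $P_1 \neq 0$; since $\nu$ restricts to an equivalence from projectives to injectives and the resolution is minimal, the image sequence $0 \raw \tau V \raw \nu P_1 \raw \nu P_0 \raw \nu V \raw 0$ cannot split off an injective summand onto $\tau V$. Locality of $\End(V)$ together with Krull--Schmidt then forces $\tau V$ to be indecomposable non-injective.

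I then apply Theorem \ref{thm:ARformula} with $W = \tau V$ to obtain $\Ext^1(V, \tau V) \cong D\underline{\End}(V)$. Since $V$ is indecomposable and non-projective, $\underline{\End}(V) = \End(V)/P(V,V)$ is a nonzero local finite-dimensional $\C$-algebra with residue field $\C$, so the canonical surjection $\pi \colon \underline{\End}(V) \twoheadrightarrow \C$ is a distinguished nonzero element of $D\underline{\End}(V)$; let $\delta \in \Ext^1(V, \tau V)$ be the corresponding class, realized by a non-split short exact sequence
\[ 0 \raw \tau V \xrightarrow{f} E \xrightarrow{g} V \raw 0. \]
To see $g$ is right almost split, take $u \colon U \raw V$ not a retraction. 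By naturality of the AR formula, $u^*\delta \in \Ext^1(U, \tau V) \cong D\underline{\Hom}(V, U)$ corresponds to the linear form $h \mapsto \pi(u \circ h)$ on $\underline{\Hom}(V, U)$. If $u \circ h$ were outside $\ker \pi = \mathrm{rad}(\underline{\End}(V))$ for some $h$, then locality of $\underline{\End}(V)$ would give $u \circ h$ invertible there; unpacking this modulo maps factoring through projectives (and using indecomposability and non-projectivity of $V$ to see that such maps lie in $\mathrm{rad}(\End V)$) yields a splitting $s\colon V \to U$ of $u$, contradicting that $u$ is not a retraction. Hence $u^*\delta = 0$ and $u$ lifts through $g$. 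The minimality condition on $g$ follows because any $h \colon E \raw E$ with $gh = g$ descends to an endomorphism of $V$ congruent to the identity modulo $\mathrm{rad}(\End V)$, hence an automorphism, and a snake-lemma argument promotes $h$ itself to an automorphism. The symmetric argument using the other form of the AR formula applied to the same sequence shows that $f$ is left almost split.

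Finally, for the bijection, I construct $\tau^{-1}$ symmetrically via a minimal injective resolution and the inverse Nakayama functor $\nu^{-1}$; the same arguments exhibit $\tau^{-1} W$ as indecomposable non-projective for indecomposable non-injective $W$, and the identities $\tau \tau^{-1} \cong \mathrm{id}$, $\tau^{-1} \tau \cong \mathrm{id}$ follow from $\nu$ and $\nu^{-1}$ being quasi-inverse equivalences between projectives and injectives together with uniqueness of minimal resolutions. The main obstacle in this plan is the almost-split verification: one must carefully track the naturality of the AR formula under pullback and translate the algebraic condition that $u$ is not a retraction into the vanishing of $\pi(u \circ -)$; this is the technical heart of the argument and is where indecomposability of $V$ (through locality of $\End V$) enters essentially.
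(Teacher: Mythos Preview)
The paper does not prove this theorem; it is quoted verbatim from \cite[Theorem 7.26]{schiffler_book} and used as a black box, so there is no proof in the paper to compare against.

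Your outline is the standard argument (essentially the one in Schiffler or in Auslander--Reiten--Smal{\o}): build $\tau V$ via the Nakayama functor applied to a minimal projective resolution, identify the almost split extension as the image of the canonical socle element of $D\underline{\End}(V)$ under the Auslander--Reiten isomorphism, and deduce the right-almost-split lifting property from locality of $\underline{\End}(V)$. The main line is correct. Two places are compressed more than they should be. First, indecomposability of $\tau V$: your remark that the sequence ``cannot split off an injective summand onto $\tau V$'' does not by itself give locality of $\End(\tau V)$; the clean argument is to show $\overline{\End}(\tau V) \cong \underline{\End}(V)$ (this drops out of the AR formula or from functoriality of $\tau$ on the stable category) and then invoke locality of the latter. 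Second, the left-almost-split property of $f$: invoking ``the symmetric argument'' is not automatic, because you must check that the \emph{same} class $\delta$ corresponds, under the other AR isomorphism $\Ext^1(V,\tau V)\cong D\,\overline{\Hom}(\tau V,\tau V)$, to the analogous canonical functional on $\overline{\End}(\tau V)$; this compatibility is true but needs the naturality of the AR pairing in both variables, not just one. With those two points filled in, your proof is complete.
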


\begin{example}
Continuing our calculation in Example \ref{ex:2quiver_nakayama}, we have the almost split sequence
\[
0 \longrightarrow (0 \rraws \C) \longrightarrow (\C \rraws \C^2)^2 \longrightarrow (\C^2 \rraws \C^3) \longrightarrow 0.
\]
\end{example}

\subsubsection{Auslander-Reiten quiver}
Let $V$ and $W$ be indecomposable modules in $\modu \C Q$. 
A homomorphism $f: V \raw W$ is in $\modu \C Q$ is \emph{irreducible} if $f$ is neither a section nor a retraction; and if $f = gh$ for some morphism $h: V \raw Z$, $g: Z \raw W$, then either $h$ is a section or $g$ is a retraction.

Define $\Irr (V,W)$ as the set of irreducible morphisms from $V$ to $W$. It is called the space of \emph{irreducible morphisms}. 
\begin{definition} \cite[Definition IV 4.6]{eltofrep1} \label{def:arquiver}
Consider the path algebra $\C Q$, where $Q$ is a finite acyclic quiver. The quiver $\Gamma (\modu \C Q)$ is defined as:
\begin{itemize}
	\item The points of $\Gamma (\modu \C Q)$ are the isomorphism classes $[V]$ of indecomposable modules $V$ in $\modu \C Q$.
	\item Let $[V]$, $[W]$ be the points in $\Gamma (\modu \C Q)$ corresponding to the indecomposable modules $V$, $W$ in $\modu \C Q$. The arrows $[V] \raw [W]$ are in bijective correspondence with the vectors of a basis of the $K$-vector space $Irr (V, W)$.
\end{itemize}
The quiver $\Gamma (\modu A)$ of the module category $\modu \C Q$ is called the \emph{Auslander-Reiten quiver} of $\C Q$. We will write AR quiver for shorthanded notation. 
\end{definition}

\begin{prop} \cite[Proposition VIII 2.1]{eltofrep1} \label{thm:AR_component}
The Auslander-Reiten quiver $\Gamma (\modu \C Q)$ of $\C Q$ contains a connected component $\calP (\C Q)$ where
\begin{itemize}
	\item for every indecomposable $ \C Q$-module $V$ in $\calP ( \C Q)$, there exist a unique $t \geq 0$ and a unique $a \in Q_0$ such that $V \cong \tau ^{-t} P(a)$.
	\item $\calP ( \C Q)$ contains a subquiver consisting of all the indecomposable projective $ \C Q$-modules; and
	\item $\calP ( \C Q)$ is acyclic.
\end{itemize}
$\Gamma (\modu \C Q)$ also contains a connected component $ \calI (\C Q)$, where
\begin{itemize}
	\item for every indecomposable $ \C Q$-module $W$ in $\calI ( \C Q)$, there exist a unique $s \geq 0$ and a unique $a \in Q_0$ such that $W \cong \tau^s I(a)$.
	\item $\calI ( \C Q)$ contains a subquiver consisting of all the indecomposable injective $ \C Q$-modules; and
	\item $\calI ( \C Q)$ is acyclic.
\end{itemize}
Furthermore, $\calP ( \C Q)= \calI ( \C Q)$ if and only if $Q$ is of Dynkin type.
\end{prop}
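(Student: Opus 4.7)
The plan is to build $\calP(\C Q)$ as the $\tau^{-1}$-orbit of the indecomposable projectives and $\calI(\C Q)$ as the $\tau$-orbit of the indecomposable injectives, then verify the component structure, and finally pin down when the two components coincide.

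First I would invoke Theorem \ref{thm:almostsplit}: $\tau$ is a bijection between indecomposable non-projectives and indecomposable non-injectives, so $\tau^{-1}$ is well defined on indecomposable non-injectives. Define $\calP(\C Q)$ as the full subquiver of $\Gamma(\modu \C Q)$ spanned by the isomorphism classes $\tau^{-t}P(a)$ for $a \in Q_0$ and those $t \ge 0$ for which the iterate is defined. Uniqueness of the pair $(t,a)$ follows by applying $\tau^{\min(t,s)}$ to a hypothetical isomorphism $\tau^{-t}P(a)\cong \tau^{-s}P(b)$; if $s>t$ this would place $P(a)$ in the image of $\tau^{-1}$, which consists of non-projectives, a contradiction. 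Hence $t=s$ and $P(a)\cong P(b)$, so $a=b$.

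Next I would show $\calP(\C Q)$ is a single connected component. Arrows incident to a non-projective indecomposable $V$ are read off from the almost split sequence $0\to \tau V\to E\to V\to 0$: the indecomposable summands $E_i$ of $E$ account (up to basis) for all irreducible morphisms $\tau V\to E_i$ and $E_i\to V$. The standard knitting procedure then expresses the summands of $E$ again as iterates $\tau^{-t'}P(a')$, so $\calP(\C Q)$ is closed under taking neighbors. Since each vertex $\tau^{-t}P(a)$ is connected by this chain of meshes back to $P(a)$, and the $P(a)$ are mutually linked via the irreducible morphisms coming from the arrows of $Q$, $\calP(\C Q)$ is connected and contains every indecomposable projective. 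Acyclicity is enforced by the level $t$: an oriented cycle would contradict the uniqueness above, once one shows the $\tau$-grading is strictly monotone along any oriented path (the knitting construction places arrows only between consecutive levels). The statements for $\calI(\C Q)$ are obtained by dualizing via $D=\Hom_{\C}(-,\C):\rep Q\to \rep Q^{\op}$, which swaps projectives and injectives and exchanges $\tau$ with $\tau^{-1}$.

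Finally, for the dichotomy $\calP(\C Q)=\calI(\C Q)\iff Q$ is Dynkin, I would argue: if $Q$ is Dynkin, Gabriel's theorem provides only finitely many indecomposables, so the $\tau^{-1}$-orbit of each $P(a)$ must terminate at some injective $I(b)$ (otherwise it would produce infinitely many non-isomorphic modules), giving $\calP(\C Q)\subseteq \calI(\C Q)$; the reverse containment is symmetric. Conversely, if $Q$ is not Dynkin, there are infinitely many positive roots of the associated Kac--Moody algebra and each $\tau$-orbit through a projective is infinite and never meets an injective, forcing $\calP(\C Q)$ and $\calI(\C Q)$ to be disjoint. The main obstacle is controlling acyclicity and the separation of components uniformly in the non-Dynkin case; this is typically handled by producing an additive function on $\Gamma(\modu\C Q)$ (the \emph{defect}, an AR-additive pairing with a minimal imaginary root) which is strictly positive on $\calP(\C Q)$, strictly negative on $\calI(\C Q)$, and zero on the remaining (regular) modules, thereby simultaneously certifying the split of components and ruling out cycles.
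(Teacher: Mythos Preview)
The paper does not supply its own proof of this proposition: it is quoted verbatim from \cite[Proposition~VIII~2.1]{eltofrep1} and used as a black box. So there is no in-paper argument to compare against; your proposal is a reconstruction of the textbook proof rather than of anything the author does.

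That said, your outline has two soft spots worth flagging. First, the acyclicity argument via a ``strictly monotone $\tau$-level along oriented paths'' is not literally correct: irreducible maps in $\calP(\C Q)$ can go between vertices at the \emph{same} $\tau$-level (namely $\tau^{-t}P(a)\to\tau^{-t}P(b)$ whenever there is a suitable arrow between $a$ and $b$ in $Q$), so the level function is only weakly monotone. Acyclicity really comes from combining the weak monotonicity in $t$ with the acyclicity of $Q$ itself at each fixed level; your sketch elides this.

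Second, and more seriously, your separation argument in the non-Dynkin case leans on ``an additive function (the defect) paired with a minimal imaginary root''. That device exists only for Euclidean (tame, affine) quivers; for wild $Q$ there is no minimal imaginary root and no defect form with the sign properties you want. The standard route to $\calP(\C Q)\neq\calI(\C Q)$ in general is different: one shows that $\calP(\C Q)$ has only finitely many $\tau$-orbits (indexed by $Q_0$), so if it contained an injective it would be a finite component, and a connected finite component of $\Gamma(\modu\C Q)$ forces $\C Q$ to be representation-finite, hence $Q$ Dynkin (this is Auslander's theorem). Your sketch would go through for affine $Q$ but not as written for wild $Q$.
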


$\calP (\C Q)$ is called the \emph{pre-projective} component of $\Gamma (\modu \C Q)$ and
$\calI (\C Q)$ is called the \emph{pre-injective} component of $\Gamma (\modu \C Q)$.
An indecomposable $\C Q$-module is called \emph{pre-projective} if it belongs to $\calP (\C Q)$ and 
it is called \emph{pre-injective} if it belongs to $\calI (\C Q)$.
From the above, if $Q$ is of Dynkin type, then $\calP ( \C Q)= \calI ( \C Q)$ from the theorem above.  
Then $\Gamma (\modu \C Q)$ is connected and $\Gamma (\modu \C Q) =\calP ( \C Q)= \calI ( \C Q)$ from the properties of projectives and injectives.
If $Q$ is not of Dykin type, there are representations which is neither pre-projective or pre-injective.
We will call the connected component $\calR (\C Q)$ of $\Gamma (\modu \C Q)$ to be \emph{regular}
if $\calR (\C Q)$ contains neither projective nor injective modules. An indecomposable representation is called \emph{regular} if it belongs to a regular component of $\Gamma (\modu \C Q)$ and an arbitrary indecomposable representation is called \emph{regular} if it is a direct sum of indecomposable regular representations.

\begin{example}
For Kronecker 2-quiver, we have the Auslander-Reiten quiver as follows
\[
\begin{tikzcd}[column sep=small]
   &  \C \rraws \C^2 \arrow[dr, shift right=1.5ex] \arrow[dr]  &                  & \cdots  & \arrow[d, dashed, no head]  & & \C^2 \rraws \C \arrow[dr, shift right=1.5ex] \arrow[dr] & \\
 0 \rraws \C \arrow[ur, shift right=1.5ex] \arrow[ur] &        & \C^2 \rraws \C^3 \arrow[ur, shift right=1.5ex] \arrow[ur] & & \calR (Q) & \cdots \arrow[ur, shift right=1.5ex] \arrow[ur] & & \C \rraws 0
\end{tikzcd}
\]
Note that the left component contains $P(1)$ and $P(2)$. Thus that is the pre-projective component. The right component contains $I(1)$ and $I(2)$ which means it is the pre-injective component. The middle component is the regular component which contains all the $\C^k \rraws \C^k$, where $k \geq 0$.
\end{example}

\subsubsection{Computing \texorpdfstring{$\Hom$}{Hom} and \texorpdfstring{$\Ext^1$}{Ext} group}  \label{rmk:ext_zero}

We are going to state the properties of the Auslander-Reiten quiver which is useful for computing the $\Hom$ and $\Ext^1$ group.

Let $V$ and $W$ be two indecomposable $\C Q$-module. A path in $\modu A$ from $V$ to $W$ is a sequence
\[ V = V_0 \xrightarrow{f_1} V_1 \xrightarrow{f_2} \cdots \xrightarrow{f_t} V_t =W \] 
where all the $V_i$ are indecomposable, and all the $f_i$ are nonzero nonisomorphisms. In this case, $V$ is called a \emph{predecessor} of $W$ and $W$ is called a \emph{successor} of $V$ in $\modu A$.

\begin{prop} \cite[VIII Lemma 2.5]{eltofrep1} \label{thm:ARprop}
\begin{itemize}
	\item Let $\calP$ be a preprojective component of the quiver $\Gamma (\modu \C Q)$ and $V$ be an indecomposable module in $\calP$. Then the number of predecessors of $V$ in $\calP$ is finite and any indecomposable $\C Q$-module $L$ such that $\Hom_{\C Q} (L, V) \neq 0$ is a predecessor of $V$ in $\calP$. In particular, $\Hom _{\C Q}(L, V) =0$ for all but finitely many nonisomorphic indecomposable $\C Q$-modules $L$.
	\item Let $\calI$ be a preinjective component of the quiver $\Gamma (\modu \C Q)$ and $N$ be an indecomposable module in $\calI$. Then the number of successors of $W$ in $\calI$ is finite and any indecomposable $\C Q$-module $L$ such that $\Hom_{\C Q} (W,L) \neq 0$ is a predecessor of $W$ in $\calI$. In particular, $\Hom _{\C Q}(W,L) =0$ for all but finitely many nonisomorphic indecomposable $\C Q$-modules $L$. 
\end{itemize}
\end{prop}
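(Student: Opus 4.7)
The plan is to combine the structural description of $\calP(\C Q)$ from Proposition~\ref{thm:AR_component} with two classical inputs from the representation theory of hereditary algebras: the orthogonality relations $\Hom(\calR, \calP) = 0 = \Hom(\calI, \calP)$, and the vanishing of the infinite radical on a preprojective component. I treat only the preprojective statement; the preinjective one is dual (swap $\tau \leftrightarrow \tau^{-1}$ and reverse all arrows). Write $V = \tau^{-t}P(a)$ with $t \geq 0$ and $a \in Q_0$, uniquely determined by Proposition~\ref{thm:AR_component}.

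For the finiteness of predecessors of $V$ in $\calP$, observe that $\calP$ embeds as a full subquiver of the translation quiver obtained by iterating $\tau^{-1}$ on the indecomposable projectives: its vertex set is $\{\tau^{-s}P(b) : s \geq 0,\ b \in Q_0\}$, and the irreducible morphisms fit into a mesh pattern in which each AR-quiver arrow either preserves the level $s$ or increases it by one. Consequently every oriented AR-path in $\calP$ terminating at $V$ must originate at some $\tau^{-s}P(b)$ with $s \leq t$, a set of size at most $(t+1)|Q_0|$.

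To show that any indecomposable $L$ with $\Hom(L, V)\neq 0$ is a predecessor of $V$ in $\calP$, first argue $L \in \calP$ by contradiction: if $L \in \calR$ or $L \in \calI$, iterate the Auslander--Reiten translation. Since $V$ is preprojective of level $t$, $\tau^{t+1} V = \tau P(a) = 0$, whereas $\tau^i L$ remains a nonzero indecomposable for every $i \geq 0$ (regular modules have doubly infinite $\tau$-orbits, and a preinjective $L = \tau^{s}I(c)$ has $\tau^i L = \tau^{s+i}I(c)$ still in $\calI$). Combined with the Auslander--Reiten formula of Theorem~\ref{thm:ARformula} and the fact that $\tau$ descends to an equivalence on the stable category modulo projectives, any nonzero $L \to V$ would transport after $t+1$ applications of $\tau$ to a nonzero morphism into $0$, a contradiction. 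Once $L \in \calP$, produce an AR-path by invoking the vanishing $\bigcap_n \operatorname{rad}^n(L, V) = 0$ on $\calP$ (essentially equivalent to the finiteness from the previous paragraph): any nonzero $f\colon L \to V$ lies in $\operatorname{rad}^n(L,V) \setminus \operatorname{rad}^{n+1}(L,V)$ for some finite $n$, hence decomposes as an $n$-fold composition of irreducible morphisms between indecomposables of $\calP$ by the definition of the AR quiver, yielding the desired path.

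I anticipate the main obstacle to be the orthogonality $\Hom(\calR \cup \calI, \calP) = 0$: although standard for hereditary algebras, a fully rigorous proof requires either Coxeter-transformation estimates on dimension vectors (using that Coxeter powers of a preprojective dimension vector escape the preinjective/regular cone in $\N^{|Q_0|}$) or a careful inductive stable-category argument that tracks projective summands appearing after repeated $\tau$. In contrast, the finiteness argument in the second paragraph is purely combinatorial, and the radical-vanishing step reduces cleanly to it.
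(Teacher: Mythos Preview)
The paper does not supply its own proof here; the proposition is quoted verbatim from \cite[VIII Lemma 2.5]{eltofrep1}. Your finiteness-of-predecessors paragraph is correct. The genuine gap is in your two-step treatment of the second claim, where you first try to force $L \in \calP$ by iterating $\tau$ and invoking that $\tau$ is an equivalence on the stable category modulo projectives. That equivalence only yields $\underline{\Hom}(L,V) \cong \underline{\Hom}(\tau L, \tau V)$, and a nonzero $f\colon L \to V$ may well become zero in $\underline{\Hom}$ by factoring through a projective, so no contradiction follows from $\tau^{t+1}V = 0$. (For hereditary $\C Q$ one can upgrade $\tau$ to an honest equivalence between modules without projective direct summands and modules without injective direct summands, which would repair the step, but you do not invoke that stronger fact.) There is also a circularity hazard: the orthogonality $\Hom(\calR \cup \calI, \calP) = 0$ that you flag as the main obstacle is exactly Proposition~\ref{thm:PRI}, which in the source is Corollary VIII.2.13 and is \emph{deduced from} the present lemma.

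The clean fix, which is essentially the argument in the cited reference, removes the detour entirely. Given a nonzero non-isomorphism $f\colon L \to V$, factor it through the right minimal almost split morphism $g\colon E \to V$; every indecomposable summand $E_i$ of $E$ admits an irreducible map to $V$ and therefore lies in the same AR-component $\calP$. Some component $h_i\colon L \to E_i$ is nonzero, and one iterates with $E_i$ in place of $V$. The resulting chain of irreducible morphisms sits entirely in $\calP$, so by your own finiteness paragraph together with the acyclicity of $\calP$ from Proposition~\ref{thm:AR_component} it has bounded length; termination forces $L$ to be isomorphic to one of the modules encountered, whence $L \in \calP$ and is a predecessor of $V$. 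Thus the factorization argument produces the AR-path and the membership $L \in \calP$ simultaneously, and requires no separate input about $\calR$ or $\calI$ at all.
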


From the theorem above, we can tell immediately that if $V$ is a predecessor of $W$, then $\Hom(W,V) =0 $, where $V$ and $W \in \calP$ or $V$ and $W \in \calI$. We can also understand the $\Hom $ group between different connected component in $\Gamma (\modu \C Q)$.

\begin{prop} \cite[Corollary VIII 2.13]{eltofrep1} \label{thm:PRI}
Let $P$, $I$ and $R$ be three indecomposable $\C Q$-module. 
\begin{enumerate}
	\item If $P$ is preprojective and $R$ is regular, then $\Hom (R, P) = 0$.
	\item If $P$ is preprojective and $I$ is preinjective, then $\Hom (I, P) = 0$.
	\item If $R$ is regular and $I$ is preinjective, then $\Hom (I, R) = 0$.
\end{enumerate}
We may write as $\Hom (\calR, \calP) = \Hom (\calI, \calP) = \Hom ( \calI , \calR) = 0$. 
\end{prop}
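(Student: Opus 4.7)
The plan is to deduce all three vanishings from \Cref{thm:ARprop} together with the fundamental fact that the preprojective component $\calP(\C Q)$, the preinjective component $\calI(\C Q)$, and any regular component $\calR(\C Q)$ are pairwise disjoint connected components of $\Gamma(\modu \C Q)$. Indeed, the definition of a regular component forces $\calR \cap \calP = \calR \cap \calI = \emptyset$, and the last sentence of \Cref{thm:AR_component} gives $\calP \cap \calI = \emptyset$ whenever $Q$ is non-Dynkin (the case where regular modules exist and the three assertions are nontrivial). So the whole proposition reduces to: a nonzero $\Hom$ confines the source and target to the same component.

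For part (1), I would suppose $\Hom(R, P) \neq 0$ and apply the first bullet of \Cref{thm:ARprop} with $V = P$ and $L = R$: any indecomposable module mapping nontrivially into the preprojective $P$ must be a predecessor of $P$ in $\calP$, hence lie in $\calP$. This contradicts $R \in \calR$. Part (3) is dual: assuming $\Hom(I, R) \neq 0$, the second bullet of \Cref{thm:ARprop} (with $W = I$, $L = R$) forces $R$ to lie in $\calI$, contradicting regularity of $R$. Part (2) follows the same way: from $\Hom(I, P) \neq 0$ I can apply either bullet — the first bullet with $L = I$ puts $I \in \calP$, or the second bullet with $L = P$ puts $P \in \calI$ — each contradicting $\calP \cap \calI = \emptyset$.

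There is no real obstacle: the proof is essentially bookkeeping that records, in each of the three cases, which module is forced into which component. The only care needed is to pair the correct bullet of \Cref{thm:ARprop} with the correct pair of modules, and to note that the Dynkin case contributes nothing because $\calR = \emptyset$ and the distinction between $\calP$ and $\calI$ disappears. Once those matching rules are fixed, each case is a one-line application of the component-containment principle.
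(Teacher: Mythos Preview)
The paper does not give its own proof of this proposition; it is quoted as \cite[Corollary VIII 2.13]{eltofrep1} and used without argument. Your deduction from \Cref{thm:ARprop} is correct and is exactly the standard proof (and in fact the one given in \cite{eltofrep1}): a nonzero morphism into a preprojective module forces the source into $\calP$, and a nonzero morphism out of a preinjective module forces the target into $\calI$, so the three vanishings follow from the disjointness of the components.
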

Let us abuse the term `predecessor' and `sucessor' to say elements in $\calP$ are predecessors of elements in $\calR$, $\calI$ and elements in $\calR$ are predecessors of elements of $\calI$. We will do the same for sucessor.

From the properties of the AR quivers, we observe the following properties which will be useful for computations in later chapters.

\begin{lemma} \label{thm:lemmaPRI}
\[\Ext^1(\calP, \calR) = \Ext^1( \calP, \calI) = \Ext^1 (\calR, \calI) = 0.\]
\end{lemma}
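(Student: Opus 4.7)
The plan is to invoke the Auslander--Reiten formula (Theorem \ref{thm:ARformula}), which gives
\[
\Ext^1(V, W) \cong D\,\underline{\Hom}(\tau^{-1}W, V),
\]
and then reduce each of the three vanishings to the $\Hom$-vanishings already established in Proposition \ref{thm:PRI}. The key preliminary observation I would verify is that the inverse Auslander--Reiten translate $\tau^{-1}$ preserves the component type in the sense that $\tau^{-1}R$ is again regular whenever $R$ is regular, $\tau^{-1}I$ is preinjective (or zero, in case $I$ is injective), and similarly for preprojectives. This follows from Proposition \ref{thm:AR_component} together with Theorem \ref{thm:almostsplit}: a regular component contains no projectives or injectives and is closed under $\tau$ and $\tau^{-1}$, while a preinjective component is described as $\tau^s I(a)$, so $\tau^{-1}$ just decreases the index $s$ (possibly reaching $I(a)$ itself, in which case $\tau^{-1}I(a) = 0$).

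Next I would dispatch the three cases uniformly. For $\Ext^1(\calP, \calR)$, take $P$ preprojective and $R$ regular; then $\tau^{-1}R$ is regular, so $\Hom(\tau^{-1}R, P) = 0$ by Proposition \ref{thm:PRI}(1), which kills $\underline{\Hom}(\tau^{-1}R, P)$ and hence $\Ext^1(P, R)$. For $\Ext^1(\calP, \calI)$, take $P$ preprojective and $I$ preinjective; then $\tau^{-1}I$ is preinjective or zero, and in the non-trivial case $\Hom(\tau^{-1}I, P) = 0$ by Proposition \ref{thm:PRI}(2). For $\Ext^1(\calR, \calI)$, take $R$ regular and $I$ preinjective; again $\tau^{-1}I$ is preinjective or zero, and $\Hom(\tau^{-1}I, R) = 0$ by Proposition \ref{thm:PRI}(3).

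Since the lemma asserts vanishing for arbitrary representations in the relevant components (not just indecomposables), I would end by pointing out that $\Ext^1$ commutes with finite direct sums in each variable, so the vanishing on indecomposable summands immediately extends. The only subtle point in the whole argument is the preservation of component type under $\tau^{-1}$, which is what makes Proposition \ref{thm:PRI} directly applicable; the rest is a formal application of the Auslander--Reiten formula. There is no real obstacle here, as Theorem \ref{thm:ARformula} is custom-built to convert $\Ext^1$-vanishing into $\Hom$-vanishing.
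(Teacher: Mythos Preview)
Your proof is correct but takes a genuinely different route from the paper. The paper argues by contradiction: assuming a non-split short exact sequence $0 \to R \to V \to P \to 0$, it decomposes the middle term $V = \bigoplus V_i$ into indecomposables, locates a summand $V_i$ meeting both the image of $R$ and mapping nontrivially to $P$, and then runs through the cases $V_i \in \calP, \calR, \calI$, each of which produces a nonzero morphism forbidden by Proposition~\ref{thm:PRI}. Your approach instead applies the Auslander--Reiten formula directly to convert $\Ext^1$ into $\underline{\Hom}$, then feeds in the component-preservation of $\tau^{-1}$ and the same $\Hom$-vanishings.

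Your argument is cleaner and more uniform: the only substantive step is the (easy) observation that $\tau^{-1}$ keeps regular modules regular and preinjective modules preinjective-or-zero, after which the three cases are mechanical. The paper's argument is more elementary in that it avoids the AR formula at this stage (though the formula is used anyway in the very next lemma), but it relies on a somewhat informal claim about the existence of a summand $V_i$ interacting nontrivially with both ends of the sequence, which needs a bit of care to make precise. Both approaches ultimately reduce to Proposition~\ref{thm:PRI}; yours does so in one line rather than via a case analysis on the middle term.
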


\begin{proof}
Let us first show $\Ext^1(\calP, \calR) =0$. If not, there exists a non-split exact sequence
\[ 0 \longrightarrow R \xrightarrow{f} V \xrightarrow{g} P \longrightarrow 0 \]
where $R \in \calR$, $P \in \calP$ and for some $V \in \rep (Q)$. Decompose $V = \bigoplus V_i$ where $V_i$ are indecomposable. Consider $V_i$ such that $V_i \cap \im f \neq 0$ and $g(V_i) \neq 0$. If such a $V_i$ does not exist, the exact sequence will split. Then if $V_i \in \calR$, $g|_{V_i} : V_i \raw P$ nonzero will contradict Theorem \ref{thm:PRI}.
 Similarly, if $V_i \in \calP$ or $\calI$, it will contradict Theorem \ref{thm:PRI}. Thus $\Ext^1(\calP, \calR) =0$. Repeating the same argument will give us $ \Ext^1( \calP, \calI) = \Ext^1 (\calR, \calI) = 0$.
\end{proof}

\begin{lemma} \label{thm:lemmaext}
If $V, W \in \calP$ or $V, W \in \calI$ and if $\Hom (V, W) \neq 0$, then 
\[\Ext^1(V, W) =0.\]
\end{lemma}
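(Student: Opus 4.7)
The plan is to apply the Auslander-Reiten formula of Theorem \ref{thm:ARformula} to rewrite $\Ext^{1}(V,W)$ as (the dual of) a stable Hom space, reduce that stable Hom to an honest Hom by killing the morphisms factoring through an injective or projective, and then extract from a non-vanishing $\Hom(W,\tau V)$ a directed cycle in the AR quiver that would contradict the acyclicity asserted by Proposition \ref{thm:AR_component}. I will carry out the preprojective case; the preinjective case is entirely analogous with the roles of injectives and projectives swapped.

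So assume $V,W \in \calP$ with $\Hom(V,W) \neq 0$. If $V$ is projective then $\Ext^{1}(V,W)=0$ holds trivially, so I may assume $V = \tau^{-t}P(a)$ with $t \geq 1$, in which case $\tau V = \tau^{-(t-1)}P(a)$ is a non-zero indecomposable still lying in $\calP$. Theorem \ref{thm:ARformula} gives $\Ext^{1}(V,W) \cong D\,\overline{\Hom}(W,\tau V)$, and I would first observe that $\overline{\Hom}(W,\tau V)=\Hom(W,\tau V)$: any factorization $W \to I \to \tau V$ through an injective $I$ has $I \in \calI$, so the second factor $I \to \tau V$ is forced to vanish by the identity $\Hom(\calI,\calP)=0$ in Proposition \ref{thm:PRI}. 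Hence $\Ext^{1}(V,W) \neq 0$ is equivalent to $\Hom(W,\tau V) \neq 0$.

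The heart of the argument is then a cycle argument in $\calP$. Proposition \ref{thm:ARprop} converts $\Hom(V,W) \neq 0$ and $\Hom(W,\tau V) \neq 0$ into two paths $V \leadsto W$ and $W \leadsto \tau V$ in the AR quiver of $\calP$, while the almost split sequence $0 \to \tau V \to E \to V \to 0$ furnished by Theorem \ref{thm:almostsplit} gives a length-two path $\tau V \to E_{i} \to V$ through any indecomposable summand $E_{i}$ of $E$ via which the sequence does not split (such a summand exists, or else the sequence would split). Concatenating these three paths produces a directed cycle
\[
V \;\longrightarrow\; W \;\longrightarrow\; \tau V \;\longrightarrow\; V
\]
inside $\calP$, contradicting the acyclicity of $\calP$ from Proposition \ref{thm:AR_component}. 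For the preinjective case I would instead use $\Ext^{1}(V,W) \cong D\,\underline{\Hom}(\tau^{-1}W,V)$ and reduce $\underline{\Hom}$ to $\Hom$ by the same observation that $\tau^{-1}W \to P \to V$ must vanish when $P$ is projective, and then run the mirror cycle argument in $\calI$. The mildly delicate point I foresee is checking that the cycle is genuinely nontrivial even when some of $V,W,\tau V$ coincide; this is saved by the fact that $\tau X \neq X$ for any indecomposable $X$ in a preprojective or preinjective component, so at least two of the three vertices of the cycle are always distinct.
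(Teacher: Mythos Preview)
Your proof is correct and follows essentially the same route as the paper: assume $\Ext^{1}(V,W)\neq 0$, invoke the Auslander--Reiten formula to obtain a nonzero map $W\to\tau V$, and then concatenate paths $V\leadsto W\leadsto \tau V\leadsto E_i\leadsto V$ to produce a cycle in $\calP$ contradicting Proposition~\ref{thm:AR_component}. Your explicit reduction of $\overline{\Hom}$ to $\Hom$ via $\Hom(\calI,\calP)=0$ is a nice touch but in fact unnecessary, since $\overline{\Hom}$ is a quotient of $\Hom$ and you only need the implication $\overline{\Hom}(W,\tau V)\neq 0\Rightarrow \Hom(W,\tau V)\neq 0$.
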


\begin{proof}
Consider $V, W \in \calP$. If $V$ is projective, then  $\Ext (V, W) =0$ for all $2$. Now assume $V$ is not projective. Then by Auslander-Reiten Theorem \ref{thm:ARformula} which tells $\Hom ( W, \tau  V) \neq 0$, i.e. we have a map
\[  W \longrightarrow \tau V  \]
By Theorem \ref{thm:almostsplit} and the remark above, there exists $E $ for $V$ non-projective such that there is an almost split exact sequence
\[0 \longrightarrow \tau V \xrightarrow{f} E \xrightarrow{g} V \longrightarrow 0. \]
We can again decompose $E = \bigoplus E_i$ where $E_i$ are indecomposable. We can further assume $E_i  \in \calP$ for all $i$ by Theorem \ref{thm:PRI}. Pick $E_i$ such that $E_i \cap \im (f) \neq 0 $ and
$g(E_i) \neq 0$. Such an $E_i$ exists because the sequence is non-split. Then $\Hom (\tau V , E_i) \neq 0$ which implies $\tau V$ is a predecessor of $E_i$. Also $\Hom (E_i, V ) \neq 0$ implies $E_i$ is a predecessor of V. Then there exists a path in AR quiver such that
\[ W \raw \tau V \raw \cdots \raw E_i \raw \cdots \raw V \raw \cdots \raw W.\]
The last arrow follows from the assumption $\Hom (V, W) \neq 0$. Then we obtain a cyclic in $\calP$ which contradicts to Theorem \ref{thm:ARprop}. We can repeat a similar argument for $V, W \in \calI$.
\end{proof}

\begin{lemma} \label{thm:extnonzero}
Now if $V, W \in \calP$ or $V, W \in \calI$, assume $\Ext^1 (W,V) \neq 0$, then $V$ is a predecessor of $W$ in the AR quiver. That is $\Hom (W, V) =0$. 
\end{lemma}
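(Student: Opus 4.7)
The plan is to exhibit an oriented path from $V$ to $W$ in the Auslander--Reiten quiver by combining the Auslander--Reiten formula with a suitable almost split sequence, and then to invoke acyclicity of the preprojective (respectively preinjective) component together with Proposition \ref{thm:ARprop} to conclude $\Hom(W,V)=0$. In other words, once $V$ is shown to be a predecessor of $W$, the conclusion $\Hom(W,V)=0$ is automatic: a nonzero map $W\to V$ would, again by Proposition \ref{thm:ARprop}, make $W$ a predecessor of $V$, producing an oriented cycle in the component and contradicting Proposition \ref{thm:AR_component}.

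First consider the case $V,W\in\calP$. Since $\Ext^1(W,V)\neq 0$, the module $W$ cannot be projective, so $\tau W$ is defined and still lies in $\calP$ (recall $\calP$ consists of the modules $\tau^{-t}P(a)$, so $\tau$ shifts $t\mapsto t-1$ inside the component). The Auslander--Reiten formula of Theorem \ref{thm:ARformula},
\[
\Ext^1(W,V)\;\cong\;D\,\overline{\Hom}(V,\tau W),
\]
then forces $\Hom(V,\tau W)\neq 0$, so Proposition \ref{thm:ARprop} produces a path $V=V_0\to V_1\to\cdots\to V_t=\tau W$ in $\calP$. Next, Theorem \ref{thm:almostsplit} provides an almost split sequence $0\to\tau W\to E\to W\to 0$; after decomposing $E=\bigoplus E_i$ into indecomposables (all of which lie in $\calP$), the non-split remark following Theorem \ref{thm:almostsplit} guarantees that at least one summand $E_i$ admits nonzero irreducible morphisms $\tau W\to E_i$ and $E_i\to W$. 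Concatenating yields a path $V\to\cdots\to\tau W\to E_i\to W$ in $\calP$, so $V$ is a predecessor of $W$, and the acyclicity argument of the previous paragraph gives $\Hom(W,V)=0$.

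The case $V,W\in\calI$ is symmetric: now $V$ cannot be injective (otherwise $\Ext^1(-,V)=0$), so $\tau^{-1}V$ is defined and still lies in $\calI$. The other half of the Auslander--Reiten formula, $\Ext^1(W,V)\cong D\,\underline{\Hom}(\tau^{-1}V,W)$, yields $\Hom(\tau^{-1}V,W)\neq 0$, and the almost split sequence $0\to V\to E\to \tau^{-1}V\to 0$ supplies a path $V\to E_i\to\tau^{-1}V\to\cdots\to W$, after which the same cycle-in-$\calI$ contradiction applies. The main obstacle worth being careful about is not any single computation but the bookkeeping that $\tau W$ (respectively $\tau^{-1}V$) is well defined and remains in the same component $\calP$ (respectively $\calI$); once that is verified, path assembly and the acyclicity contradiction are immediate from the tools already set up.
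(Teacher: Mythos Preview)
Your argument is correct, but it is not the route the paper takes. The paper works directly with the hypothesis: since $\Ext^1(W,V)\neq 0$ there is a non-split short exact sequence $0\to V\to E\to W\to 0$, and after decomposing $E=\bigoplus E_i$ one finds (as in the proofs of Lemmas \ref{thm:lemmaPRI} and \ref{thm:lemmaext}) a summand $E_i$ with both $\Hom(V,E_i)\neq 0$ and $\Hom(E_i,W)\neq 0$, giving the path $V\to\cdots\to E_i\to\cdots\to W$ immediately. Your proof instead recycles the machinery of Lemma \ref{thm:lemmaext}: you pass through the Auslander--Reiten formula to obtain $\Hom(V,\tau W)\neq 0$ (respectively $\Hom(\tau^{-1}V,W)\neq 0$) and then append the almost split sequence to bridge $\tau W$ to $W$. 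This works, and the bookkeeping you flag ($\tau W\in\calP$, $\tau^{-1}V\in\calI$) is fine, but it invokes a strictly stronger tool---Theorem \ref{thm:ARformula}---where the paper gets by with nothing more than the existence of a non-split extension. The paper's approach is thus more elementary and self-contained; yours has the virtue of being a uniform template (the same move as in Lemma \ref{thm:lemmaext}), which may be what led you to it.
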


\begin{proof}
Consider $V, W \in \calP$. As $\Ext^1 (W,V) \neq 0$, we have a non-split exact sequence
\[ 0 \longrightarrow V \longrightarrow E \longrightarrow W \longrightarrow 0 ,\]
for some $E \in \rep (Q)$. By repeating the arguments in the proofs of the two theorem above, we can decompose $E$ as sums of indecomposable $E_i$ with $E_i \in \calP$. Furthermore, there exists $E_i$ such that $V$ is a predecessor of $E_i$ and $E_i$ is a predecessor of $W$ in $\calP$, then we have a path in $\calP$: 
\[ V \raw \dots \raw E_i \raw \cdots \raw W.\]
This shows that $V$ is a predecessor of $W$. This holds similarly for $V, W \in \calI$.
\end{proof}

Combining all the lemmas above, we have if $V, W \in \calP$ or $V, W \in \calI$, if $\Hom (V, W) \neq 0$, then $\Ext^1(V, W) =0$. However if $\Ext^1 (W,V) \neq 0$, then $\Hom (W, V) =0$. This is saying that
\[ \chi(V, W) = \dim \Hom (V, W) -\dim \Ext^1 (V, W)\]
actually equals to either $\dim \Hom (V, W)$ or $-\dim \Ext^1 (V, W)$.

\section{Introduction to scattering diagrams and theta functions}
\subsection{Scattering Diagram} \label{sec:scatteringdiag}

In this section, we will go over the definition of scattering diagrams which will be associated to cluster algebras $\calA( \epsilon)$.
As this article focus on the cluster algebras associated to quivers, we would assume the exchange matrix $\epsilon$ to be skew-symmetric. 

Let $N$ be a rank $n$ lattice, $M = \Hom (N, \Z)$. 
Denote $M_{\R} = M \otimes \R$, $N_{\R} = N\otimes \R$. 
Take $\Bbbk$ to be a field of characteristic 0.
Now a \emph{seed data} $\seed = (e_i)$ would be a basis $e_1, \dots , e_n $ of $N$. 
Denote
\[
N^+ = \{ \sum a_i e_i | \quad a_i \geq 0, \sum a_i >0 \}
\]
Let $f_1, \dots, f_n$ be the dual basis in $M$.
Given $\textbf{m}\in M$, we would denote $z^m \in \Bbbk[M]$ as $A_1^{m_1}\cdots A_\n^{m_n}$ if $m=m_1f_1+\dots +m_nf_n$.

We further fix a skew-symmetric bilinear form on $N$
\[ \{ \cdot, \cdot \} : N \times N \raw \Z .\]
Define the skew symmetric matrix $\epsilon$ with $\epsilon_{ij} = \{ e_i, e_j \}$. 
This is the same matrix as in \eqref{eq:quivermat}. 
The skew-symmetric form induces a map
\begin{align*}
		p^* :  N \longrightarrow  M , \qquad
		       \textbf{n} \mapsto  \{ \textbf{n} , \cdot \}
\end{align*}

To have scattering diagrams well-defined, $p^*$ is needed to be injective which does not hold in general.
We will first state the definition of scattering diagrams with assuming the injectivity of $p^*$.
In the next section (Section \ref{sec:ghkk}), we would switch the seed data in $N$ to the `doubling lattice' $\wN$.
Then the injectivity of $p^*$ would naturally follows and then the scattering diagrams defined below will be well-defined without extra assumption. 

\begin{definition}
A wall in $M_{\R} = M \otimes_{Z} \R$ is a pair $(\frakd, f_{\frakd})$ where
\begin{itemize}
	\item $\frakd \subseteq M_{\R}$, support of the wall, is a convex rational polyhedral cone of codimension one, contained in $n^{\perp}$ for some $\textbf{n} \in N^+$, 
	\item $f_{\frakd} \in \C [[z^{p^*(e_i)}, i = 1, \dots, n]]$ such that $ f_{\frakd} = 1+ \sum_{k \geq 1} c_k z^{k p^*(\textbf{n})}$ for some $c_k \in (A_1, \cdots A_\n)$. 
\end{itemize}
A wall $(\frakd, f_{\frakd})$ is called \emph{incoming} if $p^* (n) \in \frakd$. Otherwise it is called \emph{outgoing}.
\end{definition}

\begin{definition}  \label{def:scattering_diagram}
  A scattering diagram $\frakD$ is a collection of walls such that, for each $k \geq  0$, the set
  \[
    \{ (\frakd, f_{\frakd}) \in \frakD\, |\, f_{\frakd} \neq 1 \bmod (z^{p^* (e_i)})^k \}
  \] 
  is finite. The support of a scattering diagram, Supp$(\frakD)$, is the union of the supports of its walls. We will write
  \[
  	Sing (\frakD) = \bigcup_{\frakd \in \frakD} \partial \frakd \cup \bigcup_{\frakd_1, \frakd_2 \in \frakD ,  \dim \frakd_1 \cap \frakd_2 = n-2} \frakd_1 \cap \frakd_2.
  \]
\end{definition}

\subsubsection*{Path-ordered product/ Wall crossing}
Next we consider a smooth immersion
\[  \gamma: [0,1] \rightarrow M_{\mathbb{R}} \backslash  \{0 \}  \]
with endpoints not in the support of $\frakD$. Assume $\gamma$ is transversal to each wall of $\frakD$. For each power $k \geq 1$, we can find the sequence of numbers
$ 0< t_1 \leq t_2 \leq \cdots \leq t_s < 1 $ with $\gamma(t_i)\in\frakd_i$
for some $i$ with
$f_{\frakd_i} \neq 1 \text{ mod } \frakm^k   $ and $\frakd_i\not=\frakd_j$ whenever
$t_i=t_j$. For each $i$, define $\frakp_{\frakd_i}\in {\mathrm{Aut}}_{\Bbbk-alg}\big(\Bbbk [[z^{p^* (e_i)}]]\big)$, the \emph{path-ordered product} as
\begin{equation} \label{eqn:pathordered}
\frakp_{\gamma, \frakd_i} (z^m) = z^m f_{\frakd_i}^{\langle m, n_0 \rangle },
\end{equation}
where the lattice point $n_0 \in N$ is primitive, annihilates the tangent space to $\frakd_i$, and is uniquely determined by the sign convention $ \langle n_0, \gamma'(t_i) \rangle <0$. 
Take $\frakp_{\gamma, \frakD,k} := \frakp_{\frakd_s} \circ  \cdots \circ \frakp_{\frakd_1}$. We can then define the \textit{path-ordered product} as
\begin{equation} \label{eqn:path_ordered}
	\frakp_{\gamma, \frakD} = \lim_{k \rightarrow \infty} \frakp _{\gamma, \frakD,k}.
\end{equation}

With the path-ordered product, we can talk about equivalence among scattering diagrams.
\begin{definition}
Two scattering diagram $\frakD$, $\frakD'$ are \emph{equivalent} if $\frakp_{\gamma, \frakD} = \frakp_{\gamma, \frakD'}$ for all path $\gamma$ for which both are defined.
\end{definition}

Note that the path-ordered product depends on the path $\gamma$ on $\frakD$. We will call $\frakD$ consistent if the automorphism only depends on the endpoint of the path $\gamma$.

\begin{definition} \label{def:consistent}
A scattering diagram is \emph{consistent} if $\frakp _{\gamma, \frakD}$ only depends on the endpoints of $\gamma$ for any path $\gamma$ for which $\frakp _{\gamma, \frakD}$ is well defined.
\end{definition}

Not every scattering diagram $\frakD$ is consistent; however, there always exists a consistent scattering diagram which contains $\frakD$ as seen in the following theorem.

\begin{theorem}(Kontsevich-Soibelman \cite{KS}, Gross-Siebert \cite{GS}) \label{th:KS}
Given a scattering diagram $\frakD$, there always exists a consistent scattering diagram $\frakD'$ which contains $\frakD$ such that $\frakD'\setminus\frakD$ only consists only of outgoing walls, and it is unique up to equivalence.
\end{theorem}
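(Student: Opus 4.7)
The plan is to construct $\frakD'$ inductively, order by order with respect to the $\fm$-adic filtration on the ring of wall functions, where $\fm = (A_1, \ldots, A_\n)$. At stage $k$ we produce a scattering diagram $\frakD_k$, equivalent to $\frakD$ outside of added outgoing walls, that is consistent modulo $\fm^{k+1}$; Definition~\ref{def:scattering_diagram} ensures only finitely many walls contribute nontrivially at each stage, so the limit $\frakD' := \bigcup_k \frakD_k$ is a well-defined scattering diagram.

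For the base case, truncate $\frakD$ modulo $\fm^2$; consistency is automatic because first-order wall-crossings commute. For the inductive step, assume $\frakD_k$ is consistent modulo $\fm^{k+1}$. For each joint $\frakj \in \mathrm{Sing}(\frakD_k)$, pick a small loop $\gamma_{\frakj}$ around $\frakj$ and set $\Theta_{\frakj} := \frakp_{\gamma_{\frakj}, \frakD_k}$. By the inductive hypothesis, $\log \Theta_{\frakj}$ has no components of degree $\leq k$ in the graded Lie algebra $\frakg = \bigoplus_{\textbf{n} \in N^+} \frakg_{\textbf{n}}$ associated to the $\fm$-adic filtration, so its degree-$(k+1)$ part is a finite sum $\sum_{\textbf{n}} c_{\frakj,\textbf{n}} \, z^{p^*(\textbf{n})} \partial_{p^*(\textbf{n})}$. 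For each nonzero $c_{\frakj,\textbf{n}}$, adjoin a single outgoing ray based at $\frakj$ supported in $\textbf{n}^{\perp}$ on the side opposite $p^*(\textbf{n})$, with wall function $1 + c_{\frakj,\textbf{n}} \, z^{p^*(\textbf{n})} + \cdots$ chosen so that its path-ordered contribution exactly cancels the $\textbf{n}$-graded part of $\log \Theta_{\frakj}$. Performing this surgery at every joint yields $\frakD_{k+1}$, consistent modulo $\fm^{k+2}$.

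The main obstacle is showing that this procedure is internally coherent: newly adjoined rays from distinct joints may themselves cross and spawn new joints whose consistency must be re-examined. The key observation is that each newly-added wall function lies in $1 + \fm^{k+1}$, so the commutator of two such wall-crossings lives in $\fm^{2(k+1)} \subseteq \fm^{k+2}$. Thus new-on-new intersections generate no obstruction at the current stage, and mixed old-and-new joints are handled because the relevant graded piece of $\log \Theta_{\frakj}$ is forced by a direct grading argument in $\frakg$ to lie entirely in outgoing directions---precisely the kind of walls we are permitted to introduce.

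Uniqueness up to equivalence is then an inductive consequence of the same setup. Suppose $\frakD_1'$ and $\frakD_2'$ are two consistent extensions of $\frakD$ whose extra walls are all outgoing, and let $k+1$ be the smallest order at which they differ. At this order the discrepancy is concentrated on outgoing rays; comparing the wall-crossings along a small loop around each joint, the degree-$(k+1)$ graded component of $\frakp_{\gamma, \frakD_1'}^{-1} \circ \frakp_{\gamma, \frakD_2'}$ would have to be both trivial (by consistency of both sides) and equal to the difference of the outgoing contributions (by the grading argument above). Hence the outgoing data agree at order $k+1$, contradicting the choice of $k+1$, and therefore $\frakD_1' \sim \frakD_2'$.
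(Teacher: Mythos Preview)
The paper does not prove this theorem; it is quoted as a result of Kontsevich--Soibelman \cite{KS} and Gross--Siebert \cite{GS} and used as a black box. So there is no proof in the paper to compare against.

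Your sketch is the standard order-by-order argument from those references, and the architecture is right: truncate modulo $\fm^{k+1}$, measure the failure of consistency by the loop monodromy $\Theta_{\frakj}$ around each joint, extract its degree-$(k+1)$ part in the $N^+$-graded Lie algebra, cancel it with new walls whose functions lie in $1+\fm^{k+1}$, and close the induction via the commutator estimate $[1+\fm^{k+1},\,1+\fm^{k+1}]\subset 1+\fm^{k+2}$. The uniqueness argument is also the correct one.

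Two points deserve tightening. First, your language is implicitly two-dimensional (``a single outgoing ray based at $\frakj$''); in the cone setup of Definition~\ref{def:scattering_diagram} walls are codimension-one cones and joints are codimension-two cones, so the wall you add is the half of $\textbf{n}^{\perp}$ bounded by $\frakj$ on the side containing $-p^*(\textbf{n})$, not a ray. Second, the sentence ``the relevant graded piece of $\log\Theta_{\frakj}$ is forced by a direct grading argument in $\frakg$ to lie entirely in outgoing directions'' mis-states the mechanism. Nothing forces the obstruction to be outgoing; rather, $\log\Theta_{\frakj}$ decomposes along the $N^+$-grading, and for each piece indexed by $\textbf{n}$ you \emph{choose} to place the correcting wall on the $-p^*(\textbf{n})$ side of $\frakj$, which makes it outgoing by construction. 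The reason mixed old-new joints cause no trouble at stage $k+1$ is simply that the new wall function is $\equiv 1\bmod\fm^{k+1}$, so inserting it into any loop product changes nothing modulo $\fm^{k+1}$ and only degree $\geq k+2$ terms appear from its interaction with old walls. With these clarifications the argument goes through.
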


\subsubsection{Cluster algebra scattering diagrams} \label{sec:ghkk}

As stated in above, we will consider the \emph{double} of the lattice $N$ by considering
\begin{align*}
    \wN = N \oplus M, 
\end{align*}
and the corresponding skew-symmetric bilinear form as 
\begin{align*}
     \{ (n_1, m_1) , (n_2, m_2) \}  = \{n_1, n_2 \} + \langle n_1, m_2 \rangle - \langle n_2 , m_1 \rangle.  
\end{align*}
Then the map
\begin{align*}
		\wpp^* :  N\oplus M \longrightarrow  M \oplus N, \qquad
		       (n, m) \mapsto  \{ (n, m), \cdot \}
\end{align*}
would then be injective.

With an initial seed data $\seed = (e_i)$ in $N$, the corresponding seed in $\wN$ is
\[
\wseed = \big( (e_1, 0), \dots, (e_\n,0), (0,f_1), \dots, (0,f_\n)   \big),
\]
where $(f_i)$ is the dual basis of $(e_i)$. 
Note that $\wseed $ gives a basis of $\wN$.
The dual space $\widetilde{M}_\R \cong M_\R \oplus N_\R$.
We will denote $z^{(m,n)} = A_1^{m_1} \cdots A_\n^{m_\n} X_1^{n_1} \cdots X_\n^{n_\n}$ for $\textbf{m} = m_1f_1 + \dots +m_\n f_\n $ and $\textbf{n} = n_1 e_1 + \dots n_\n e_\n$.

We will now construct a scattering diagram. Set
\begin{equation} \label{eqn:construction}
	\frakD_{in, \seed}^{\calA_{\text{prin}}}= \{ ( (e_i,0)^{\perp}, 1+z^{\wpp^*(e_i,0)} ) | \quad i =1, \dots \n \}.
\end{equation}

Then by Theorem \ref{th:KS}, there exists a consistent scattering diagram $\Dprin$ containing $\frakD_{in, \seed}^{\calA_{\text{prin}}} $. 
We can describe it more explicitly.

\begin{prop} \cite{ghkk}
There exists a consistent scattering diagram $\Dprin$ containing $\frakD_{in, \seed}^{\calA_{\text{prin}}}  $, and $\Dprin \setminus \frakD_{in, \seed}^{\calA_{\text{prin}}}  $ consists of only outgoing walls. 
$\frakD_{in, \seed}^{\calA_{\text{prin}}} $ is unique up to equivalent and it is equivalent to a scattering diagram all of whose walls $(\frakd, f_{\frakd})$ are of the form $\frakd \subseteq (n,0)^{\perp}$ for some $n \in N^+$, and $f_{\frakd}= (1+z^{\wpp^*(n,0)})^c$, for some $c $ a positive integer. In particular, all nonzero coefficients of $f_{\frakd}$ are positive integers.
\end{prop}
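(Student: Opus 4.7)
The existence of a consistent scattering diagram $\Dprin$ containing $\frakD_{in, \seed}^{\calA_{\text{prin}}}$, unique up to equivalence and with $\Dprin\setminus\frakD_{in, \seed}^{\calA_{\text{prin}}}$ consisting only of outgoing walls, is an immediate application of Theorem~\ref{th:KS} to the initial scattering diagram \eqref{eqn:construction}. The real content of the proposition lies entirely in the structural claim that one may choose a representative whose walls are supported in $(n,0)^{\perp}$ for some $n\in N^+$, with wall functions of the specific form $f_{\frakd}=(1+z^{\wpp^*(n,0)})^c$ for $c\in\Z_{>0}$.

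My plan is to run the order-by-order Kontsevich-Soibelman construction, grading by the $\frakm$-adic filtration with $\frakm=(z^{\wpp^*(e_i,0)})_{i=1}^n$. At each step, inconsistencies around codimension-two joints in $\mathrm{Sing}(\Dprin)$ are cancelled by adjoining new outgoing walls, and these corrections are computed as exponentials of commutators in the pro-nilpotent Lie algebra $\widetilde{\frakg}$ of derivations of $\Bbbk[[z^{\wpp^*(e_i,0)}]]$. Inductively, assume all walls through order $k$ lie in hyperplanes $(n,0)^{\perp}$ with $n\in N^+$. Since $\{(n_1,0),(n_2,0)\}=\{n_1,n_2\}$ produces no $M$-component, the Lie subalgebra spanned by generators attached to $(n,0)$ with $n\in N^+$ is closed under bracket (each bracket yields a generator attached to $n_1+n_2\in N^+$). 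Hence the new walls needed at order $k+1$ can again be placed in hyperplanes of the required form.

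After the induction, for each primitive $n\in N^+$ one collects all walls supported on the ray spanned by $\wpp^*(n,0)$ into a single wall; this is possible because wall-crossing automorphisms on parallel walls commute. The combined function is a formal power series in $z^{\wpp^*(n,0)}$ with constant term $1$, and the multiplicative group $1+t\Z[[t]]$ is freely generated as an abelian group by $\{1+t^j\}_{j\geq 1}$. Factoring accordingly and declaring each factor a separate wall in the same hyperplane (now with normal $jn$) produces a scattering diagram of the stated shape, with a priori integer exponents $c\in\Z$.

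The hardest step is showing these exponents are in fact \emph{nonnegative}. I would not attempt this directly but rather invoke the positivity theorem of \cite{ghkk}, which identifies $\Dprin$ with the image under the motivic Euler-characteristic integration map of Bridgeland's stability scattering diagram (Theorem~\ref{thm:Hallscattering}). Under that identification each exponent $c$ is realised as an Euler characteristic of a moduli stack of semistable quiver representations of the appropriate dimension vector, and hence a nonnegative integer. This positivity input is precisely the bridge between scattering diagrams and Hall algebras that the rest of the paper develops, so it is fitting that it be the deep ingredient here.
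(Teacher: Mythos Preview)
The paper does not supply a proof of this proposition; it is quoted from \cite{ghkk} as a known result, so there is no in-paper argument to compare against. Your reduction of existence and uniqueness to Theorem~\ref{th:KS} is correct, and your inductive argument that all walls produced by the Kontsevich--Soibelman algorithm lie in hyperplanes $(n,0)^\perp$ with $n\in N^+$ (via closure of the relevant Lie subalgebra under bracket), together with the factorisation of each wall function as $\prod_{j\geq 1}(1+z^{j\,\wpp^*(n_0,0)})^{c_j}$ with $c_j\in\Z$, is the standard route and is essentially sound.

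The genuine gap is in the positivity step. First, it is circular as written: the assertion that the exponents $c$ are positive integers \emph{is} the positivity theorem of \cite{ghkk} for wall functions, so invoking that theorem to establish it begs the question. Second, your gloss on how \cite{ghkk} obtains positivity is inaccurate: the identification of $\Dprin$ with Bridgeland's stability scattering diagram is the content of \cite{Bridge} (recorded here as Theorem~\ref{thm:Hallscattering}), not of \cite{ghkk}; the proof in \cite{ghkk} proceeds instead by a direct combinatorial induction (a change-of-lattice trick reducing to a local rank-two computation) with no Hall-algebra input. Third, even granting the Hall-algebra identification in the acyclic skew-symmetric case, the inference ``$c$ is an Euler characteristic of a moduli stack, hence a nonnegative integer'' is not valid without further work: Euler characteristics of Artin stacks, and even of varieties, can be negative, and showing nonnegativity of these particular numbers is precisely the hard content you are trying to outsource.
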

Note that in particular the function $f_{\frakd}$ attached to a wall $\frakd$ in $\frakD_{ \wseed}$ is a power series in $z^{(p_1^*(n), n)}$ for some $n \in N^+$.

In particular, given a seed data $s=(e_1, \dots, e_n)$, define
\begin{equation} \label{eq:positivechamber}
    \calC_s^+ := \calC^+ = \{ m \in M_{\R} | \langle e_i, m \geq 0 \rangle, i=1, \dots n\}
\end{equation}
We will call $\calC_s^+$ as the positive chamber. 


\subsubsection*{Example: Two-dimensional case} \label{sec:2Ddiag}
Let us illustrate what we have just defined in two dimensions explicitly. 
Consider $N \cong \Z^2$, i.e. $\wN \cong \Z^4$, $\wM \cong \Z^4$.
Consider the cluster algebra $\calA (b)$ defined in Example \ref{ex:2alg}. 
As we are going to work with skew-symmetric case, we have $b=c$. 
The exchange matrix $B$ in the initial seed of $\calA (b)$ would be 
$B=
\left(
\begin{array}{c c}
0 & b \\ -b & 0
\end{array}
\right)$.
The corresponding skew-symmetric form for $\wN$ would be
\[
\left(
\begin{array}{c c c c}
0 & b   & -1 &0\\ -b & 0 & 0 & -1 \\
1 & 0 & 0 & 0 \\ 0 & 1 & 0 & 0
\end{array}
\right).\]

Take $\seed = \big( (1,0), (0,1) \big) $, and then  $\wseed= \big( (1,0, 0, 0), (0,1, 0, 0), (0,0, 1, 0), (0,0, 0, 1)  \big) $. 
The associated scattering diagrams would lie in the 4-dimensional vector space $\wM_{\R} \cong M_{\R} \oplus N_{\R}$.
First, we have
\[
	\frakD_{in, \seed}^{\calA_{\text{prin}}} = \{ \big( (1,0,0,0)^{\perp} , 1+z^{(0,b,1,0)} \big), \big( (0,1,0,0)^{\perp} , 1+z^{(-b, 0, 0,1)} \big)\}.
\]
Then we would obtain the consistent scattering diagrams $\Dprin$ by Theorem \ref{th:KS}. 
To visualise the 4-dimensional picture, let us consider the projection $M_{\R} \oplus N_{\R} \rightarrow M_{\R}$, $(\textbf{m},\textbf{n}) \mapsto \textbf{m}$. By considering $b=1$, we obtain the standard scattering diagram for the $A_2 $ quiver: 
\begin{figure}[H]
  \centering
  \begin{tikzpicture}
    \draw (2,0) -- (-2,0) node[left] {$1+A_1^{-1}X_2$};
    \draw (0,-2) -- (0,2) node[above] {$1+A_2X_1$};
    \draw (0,0) -- (2,-2) node[below right] {$1+A_1^{-1}A_2X_1X_2$};
  \end{tikzpicture}
  \caption{The scattering diagram for $\calA (1)$.} 
  \label{fig:diagex}
\end{figure}
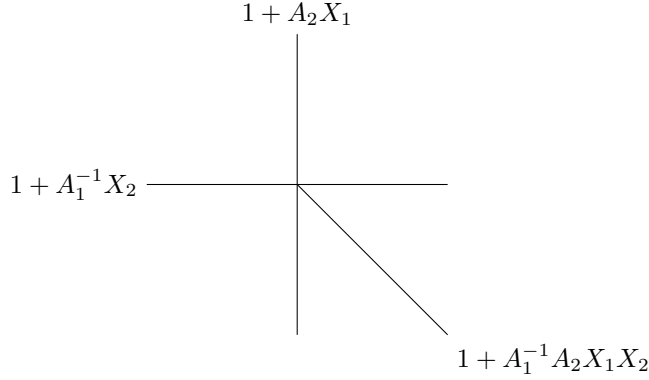

While this example portrays a scattering diagram with finitely many rays, the diagram for $\calA (b)$ will consist of an infinite number of rays precisely when $b^2\ge 4$. 
Figure \ref{fig:22case} illustrate the diagram for $\calA (2)$ which is with infinitely rays of slope $(k, -(k+1))$, $(k+1, -k)$ and $(1,1)$. In general if $b^2 \geq 4$ then there are infinitely many discrete outgoing rays converge to the ray of slopes $-(b \pm \sqrt{b^2-4})/ 2$. 
Within these two rational slopes, the set of rays of rational slope appear is dense.
This region is called the \emph{badlands}.

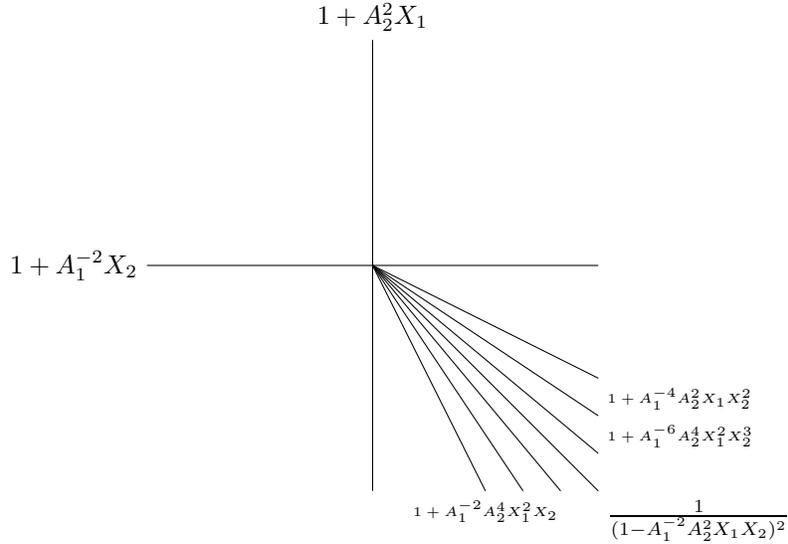
\begin{figure}[H]
  \centering
  \begin{tikzpicture}
    \draw (3,0) -- (-3,0) node[left] {$1+A_1^{-2}X_2$};
    \draw (0,-3) -- (0,3) node[above] {$1+A_2^2X_1$};
    \draw (0,0) -- (3,-3) node[below right] {$\frac{1}{(1-A_1^{-2}A_2^2X_1X_2)^2}$};
    \draw (0,0) -- (3,-1.5) node[below right] {\tiny $1+A_1^{-4}A_2^2X_1X_2^2$};
    \draw (0,0) -- (1.5,-3) node[below] {\tiny $1+A_1^{-2}A_2^4X_1^2X_2$};
    \draw (0,0) -- (3,-2) node[below right] {\tiny $1+A_1^{-6}A_2^4X_1^2X_2^3$};
    \draw (0,0) -- (2,-3) ;
    \draw (0,0) -- (3,-2.5)  ;
    \draw (0,0) -- (2.5,-3)  ;
  \end{tikzpicture}
  \caption{The scattering diagram for $\calA (2)$.} 
  \label{fig:22case}
\end{figure}

\subsection{Broken lines and theta functions} \label{sec:broken}

Broken lines were introduced in \cite{grossp2} as a way of describing holomorphic disks which appear in
mirror symmetry in a tropical manner. Their theory was further developed
in \cite{CPS}, and then used in \cite{GHKlog} and \cite{ghkk} to construct
canonical bases in various circumstances. 

\begin{definition} \label{brokendef}
Let $\frakD$ be a scattering diagram, $m \in \wM \setminus \{0\}$ and $\enpt \in \wM_{\R} \setminus \text{Supp}(\frakD)$. A \emph{broken line} for $m$ with endpoint $\enpt$ is a piecewise linear continuous proper path $\gamma : ( - \infty , 0 ) \rightarrow \wM_{\mathbb{R}} \setminus Sing (\frakD)$ with a finite number of domains of linearity and a collection of monomials. A monomial $c_L z^{m_L} \in \Bbbk[\wM]$ is attached to each domain of linearity $L \subseteq ( - \infty, 0)$ of $\gamma$. The path $\gamma$ and the monomial $c_L z^{\textbf{m}_L}$ need to satisfy the following conditions:
\begin{itemize}
    \item $\gamma(0) = \enpt$.
    \item If $L$ is the first (i.e., unbounded) domain of linearity of $\gamma$, then $c_L z^{\textbf{m}_L} = z^{\textbf{m}}$.
    \item For $t$ in a domain of linearity, $\gamma'(t) = -\textbf{m}_L$.
    \item $\gamma$ bends only when it crosses a wall. If $\gamma$ bends from the domain of linearity $L$ to $ L'$ when crossing $(\frakd, f_{\frakd})$, then $c_{L'}z^{\textbf{m}_{L'}}$ is a term in $\frakp_{\gamma, \frakd} (c_L z^{\textbf{m}_L})  $.
\end{itemize}
For a broken line $\gamma$ with initial slope $m$ and endpoint $\enpt$, denote $I(\gamma) = m$ and $b(\gamma) = \enpt$. Define
$\text{Mono} (\gamma) = c(\gamma)z^{F(\gamma)}$ to be the monomial $c_L z^{m_L}$ attached to the last domain of linearity $L$ of $\gamma$. 
We further define the theta functions as
\begin{equation} \label{eqn:thetaf}
 \vartheta_{\enpt, m} = \sum_{\gamma} \text{Mono} (\gamma),  
\end{equation}
where the sum is over all broken lines for $m$ with endpoint $\enpt$.
\end{definition}

The following summarizes the properties of the theta functions as shown in \cite{CPS} and \cite{ghkk}. 
\begin{prop} \label{thm:propoftheta}
      If $\fD$ is any consistent scattering diagram, $\enpt$ and $\enpt'$ are two general
      irrational points on $M_{\R} \smallsetminus$ Supp$(\fD)$, and $\gamma$ is a path joining $\enpt$ to $\enpt'$, then $\fp_{\gamma, \fD }(\vartheta_{\enpt,m}) = \vartheta_{\enpt', m}$. 
\end{prop}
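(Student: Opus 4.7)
The plan is to reduce to the case where $\gamma$ crosses a single wall at a generic point, set up a term-by-term correspondence between broken lines ending at $\enpt$ and at $\enpt'$ that matches the wall-crossing automorphism, and then take a limit to handle the completion.

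First I would factor $\gamma$ as a composition of short paths, each of which meets $\text{Supp}(\fD)$ transversally at a single generic point of one wall $\frakd$ away from $\text{Sing}(\fD)$, and truncate modulo $\frakm^k$ for fixed $k$ so that only finitely many broken lines with initial slope $m$ contribute. Since $\fp_{\gamma, \fD}$ is the composition of the single-wall automorphisms attached to these short paths by \eqref{eqn:path_ordered}, it suffices by induction on the number of wall crossings to prove the claim when $\gamma$ crosses only one wall $\frakd$; the identity in the completion then follows by letting $k \to \infty$.

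For this single-wall step I would set up the following correspondence between broken lines on the two sides of $\frakd$. Given a broken line $\gamma_{br}$ ending at $\enpt$ with terminal monomial $c_L z^{m_L}$, its last segment has velocity $-m_L$, and by genericity its straight continuation past $\enpt$ meets $\frakd$ at exactly one point before reaching $\enpt'$. At that forced crossing I either leave the monomial unchanged, producing a broken line ending at $\enpt'$ with terminal monomial $c_L z^{m_L}$, or introduce a bend and assign any nonconstant summand of $\fp_{\gamma, \frakd}(c_L z^{m_L})$ as the new terminal monomial. By the wall-crossing formula \eqref{eqn:pathordered} the two options together account for every term of $\fp_{\gamma, \frakd}(c_L z^{m_L})$. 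Conversely, every broken line ending at $\enpt'$ with initial slope $m$ must arise from this procedure exactly once: its terminal segment either avoids $\frakd$, in which case pulling the endpoint back to $\enpt$ gives the preimage, or meets $\frakd$, in which case deleting its final bending and extending the preceding segment to $\enpt$ does. Summing over broken lines yields $\fp_{\gamma, \frakd}(\vartheta_{\enpt, m}) = \vartheta_{\enpt', m}$ for the single-wall case, and composing along the factorization of $\gamma$ completes the argument.

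The main obstacle I anticipate is ruling out degenerate configurations that could either break the bijection or introduce spurious contributions: broken lines whose terminal segment is tangent to $\frakd$, whose earlier bending points would collide with $\text{Sing}(\fD)$ as the endpoint slides from $\enpt$ to $\enpt'$, or that acquire more than one crossing of $\frakd$ in their terminal leg. The finiteness guaranteed by truncating modulo $\frakm^k$ reduces these to finitely many combinatorial types, and the genericity of $\enpt, \enpt'$ together with choosing $\gamma$ to be a short transverse arc inside a tubular neighborhood of its single crossing of $\frakd$ is precisely what is needed to exclude all such pathologies.
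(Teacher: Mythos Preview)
The paper does not give its own proof of this proposition; it is stated as a summary of results from \cite{CPS} and \cite{ghkk}, so there is no in-paper argument to compare against. Your overall strategy --- factor $\gamma$ into single-wall crossings, truncate modulo $\frakm^k$, and set up a bijection between broken lines on the two sides that realizes $\fp_{\gamma,\frakd}$ term by term --- is precisely the standard argument in those references, and your identification of the genericity issues (tangencies, collisions with $\text{Sing}(\frakD)$) is on point.

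One imprecision is worth correcting. You write that the terminal segment of a broken line at $\enpt$, continued straight past $\enpt$ in direction $-m_L$, ``meets $\frakd$ at exactly one point before reaching $\enpt'$.'' But $\enpt'$ is a fixed generic point on the other side of $\frakd$ and will almost never lie on that ray; simply extending the last leg does not produce a broken line with endpoint $\enpt'$. The correct mechanism is to deform the \emph{entire} broken line as the endpoint slides from $\enpt$ to $\enpt'$: for a fixed combinatorial type (the ordered list of walls at which bends occur), the bending points move continuously along their respective walls so that the directions $-m_{L_i}$ are preserved and the path still terminates at the moving endpoint. When the endpoint crosses $\frakd$, the deformed terminal segment crosses $\frakd$, and at that crossing one either does not bend (the constant term of $\fp_{\gamma,\frakd}(c_L z^{m_L})$) or inserts a new final bend (one of the remaining terms). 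Conversely, broken lines at $\enpt'$ whose last bend lies on $\frakd$ have that bend removed under the inverse deformation. With this adjustment your bijection is well-defined and the rest of the sketch goes through.
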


\begin{prop} \label{thm:zm}
Now consider $\fD^{\cAp}$ a cluster scattering diagram.
If $\enpt$ and $m$ lie in the interior of the same chamber in the cluster complex of $\fD^{\cAp}$, then $\vartheta_{\enpt,m}=z^{m}$.
\end{prop}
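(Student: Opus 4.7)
Plan. The approach is to (i) exhibit a single broken line contributing $z^m$, and (ii) show it is the only one. A first reduction using Proposition \ref{thm:propoftheta}: the value $\vartheta_{\enpt,m}$ is constant as $\enpt$ ranges over $\mathrm{int}(\sigma)$, since any path joining two basepoints inside $\sigma$ can be chosen to avoid $\mathrm{Supp}(\fD^{\cAp})$ and hence has trivial path-ordered product. I may thus assume $\enpt$ has been chosen deep inside $\sigma$, for instance $\enpt = T m$ for large $T > 0$.

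For existence, I use that each chamber of the cluster complex is a convex rational polyhedral cone with apex at the origin. Since both $\enpt$ and $m$ lie in $\mathrm{int}(\sigma)$, convexity gives $\enpt + sm \in \mathrm{int}(\sigma)$ for all $s \geq 0$. Define $\gamma_0 : (-\infty, 0] \to \wM_{\R}$ by $\gamma_0(t) = \enpt - tm$ and attach the constant monomial $z^m$. Because the image of $\gamma_0$ lies in $\mathrm{int}(\sigma)$, it avoids $\mathrm{Supp}(\fD^{\cAp})$, so $\gamma_0$ is a broken line with $I(\gamma_0) = m$, $b(\gamma_0) = \enpt$ and $\mathrm{Mono}(\gamma_0) = z^m$, contributing the summand $z^m$.

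For uniqueness, suppose $\gamma$ is a broken line with $I(\gamma) = m$, $b(\gamma) = \enpt$ and at least one bend. The first linear piece of $\gamma$ runs off to infinity in direction $+m \in \mathrm{int}(\sigma)$, so by the same convexity argument its first bending point must lie on a wall bounding $\sigma$; symmetrically, the final linear piece enters $\enpt \in \mathrm{int}(\sigma)$ from direction $-m_L$, so the last bending point also lies on a wall bounding $\sigma$. Each intermediate crossing of a wall $\frakd \subset (n,0)^{\perp}$ of $\fD^{\cAp}$ changes the attached exponent by $j(p^*(n),n)$ for some integer $j \geq 1$ and $n \in N^+$, since by the structure theorem for $\fD^{\cAp}$ each $f_\frakd$ is a power series in $z^{(p^*(n),n)}$. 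Hence the final exponent has the form $m_L = m + \sum_{i=1}^{k} j_i\,(p^*(n_i), n_i)$ with $j_i \geq 1$ and $n_i \in N^+$; in particular the $N$-component of $m_L - m$ is a nonzero element of $N^+$.

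The main obstacle is to derive a contradiction from the existence of such a $\gamma$. My plan is to combine the positivity of $m_L - m$ in the $N$-direction with the geometric constraint that the final segment must traverse from a wall bounding $\sigma$ to $\enpt$ with velocity $-m_L$. Pairing $m_L$ with the $c$-vectors defining the facets of $\sigma$ (which are sign-coherent by \cite{ghkk} and related to the $g$-vectors of $\sigma$ via Theorem \ref{thm:tropduality}), the inequalities $\langle \enpt, (n_j,0)\rangle > 0$ coming from $\enpt \in \mathrm{int}(\sigma)$ together with the strict positivity of the $j_i$'s leave no way for the last segment simultaneously to emanate from a boundary wall of $\sigma$ and to point towards $\enpt$ along direction $-m_L$. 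This contradiction forces $k = 0$, so $\gamma = \gamma_0$ and $\vartheta_{\enpt, m} = z^m$.
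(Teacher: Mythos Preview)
The paper does not supply its own proof of this proposition; it records it as a known property of theta functions and cites \cite{CPS} and \cite{ghkk}. So there is no in-paper argument to compare against. That said, your attempt has a correct opening and a real gap at the end, which is worth spelling out.

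Your reduction via Proposition~\ref{thm:propoftheta} and the construction of the straight broken line $\gamma_0$ are fine: since $\sigma$ is a full-dimensional cone containing $m$ in its interior, the ray $\enpt + s m$ for $s\ge 0$ stays in $\mathrm{int}(\sigma)$ and avoids $\mathrm{Supp}(\fD^{\cAp})$. Likewise, the observation that every bend adds a term $j\,\wpp^*(n,0)=j\,(p^*(n),n)$ with $j\ge 1$ and $n\in N^+$ to the running exponent is correct, regardless of the sign of the crossing, because the wall functions lie in $1+z^{(p^*(n),n)}\,\Bbbk[[z^{(p^*(n),n)}]]$.

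The uniqueness argument, however, is not complete. Two specific issues:
\begin{itemize}
\item The assertion that ``the first bending point must lie on a wall bounding $\sigma$'' (and the symmetric statement for the last bending point) is unjustified. A broken line is allowed to cross walls without bending; the first linear piece may therefore exit $\sigma$ through a facet without bending and only later bend at a wall strictly outside $\overline{\sigma}$. Convexity of $\sigma$ tells you the unbounded ray is eventually inside $\sigma$, but it does not force the \emph{first bend} to occur on $\partial\sigma$.
\item The closing paragraph is a plan, not a proof. You invoke sign-coherence of $c$-vectors and Theorem~\ref{thm:tropduality}, but you never write down which pairing is being computed, nor why the resulting inequality is incompatible with the last segment hitting $\enpt$. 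Concretely: the $M$-component of $m_L$ is $m' + \sum_i j_i\,p^*(n_i)$, and pairing this with a $c$-vector $c$ of $\sigma$ gives $\langle c, m'\rangle + \sum_i j_i\{n_i,c\}$; the second sum has no fixed sign, so the contradiction does not fall out of positivity of the $j_i$ alone.
\end{itemize}

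In \cite{ghkk} the statement is obtained by first treating the positive chamber $\calC^+$ (where one has the extra leverage that \emph{every} $n\in N^+$ pairs positively with every $m'\in\mathrm{int}(\calC^+)$), and then transporting the result to an arbitrary cluster chamber via the piecewise-linear mutation maps that carry $\fD^{\cAp}_{\seed}$ to $\fD^{\cAp}_{\mu_k(\seed)}$. An alternative route is their broken-line convexity argument for cluster chambers. If you want to push your direct approach through for a general chamber, you will need one of these inputs; the raw positivity of the $N$-component of $m_L - m$ is not enough by itself.
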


The following proposition from \cite{ghkk} leads us the relation between theta functions and cluster monomials.

\begin{prop} \label{rmk:thetaiscluster}
If $\enpt$ lies inside the positive chamber, and $\textbf{m}\in \wM$ lies in one of the chambers which is reachable from the positive chamber, i.e. $\textbf{m} \in \calC^+ \in \Delta_s$ defined in \eqref{eq:positivechamber}. 
Then $\vartheta_{\enpt,m}=\frakp_{\gamma,\frakD}(z^{\textbf{m}})$ for the path $\gamma$ joining from chamber containing $m$ to $\enpt$. 
In this case, $\vartheta_{\enpt,\textbf{m}}$ is a cluster monomial \cite{ghkk}. 
Further if $\enpt$ is in the positive chamber, and $\vartheta_{\enpt, m}$ is a finite sum, then $\vartheta_{\enpt, m}$ is an element of the cluster algebra.

In particular, by combining Proposition \ref{thm:propoftheta} and Proposition \ref{thm:zm}, consider $\gamma$ the path which joins $\textbf{m}$ to a point $\enpt$ in the positive chamber $\calC_\textbf{s}^+$ by passing through finitely many chambers. Then we have
\begin{equation}
    \vartheta_{\enpt, \textbf{m}} = \frakp_{\gamma, \frakD} (z^{\textbf{m}}).
\end{equation}
\end{prop}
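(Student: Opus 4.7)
The plan is to read the statement as an essentially formal consequence of Propositions~\ref{thm:propoftheta} and \ref{thm:zm}, and then invoke the cluster-algebraic content of \cite{ghkk} for the last two sentences. First I would pick an auxiliary basepoint $\enpt_0$ in the interior of the same chamber of the cluster complex of $\frakD$ that contains $\textbf{m}$; such a basepoint exists because that chamber is full-dimensional and does not meet $\mathrm{Supp}(\frakD)$ in its interior. Since both $\enpt_0$ and $\textbf{m}$ lie in the interior of the same chamber, Proposition~\ref{thm:zm} gives $\vartheta_{\enpt_0, \textbf{m}} = z^{\textbf{m}}$; in particular there is a unique broken line contributing, namely the straight ray with monomial $z^{\textbf{m}}$.

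Next I would choose a smooth path $\gamma$ from $\enpt_0$ to $\enpt$ that passes through only the finitely many walls separating the chamber of $\textbf{m}$ from the positive chamber $\calC_\seed^+$, avoiding $\mathrm{Sing}(\frakD)$ and meeting each wall transversally. Applying Proposition~\ref{thm:propoftheta} then yields
\[
\vartheta_{\enpt, \textbf{m}} \;=\; \frakp_{\gamma, \frakD}(\vartheta_{\enpt_0, \textbf{m}}) \;=\; \frakp_{\gamma, \frakD}(z^{\textbf{m}}),
\]
which is exactly the identity claimed in the last displayed equation of the proposition. The intermediate wall-crossings are computed by the path-ordered product \eqref{eqn:pathordered}, and consistency of $\frakD$ (Definition~\ref{def:consistent}) ensures that $\frakp_{\gamma, \frakD}$ depends only on the endpoints $\enpt_0$ and $\enpt$, so the choice of $\gamma$ is immaterial.

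For the statement that $\vartheta_{\enpt, \textbf{m}}$ is a cluster monomial I would cite the corresponding result in \cite{ghkk}: the wall-crossing automorphisms of the cluster scattering diagram restricted to chambers of the cluster complex recover the $\calA$-cluster mutations, so $\frakp_{\gamma, \frakD}(z^{\textbf{m}})$ is precisely the expansion, in the initial cluster, of the cluster monomial indexed by $\textbf{m}$ in the target seed. For the final assertion, if in addition $\vartheta_{\enpt, \textbf{m}}$ is a finite sum (so it defines a genuine element of $\Bbbk[\wM]$ rather than a formal power series) and $\enpt$ is in the positive chamber, then the equality above expresses this element as a Laurent polynomial in the initial cluster variables $A_1, \dots, A_\n, X_1, \dots, X_\n$; by the previous sentence it equals a cluster monomial, hence lies in $\cAp$.

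The only non-routine point is the first one: identifying the path-ordered product $\frakp_{\gamma,\frakD}(z^{\textbf{m}})$ with a cluster monomial and checking that Proposition~\ref{thm:zm} applies even when $\textbf{m}$ lies on the boundary of a chamber. The former is the main content imported from \cite{ghkk}, and the latter is handled by replacing $\textbf{m}$ with a nearby interior point and using continuity of the broken-line sum with respect to the basepoint, or simply by noting that Proposition~\ref{thm:zm} is stated for $\textbf{m}$ in the interior of a chamber and that $\enpt$, not $\textbf{m}$, needs to be moved to be generic.
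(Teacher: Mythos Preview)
Your proposal is correct and follows essentially the same approach as the paper. The paper does not give a separate proof of this proposition; it simply cites \cite{ghkk} for the cluster-monomial and cluster-algebra assertions and indicates, in the last paragraph of the statement itself, that the displayed identity follows by combining Propositions~\ref{thm:propoftheta} and \ref{thm:zm}, which is exactly the argument you wrote out.
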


\begin{remark} \label{rk:cg}
We are going to see $g$ vectors and $c$ vectors are the generators and normals of the chambers of the scattering diagrams. Even though it is indicated in \cite{ghkk} already, we are giving a proof by employing idea from the cluster algebras world in \cite{cluster4} and \cite{nakanishi2012tropical}.

Fix $\textbf{m}_0 \in M$, and consider the theta function $\vartheta_{(\textbf{m}_0,\textbf{n})}$, where $\textbf{n} \in N$. 
One can see that the coefficients in the $\vartheta_{\textbf{m}_0,\textbf{n}}$ would remain the same for any $\textbf{n} \in N$. This means that the $F$ polynomials defined in \eqref{eq:F} only depend on $\textbf{m}_0$.  
Then we can consider the projection 
\begin{equation}
    \rho: \wM = M \oplus N \rightarrow M.
\end{equation} 
This gives us the $\calA$-theta functions which are the cluster monomials for cluster algebras without principal coefficients.
In particular, we consider $\vartheta_{\enpt, (\textbf{m}_0, 0)}$. It has the form
\[
\vartheta_{\enpt, (\textbf{m}_0, 0)} 
= z^{(\textbf{m}_0, 0)} F\big( z^{(\textbf{m}, \textbf{n})} \big),
\]
where $F$ is a Laurent polynomial with monomials of the form $z^{\wpp^*(n,0)} =  z^{(p^*(n),n)} $, $n \in N^+$.
This is exactly the expression as in \eqref{eq:gvector} if we set $X_i =1$ for all $i$.
Thus we can deduce $\textbf{m}_0$ is the $g$-vector for the cluster monomial $\vartheta_{\enpt, (\textbf{m}_0, 0)}$.
This is why scattering diagram are known as $g$-vector fan. 
By Theorem \ref{thm:tropduality} in \cite{nakanishi2012tropical}, we learnt that the $c$ and $g$ vectors are dual bases. 
So the $c$ vectors are the normals of the walls of the chambers. 
\end{remark}

We will show some computation of some broken lines and theta functions in the two dimensional scattering diagram in Section \ref{sec:2Ddiag}.

\begin{example} \label{ex:11case}
  \label{brokenex}
  Consider the scattering diagram $\fD_{(2)}$ in Figure \ref{fig:22case}
  and let $\enpt$ be a small irrational perturbation of the point $(1, -1.5, 0 , 0)$. 
  There are three broken lines with initial exponent $m = (1,-1, 0,0)$ and endpoint $\enpt$ as shown in
  Figure~\ref{figbrokenex}.
  First of all, we can have a broken line $\gamma_1$ which does not bend.
  Therefore
  \[
    \Mono(\gamma_1) = z^{ (1,-1, 0,0)} = A_1 A_2^{-1}.
  \]
  There is the broken line $\gamma_2$  which bends only at the $x$-axis. Since
  \[ 
    \frakp_{\gamma_2, (0,1,0,0)^{\perp}} (z^{ (1,-1, 0,0)}) = 
    z^{ (1,-1, 0,0)} \big(1+z^{ (-2,0, 0,1)} \big) =  A_1 A_2^{-1} + A_1^{-1} A_2^{-1}X_2,
  \]
  to bend we need to choose the second term and obtain 
  \[
    \Mono(\gamma_2) = z^{ (-1,-1,0,1)}= A_1^{-1} A_2^{-1}X_2.
  \]
  The last broken line $\gamma_3$ bends both at the $x$- and $y$-axes, the
  latter bend coming from
  \[ 
    \frakp_{ \gamma_3, (1,0,0,0)^{\perp}}  (z^{ (-1,-1,0,1)}) =  
    z^{ (-1,-1,0,1)} + z^{ (-1,1,1,1)}.  
  \]
  This time we have 
  \[
    \Mono (\gamma_3) = z^{ (-1,1,1,1)}= A_1^{-1} A_2X_1X_2.
  \]
  Thus the theta function associated to $m = (1,-1,0,0)$ with endpoint point $\enpt$ is 
  \[ 
    \vartheta_{\enpt, (1,-1,0,0)} =  
    A_1 A_2^{-1} +A_1^{-1} A_2^{-1}X_2  +A_1^{-1} A_2X_1X_2 .  
  \]
  Note that we can restrict the theta function to the 
  
  By considering $X_i =1$ for all $i$, we get $\vartheta^{\calA}_{\enpt, (1,-1)} := A_1 A_2^{-1} +A_1^{-1} A_2^{-1}  +A_1^{-1} A_2$.
  This is a projection to the scattering diagram for $\calA$ instead of $\cAp$.
  One can see that this is the cluster variable for the cluster algebra $\calA$ of type $A_2$.
\end{example}

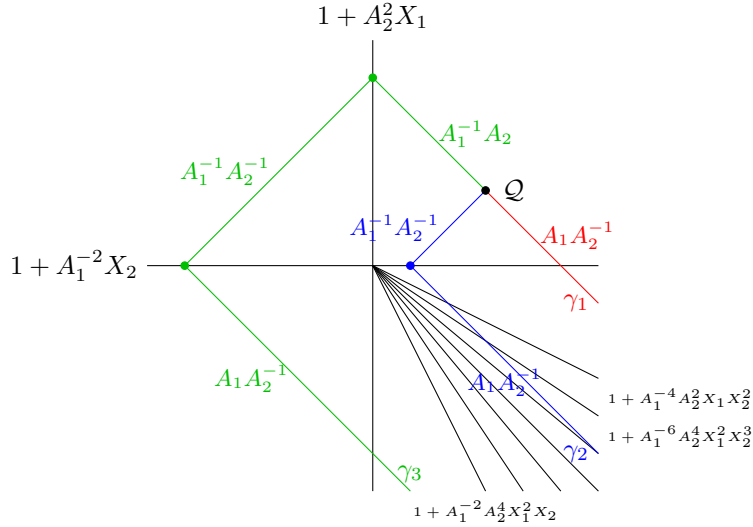
\begin{figure}
  \centering
  \begin{tikzpicture}
    \draw (3,0) -- (-3,0) node[left] {$1+A_1^{-2}X_2$};
    \draw (0,-3) -- (0,3) node[above] {$1+A_2^2X_1$};
    \draw (0,0) -- (3,-3) ;
    \draw (0,0) -- (3,-1.5) node[below right] {\tiny $1+A_1^{-4}A_2^2X_1X_2^2$};
    \draw (0,0) -- (1.5,-3) node[below] {\tiny $1+A_1^{-2}A_2^4X_1^2X_2$};
    \draw (0,0) -- (3,-2) node[below right] {\tiny $1+A_1^{-6}A_2^4X_1^2X_2^3$};
    \draw (0,0) -- (2,-3) ;
    \draw (0,0) -- (3,-2.5)  ;
    \draw (0,0) -- (2.5,-3)  ;
    \draw (1.5,1) node[circle, right] {$\enpt$};
    \draw[red] (3,-0.5) -- (1.5,1);
    \draw[red] (2.1,0.1) node[above right]{\small $A_1 A_2^{-1}$};
    \draw[red] (3,-0.5) node[left]{$\gamma_1$};
    \draw[blue] (0.5,0) -- node[left] {\small $A_1^{-1} A_2^{-1}$}  (1.5,1);
    \draw[blue] (0.5,0) -- node[below] {\small $A_1 A_2^{-1}$} (3,-2.5);
    \draw[blue] (3,-2.5) node[left]{$\gamma_2$};
    \draw[green!75!black] (0,2.5)  -- node[right] {\small $A_1^{-1}A_2$} (1.5,1);
    \draw[green!75!black] (-2.5,0) -- node[left] {\small $A_1^{-1} A_2^{-1}$}  (0,2.5);
    \draw[green!75!black] (0.5, -3)  --node[left] {\small $A_1 A_2^{-1}$} (-2.5,0);
    \draw[green!75!black] (0.5,-3) node[above]{$\gamma_3$};
    \draw[color=blue,fill=blue] (0.5,0) circle (0.5mm);
    \draw[color=green!75!black,fill=green!75!black] (0,2.5) circle (0.5mm);
    \draw[color=green!75!black,fill=green!75!black] (-2.5,0) circle (0.5mm);
    \draw[color=black,fill=black] (1.5,1) circle (0.5mm);
  \end{tikzpicture}
  \caption{The scattering diagram $\fD_{(2)}$ and the broken lines described in Example~\ref{brokenex}.} 
  \label{figbrokenex}
\end{figure}

\begin{example}
  Let us try one more calculation with broken lines. We take the same scattering diagram $\frakD_{(2)}$ in Figure \ref{fig:22case}. 
  Now take the initial exponent $\textbf{m}=(2,-2, -1,-1)$. 
  This time $\enpt$ would a small irrational perturbation of the point $(1, -1.5, 1 , -1.5)$ in the subspace $\{ (m,n) \in \wM | \, m = p^*(n) \}$.
  Note that after the projection $M \oplus N \rightarrow N$, the endpoint would still be  a small irrational perturbation of the point $(1, -1.5)$.
  By similar calculations we get 
  \[ 
    \vartheta _{\enpt, (2,-2, -1,-1)} = 
    A_1^2 A_2^{-2}X_1^{-1}X_2^{-1} + A_1^{-2}A_2^2 X_1^{1}X_2^{1}+ A_1^{-2}A_2^{-2}X_1^{-1}X_2^{1} + 2 A_2^{-2}X_1^{-1} + 2A_1^{-2}X_2.  
  \]
  We can again put $X_i =1$ and obtain $\vartheta^{\calA}_{\enpt, (2,-2)} = A_1^2 A_2^{-2} + A_1^{-2}A_2^2 + A_1^{-2}A_2^{-2} + 2 A_2^{-2} + 2A_1^{-2}$.
  Note that 
	\begin{equation} \label{eqn:notcc}
	\vartheta^{\calA}_{\enpt, (2,-2)} = 
    \left(\vartheta^{\calA}_{\enpt, (1,-1)}\right) ^2 -2. 
	\end{equation}
  Note that $\vartheta^{\calA}_{\enpt, (2,-2)}$ is not a cluster monomial. The above equation gives interesting property for theta functions. 
In this example, one can have a sense of how the $\calX$-theta functions are defined.
Consider the slice $\{(\textbf{m}, \textbf{n}) | \textbf{m} = p^*(\textbf{n}) \}$ in $\Dprin$. Note that by the choice of the endpoint $\enpt$, all the broken lines with initial slope $\textbf{m}=(2,-2, -1,-1) $ and endpoint $\enpt$ lie on the slice. Thus one can consider a change of variable $z^{(p^*(n), n)} \mapsto z^n$ then we get $\theta^{\calX}_{(-1,-1)} =  X_1^{-1}X_2^{-1} +  X_1^{1}X_2^{1}+ X_1^{-1}X_2^{1} + 2 X_1^{-1} + 2X_2$.
$\calX$ theta functions are regular functions on the $\calX$ cluster varieties.
For more details about $\calX$-theta functions, one should consult \cite{ghkk} or \cite{note}.
\end{example}

\section{The stability scattering diagram} \label{ch:chapter_motivic}

In this section, we will survey the link between stability conditions and scattering diagram due to Bridgeland in \cite{Bridge}. 
We will further interpret wall crossing in terms of Hall algebra multiplication defined in \eqref{eqn:hallmult}.

\subsection{Hall algebra} \label{sec:Hallalg}

We are going to state the definitions of the motivic Hall algebra developed by Joyce \cite{joyce2007configurations}, the groups $\hat{G}_{Hall}$, $\hat{G}_{reg}$, and the Lie algebras $\frakg_{Hall}$, $\frakg_{reg}$.
Naively speaking, elements of $\hat{G}_{Hall}$, $\frakg_{Hall}$ are stacks while 
elements of $\hat{G}_{reg}$, $\frakg_{reg}$ are varieties.
Then the stability scattering diagram is defined to take values in $\frakg_{Hall}$.
The reason for the lengthy definition is that we are working with stacks. 
However, by a deep theorem of Joyce (Theorem \ref{thm:joyce}), 
there exists an element in $\hat{G}_{reg}$ associated to semistable objects. 
Thus, we are actually working with $\frakg_{reg}$.

We would focus on acyclic quiver $Q$ from now on.
Set $N= \Z^{Q_0}$, $M = \Hom_{\Z} (N, \Z)$, $M_{\R} = M \otimes_{\Z} \R$.
Denote $N^{\oplus} = \{ n \in N | n_i \geq 0 \ \forall  i\}$.
Write $\C Q$ as the path algebra of $Q$. 
Let $\rep (Q) = \bmod \C Q$ the abelian category of finite dimensional representations of $Q$. Let $(e_i)_{i \in Q_0}$ be the canonical basis indexed by the vertices of $Q$.

To have a consistent story, we should consider the cluster algebras with principal coefficients, i.e. we consider the quiver $\wQ$ associated to $Q$, where $\wQ$ is defined by adding an frozen vertex $i'$ to a vertex $i$ in $Q$ and an arrow from $i'$ to $i$ (since we are considering opposite quiver for quiver representations). 
Then $\rep(Q)$ is a subcategory of $\rep(\wQ)$.
A representation $D$ of $Q$ with dimension vector \textbf{d}$\in N$ would then be represented as $(\textbf{d},\textbf{0}) \in \wN = N \oplus M$, where \textbf{0} is the zero vector with $n$ entries.

Define the algebraic stack $\calM$ parametrizing all objects of the category $\rep (Q)$ as a fibered category over the category of the schemes. The objects of $\calM$ over a scheme $S$ are pairs $(\calE, \rho)$ where $\calE$ is a locally free $\calO_S$-module of finite rank, and $\rho: \C Q \to \End_S (\calE)$ is an algebra homomorphism. Let $\St /\calM$ denote the full subcategory consisting of objects $f: X \to \calM$ for which $X$ is an algebraic stack of finite type over $\C$ and has affine stabilizers. 

\begin{definition}
The Grothendick group $K (\St/ \calM)$ of stacks over $\calM$ is the free abelian group with basis given by isomorphism classes of objects of $\St / \calM$, modulo the subgroup spanned by the relations
\begin{itemize}
	\item for every object $f: X \to \calM$ in $\St / \calM$, and every closed substack $Y \subset X$ with complementary open substack $U = X \setminus Y $, we have $[ X\xrightarrow{f} \calM] = [Y \xrightarrow{f|_Y} \calM ] + [ U \xrightarrow{f|_U} \calM]$;
	\item for every object in $\St / \calM$, and every pair of morphisms $h_1: Y_1 \to X$, $h_2: Y_2 \to X$ which are locally trivial fibrations in the Zariski topology with the same fibres, we have $[Y_1 \xrightarrow{ g \circ h_1} \calM ] = [Y_2 \xrightarrow{g \circ h_2} \calM]$. 
\end{itemize}

\end{definition}

$K(\St/ \calM)$ has the structure of a $K(\St / \C)$-module, defined by setting 
\[[X]\cdot [Y \xrightarrow{f} \calM] = [X \times Y \xrightarrow{f \circ \pi_2} \calM]\]
and extending linearly. There is a unique ring homomorphism 
\[
\Upsilon : K(\St / \C) \to \C (q)
\]
which takes the class of a smooth projective variety $X$ over $\C$ to the Poincar\'{e} polynomial 
\[
\Upsilon ([X]) = \sum_{i=0}^{2d} \dim _{\C} H^i (X_{an}, \C) \cdot q^{i/2} \in \C[q],
\]
where $X_{an}$ denotes $X$ considered as a smooth projective variety, and $H^i(X_{an}, \C)$ denotes singular cohomology. Note that we work with $\C (q)$, where $q=t^2$ in the setting of \cite{Bridge}.

Define the $\C (q)$ vector space 
\[K_{\Upsilon} (\St/ \calM) = K(\St/ \calM) \otimes_{K(\St/ \C)} \C (q)\]
with the relation $[X \times Y \xrightarrow{f \circ \pi_2} \calM ] = \Upsilon ([X]) \cdot [Y \xrightarrow{f} \calM]$. 


Next,
let us equip $K( \St / \calM)$ with a ring structure.
Denote $\calM^{(2)}$ be the stack of short exact sequences in $\rep (Q)$. The objects of $\calM^{(2)}$ over a scheme $S$ consist of three pairs $(\calE_i, \rho_i)$ of $\calM (S)$ for $i = 1,2,3$, together with morphisms $\alpha$ and $ \beta$ of $\calO_S$-modules which define a short exact sequence 
$0 \to \calE_1 \xrightarrow{ \alpha} \calE_2 \xrightarrow{ \beta} \calE_3 \to 0$.
There is a diagram
\[\begin{tikzcd}
	\calM^{(2)} \arrow[r, "b"] \arrow[d, "{(a_1, a_2)}"] & \calM \\
	\calM \times \calM & 
\end{tikzcd}\]
where 
\[b([0 \raw A_1 \raw B \raw A_2 \raw 0 ]) =B,\]
\[a_i ([0 \raw A_1 \raw B \raw A_2 \raw 0 ]) = A_i \text{, for }i =1,2. \]

Then the multiplication on $K(\St / \calM)$ is defined as
\begin{equation} \label{eqn:hallmult}
[ X_1 \xrightarrow{f_1} \calM ] \star [X_2 \xrightarrow{f_2} \calM ] =[ Z \xrightarrow{b \circ h} \calM],
\end{equation}
where $Z$ and $h$ are defined by
\[\begin{tikzcd}
Z              \arrow[d] \arrow[r, "h"]    & \calM^{(2)} \arrow[d, "{(a_1, a_2)}"] \arrow[r, "b"] & \calM \\
X_1 \times X_2 \arrow[r, "f_1 \times f_2"] & \calM \times \calM                               &
\end{tikzcd}\]
where the square is Cartesian.
Then $K(\St / \calM)$ becomes a ring. As the multiplication operation is $K( \St/ \C)$-linear,
it defines an algebra structure on the $\C (q) $-vector space $K_{\Upsilon}( \St / \calM)$. 
Then we get a $\C (q)$-algebra $H(Q)$. 

The stack $\calM$ can be decomposed as a disjoint union
\[\calM = \coprod_{d \in N^{\oplus}} \calM(d),\]
where $\calM(d)$ parameterize representations of $Q$ of the fixed dimension vector $d$. 
This induces a grading $H(Q) = \bigoplus_{d \in N^{\oplus}} H(Q)_d$, where $H(Q)_d = K_{\Upsilon} (\St / \calM_d)$. 
Note that 
\[H(Q)_0 = \C (q) \cdot 1 = [\calM_0 \subset \calM].\]
So we have a vector space decomposition $H(Q) = H(Q)_0 \oplus H(Q)_{>0}$, where $H(Q)_{>0} = \bigoplus_{d \in N^+} H(Q)_d$. 

We can further consider $H(Q)_{>k} = \oplus_{\dim(n) >k} H(Q)_n$ for each $k \in \N$. Then we have the quotient $H(Q)_{\leq k} = H(Q) / H(Q)_{> k}$. By taking limit, we obtain the completed algebra
\[
	\hat{H} (Q) = \underleftarrow{\lim} H_{\leq k}(Q).
\] 
Define $\frakg_{Hall} = H(Q)_{>0} $. Then we can obtain the corresponding completed Lie subalgebra
\[
	\hat{\frakg}_{Hall} = \hat{H}(Q)_{>0} \subset \hat{H}(Q)
\]
and the corresponding pro-unipotent group $\hat{G}_{Hall}$. 
By consider the embedding $\phi : \hat{G}_{Hall} \raw \hat{H}(Q)$, 
we can identify 
\[
	\hat{G}_{Hall} \cong 1 +\hat{H}(Q)_{>0} \subset \hat{H}(Q).
\]
Through this identification, $\hat{G}_{Hall}$ can act on $ \hat{H}(Q)$ by conjugation.

Objects in $H(Q)$ are of the form $f: X \to \calM$ such that $X$ is an algebraic stack of finite type over $\C$ and has affine stabilizers. Now we restrict $X$ to be a variety and consider the $\C[q, q^{-1}]$-submodule $H_{reg}(Q)$ generated by those objects.
By \cite[Theorem 5.2]{Bridge}, $H_{reg} (Q)$ is closed under the Hall algebra product and is then a $\C[q, q^{-1}]$-algebra.

By repeating the similar grading decomposition as above,
we obtain the Lie subalgebra $H_{reg} (Q)_{>0}$ of $H_{reg}(Q)$.
Define
\[
	\frakg_{reg} = (q-1)^{-1} H_{reg}(Q)_{>0} \subset \frakg_{Hall}.
\]
Then we take completion again and have the Lie algebra 
$ \hat{\frakg}_{reg} = (q-1)^{-1} \hat{H}_{reg}(Q)_{>0}$
and the corresponding pro-unipotent group $\hat{G}_{reg} \subset \hat{G}_{Hall}$. 

\subsection{Stability conditions and scattering diagrams} \label{sec:stability}

Now let us define the relevant notion of stability condition.

\begin{definition}
Given $\stab \in \wM_{\R}$, an object $C \in \rep(Q)$ is said to be \emph{$\stab$-semistable} if 
\begin{itemize}
	\item $\stab(C) = 0$, 
	\item every subobject $B \subset E$ satisfies $\stab(B) \leq 0$.
\end{itemize}
\end{definition}

By the definition of Grothendick group, we can identify elements in $K(\rep (Q))$ with its dimension vector,
 so we have $N \cong K(\rep (Q))$.
Now given a fixed $\stab \in M_{\R}$, we can define a stability function $Z: K(\rep (Q)) \to \C$ as $Z(C) = -\stab (C) + i \delta (C)$, where $\delta = (1, \dots, 1)$, i.e. $\delta (C) = \dim (C)$.
Note that if $C \neq 0$, $Z(C)$ lies on the upper half plane. So we can define the \emph{phase} of $C$ by 
\[\phi  (C) = \frac{1}{\pi} \arg Z(C).\]
Then an object $0 \neq C \in \rep (Q)$ is \emph{$Z$-semistable} if every nonzero subobject $B \subset C$ satisfies $\phi (B) \leq \phi (C)$.
Note that $0 \neq C \in \rep (Q)$ is $\stab$-semistable precisely if it is $Z$-semistable with phase 1/2.

For every $ 0 \neq C \in \rep (Q)$, there exists a Harder--Narasimhan filtration
$ 0 = C_0 \subset C_1 \subset \cdots \subset C_{k-1} \subset C_k =C$
whose factors $F_i = C_i / C_{i-1}$ are $Z$-semistable with descending phase: 
$\phi (F_1) > \phi (F_2) > \cdots > \phi (F_k)$. 

Now for any interval $I \subset (0,1)$ there is an open substack $\calM_I = \calM_I (\stab) \subset \calM$ parameterising representations of $Q$ lying in $\calP (I) \subset \rep (Q)$, where $\calP(I)$ is the full subcategory consisting of objects whose Harder Narasimhan factors have phases in $I$. This defines a corresponding element 
\[1_I (\stab) =[\calM_I (\stab) \subseteq \calM ] \in \hat{G}_{Hall} \subseteq \hat{H}(Q).\]
In particular, if $I = \{ 1/2 \}$, by the remark above, $0 \neq C \in \rep (Q)$ is $\stab$-semistable precisely if it is $Z$-semistable with phase $1/2$, then $1_{1/2} (\stab) = 1_{ss} (\stab)$. 

After all these set up, we can finally state a result of Bridgeland about the linkage between stability conditions and scattering diagrams.

\begin{theorem} \cite{Bridge} \label{thm:Hallscattering}
If $p^*$ is injective, there exists a consistent scattering diagram $\frakD$ in $M_{\R}$ such that: 
\begin{enumerate}
	\item the support $\frakd$ consists of maps $\stab \in M_{\R} = \Hom(N, \Z) \otimes \R$ for which there exist $\stab$-semistable objects in $\rep(Q)$; 
	\item the wall-crossing automorphism at a general point $\stab \in \frakd \subset \supp(\frakD)$ is 
$\Phi_{\frakD} (\frakd) = 1_{ss} (\stab) \in \hat{G}_{Hall}$.
\end{enumerate}
The scattering diagram is unique up to equivalence. It is called the \emph{stability scattering diagram}.

The stability scattering diagram is equivalence to the cluster scattering diagram in \cite{ghkk}.
\end{theorem}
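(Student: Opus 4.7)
The plan is to follow Bridgeland's strategy in \cite{Bridge}: construct walls directly from moduli of semistable representations, attach Hall-algebra wall functions built from the classes $1_{ss}(\stab)$, and verify consistency using the Harder--Narasimhan filtration identity in $\hat{H}(Q)$. For each class $n \in N^+$ I would form the potential wall
\[
W_n = \{\stab \in M_{\R} \mid \text{there exists a } \stab\text{-semistable object of class } n\} \subset n^{\perp}.
\]
A variation-of-stability argument shows $W_n$ is a union of rational polyhedral cones of codimension one in $n^{\perp}$, and each top-dimensional face of each $W_n$ is declared to be a wall $\frakd$. The associated wall function is obtained by gathering all positive multiples $kn$ that are semistable on $\frakd$ and taking the logarithm of $\prod_{k\ge 1} 1_{ss}(\stab)|_{\text{class } kn}$ in $\hat{\frakg}_{Hall}$, so that after exponentiation $\Phi_\frakD(\frakd) = 1_{ss}(\stab)$. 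Joyce's no-pole theorem (cf.\ Theorem~5.2 of \cite{Bridge}) ensures this logarithm lies in the regular subalgebra $\hat{\frakg}_{reg}$, so that the resulting wall function is a well-defined power series in $z^{p^*(n)}$ of the form demanded by the scattering-diagram definition.

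The next step is to prove consistency. For any open interval $I \subset (0,1)$ and any stability condition $\stab$, the Harder--Narasimhan filtration gives the identity
\[
1_I(\stab) = \prod^{\to}_{\phi \in I} 1_{ss}(\stab, \phi),
\]
with factors in decreasing order of phase, in $\hat{H}(Q)$. Crucially, $1_I(\stab)$ depends on $\stab$ only through the ambient subcategory $\calP(I)$, not through the detailed internal order of phases inside $I$. Let $\gamma$ be a small loop around a codimension-two stratum of $\mathrm{Sing}(\frakD)$ meeting finitely many walls $\frakd_1, \dots, \frakd_s$ whose corresponding classes all have phase in a small interval $I$ around $1/2$. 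Traversing $\gamma$ reorders the semistable factors in the product above, but the total element $1_I(\stab)$ on the left is preserved, so the composition of wall-crossings $\frakp_{\gamma, \frakD}$ equals the identity. This is exactly the consistency condition for $\frakD$.

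Uniqueness up to equivalence is then automatic from Theorem~\ref{th:KS}. To identify $\frakD$ with the cluster scattering diagram $\Dprin$ of \cite{ghkk}, observe that the only incoming walls in $\frakD$ are those supported on $e_i^{\perp}$, coming from the simple representation $S(i)$ with wall function $1 + z^{p^*(e_i)}$; this is precisely the initial data \eqref{eqn:construction} for $\Dprin$. Since, by Theorem~\ref{th:KS}, a consistent scattering diagram is determined up to equivalence by its incoming walls (all other walls being outgoing and produced uniquely by the scattering process), the two diagrams must coincide.

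The main obstacle is twofold. First, showing that $W_n$ is genuinely codimension-one in $n^{\perp}$ rather than all of $n^{\perp}$ uses crucially the injectivity of $p^*$ in the hypothesis; without this one loses the notion of a ``generic'' stability condition for which the wall function makes sense. Second, and more delicate, one must verify that the element $1_{ss}(\stab)$, built from moduli stacks of semistable representations that are in general neither separated nor Deligne--Mumford, admits a logarithm in $\hat{\frakg}_{reg}$ so that the wall function has the required structure. This is the content of the integration-map constructions of Joyce (\cite{joyce2007configurations}) that underpin the approach in \cite{Bridge}, and it is where the bulk of the genuine technical work would be concentrated.
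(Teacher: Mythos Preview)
The paper does not actually prove this theorem: it is quoted as a result of Bridgeland \cite{Bridge} and no proof is supplied, so there is nothing in the paper to compare your argument against line by line. That said, your sketch does follow the architecture of Bridgeland's original proof (walls from loci of semistable objects, wall-crossing elements $1_{ss}(\stab)$, consistency via the Harder--Narasimhan product identity, uniqueness from Theorem~\ref{th:KS}), and the first two-thirds of your outline are an accurate summary of how that argument runs.

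There is, however, a genuine gap in your final step, the identification with the cluster scattering diagram of \cite{ghkk}. The stability scattering diagram takes values in $\hat{G}_{Hall}$: the wall function attached to $e_i^\perp$ is the Hall-algebra element $1_{ss}(\stab)$ for the simple $S(i)$, not the polynomial $1+z^{p^*(e_i)}$. The cluster scattering diagram $\Dprin$ lives in a completely different group, namely the tropical vertex group acting on $\Bbbk[[z^{p^*(e_i)}]]$. You cannot match incoming walls and invoke Theorem~\ref{th:KS} directly, because that uniqueness statement applies only to two diagrams valued in the \emph{same} pro-nilpotent Lie algebra. What is actually needed is the integration map (the Euler-characteristic homomorphism $\chi$ of \eqref{eqn:chi1}, built from Joyce's no-pole theorem) from $\hat{\frakg}_{reg}$ to the Lie algebra underlying the cluster scattering diagram, together with a check that this map takes the Hall wall functions to the cluster ones on the initial walls and intertwines the two scattering structures. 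This is precisely what the paper alludes to after the statement of the theorem when it cites \cite[Lemmas 11.4 and 11.5]{Bridge}. Your remark that Joyce's theorem is ``where the bulk of the genuine technical work would be concentrated'' is correct, but you have placed that work only in the well-definedness of the wall functions, when in fact it is equally essential for the comparison step.

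A smaller point: your explanation of why injectivity of $p^*$ is needed is not quite on target. Its role is not to cut the wall loci $W_n$ down to codimension one, but rather to ensure that distinct $n\in N^+$ give distinct exponents $p^*(n)$, so that the Lie algebra is $N^+$-graded and the scattering formalism (in particular Theorem~\ref{th:KS}) applies.
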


Bridgeland further \cite{Bridge} shows that there is a bijection between equivalence classes of consistent $\hat{\mathfrak{g}}_{Hall}$-complexes and elements of the group $\hat{G}_{Hall}$. In this case, the stability scattering diagram $\frakD$ corresponds to $\Phi_{\frakD} = 1_{\rep(Q)} \in \hat{G}_{Hall}$. 
Let us recall a theorem of Joyce
\begin{theorem} \label{thm:joyce}
For any $\stab \in M_{\R}$ the element of $\hat{G} \subset \hat{H}(Q)$ defined by the inclusion of the open substack of $\stab$-semistable objects $\calM_{ss}(\stab) \subset \calM$ corresponds to an element $1_{ss}(\stab) \in \hat{G}_{reg}$.
\end{theorem}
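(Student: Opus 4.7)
The plan is to realize $1_{ss}(\stab)$ as the exponential of a Lie algebra element and to identify that element as lying in the regular Lie subalgebra $\hat{\frakg}_{reg}$; then $1_{ss}(\stab) = \exp(\log 1_{ss}(\stab))$ automatically lies in $\hat{G}_{reg}$.

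First, every object in $\rep(Q)$ has a Harder--Narasimhan filtration with respect to $\stab$, and every $\stab$-semistable object has a Jordan--H\"older filtration into $\stab$-stable objects of phase $1/2$. Translating these filtrations into the Hall algebra product \eqref{eqn:hallmult} gives a multiplicative decomposition of the characteristic element $1_{\rep(Q)}$ and of $1_{ss}(\stab)$ into iterated products of characteristic elements of strata. Since $\hat{G}_{Hall}$ is pro-unipotent with Lie algebra $\hat{\frakg}_{Hall}$, we may form
\[
\epsilon_{ss}(\stab) := \log\bigl(1_{ss}(\stab)\bigr) \in \hat{\frakg}_{Hall},
\]
and by Baker--Campbell--Hausdorff this logarithm is computed by an explicit alternating sum over ordered chains of dimension vectors summing to $d$, where each term is a class $[X \to \calM_d]$ with $X$ a stack parametrizing filtrations whose subquotients are $\stab$-semistable of phase $1/2$. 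This is Joyce's inversion formula.

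Second, I would check that the exponential/logarithm map exchanges the subalgebra $\hat{\frakg}_{reg} = (q-1)^{-1} \hat{H}_{reg}(Q)_{>0}$ with the subgroup $\hat{G}_{reg}$, so that the theorem reduces to the claim $\epsilon_{ss}(\stab) \in \hat{\frakg}_{reg}$. Equivalently, one must show that $(q-1)\,\epsilon_{ss}(\stab)$ belongs to the $\C[q,q^{\pm 1}]$-submodule $\hat{H}_{reg}(Q)_{>0}$ generated by morphisms from honest varieties of finite type, rather than merely from stacks.

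Third, and this is the main obstacle, one must prove Joyce's \emph{no-poles theorem}: although the configuration stacks appearing in the expansion of $\epsilon_{ss}(\stab)$ are $\G_m$-gerbes (because each representation in $\rep(Q)$ has scalar automorphisms), so that their motives individually carry a factor of $1/(q-1)$, and although iterated filtrations introduce higher powers of $1/(q-1)$, the alternating combinatorial sum in the inversion formula produces cancellations that leave only a simple pole at $q=1$. The strategy is to rewrite the configuration stack at each step as a $\G_m$-quotient of a moduli variety of short exact sequences with prescribed subquotients, check that the quotient descends to a variety class in $H_{reg}(Q)$, and organize the alternating sum so that contributions with strict inequalities in the phase chain kill off terms contributing higher-order poles. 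This is a purely motivic computation reducing to a statement about motives of flag varieties attached to the Jordan--H\"older decomposition.

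Once the no-poles theorem is established, $\epsilon_{ss}(\stab) \in \hat{\frakg}_{reg}$ and thus $1_{ss}(\stab) = \exp(\epsilon_{ss}(\stab)) \in \hat{G}_{reg}$, completing the proof.
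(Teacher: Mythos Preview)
The paper does not prove this theorem at all. It is introduced with the phrase ``Let us recall a theorem of Joyce'' and is simply cited as a deep input from \cite{joyce2007configurations}; no argument is given. So there is nothing to compare your proposal against in the paper's own text.

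That said, your outline is a faithful sketch of how Joyce's result is actually proved: one passes to the logarithm $\epsilon_{ss}(\stab)=\log 1_{ss}(\stab)$, expresses it via Joyce's inversion formula as an alternating sum over flag-type configuration stacks, and then invokes the no-poles theorem to show the combinatorial cancellations leave only a simple pole at $q=1$, so that $(q-1)\epsilon_{ss}(\stab)\in \hat{H}_{reg}(Q)_{>0}$. Be aware that the no-poles step is genuinely hard and is not something one can reconstruct from the setup in this paper; it requires the full machinery of Joyce's configuration-counting papers (or Bridgeland's reformulation). Your proposal correctly flags this as the main obstacle rather than claiming to dispatch it.
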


Travis Mandel and the author have given a more explicit construction of stability scattering diagram in \cite{cheung2019donaldson} by using idea from \cite{GrossPandharipandeSiebert10}. 

One should note that the condition $p^*$ being injective is not `necessary'. We can again consider the cluster algebras with principal coefficients, i.e. we consider the quiver $\widetilde{Q}$ to any acyclic quiver $Q$, where $\widetilde{Q}$ is defined by adding an frozen vertex $i'$ to a vertex $i$ in $Q$ and an arrow from $i'$ to $i$ (since we are considering opposite quiver for quiver representations). 
We can consider $\rep(Q)$ as a subcategory of $\rep(\widetilde{Q})$ to construct the Hall algebra. 
In this setting, the scattering diagram $\frakD^{\calA_{\text{prin}}}$ would lie in $\wM$. 

\subsection{Wall crossing and Hall algebra theta functions} \label{sec:wallcrossing}
In this section, we will have a closer look at the wall crossing automorphism.
Before that, let us relate stability scattering diagram with torsion pair defined by $\stab$ in $\rep (Q)$.

\begin{definition} \label{def:torsion}
A \emph{torsion pair} in $\rep(Q)$ is defined to be a pair of full additive subcategories $(\calT, \calF)$ of $\rep(Q)$ such that
\begin{itemize}
\item if $T \in \calT$ and $F \in \calF$, then $\Hom_{\rep(Q)} (T, F) = 0$
\item for any $C \in \calA$ there is a short exact sequence
\[ 0 \raw T \raw C \raw F \raw 0 \]
with $T \in \calT$ and $F \in \calF$.
\end{itemize}
\end{definition}

A torsion pair always exists by the following theorem. 

\begin{lemma} \cite[Lemma 6.6]{Bridge} \label{thm:torsionpair}
For each $\stab \in \wM_{\R}$ there is a torsion pair $(\mathcal{T}({\stab}), \mathcal{F}({\stab})) \subset \rep(Q)$ defined by setting
\begin{align*}
\mathcal{T}(\stab) & = \mathcal{P} (1/2,1) \\
	& = \{ C \in \rep(Q): \text{any quotient object } C \rightarrow D \text{ satisfies } \stab(D)>0 \} \\
	\mathcal{F}(\stab)  	 & = \mathcal{P} (0,1/2] \\
	 &=  \{C \in \rep(Q) : \text{any subobject } F \subset C \text{ satisfies } \stab(F) \leq 0\}
\end{align*}
\end{lemma}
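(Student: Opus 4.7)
The plan is to exploit the Harder--Narasimhan machinery set up via the stability function $Z(C) = -\stab(C) + i\dim(C)$. Since $\rep(Q)$ is noetherian and artinian (finite-dimensional representations of an acyclic quiver), every nonzero object admits a unique HN filtration with $Z$-semistable factors of strictly decreasing phases, and this is precisely what defines the subcategories $\mathcal{P}(I)$. The key translation is between phases and signs of $\stab$: writing $Z(C) = x + iy$ with $y = \dim(C) > 0$, one has $\phi(C) \leq 1/2$ iff $\mathrm{Re}\, Z(C) = -\stab(C) \geq 0$ iff $\stab(C) \leq 0$, and dually $\phi(C) > 1/2$ iff $\stab(C) > 0$.

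The first task is to reconcile the two descriptions of $\mathcal{F}(\stab)$ (and symmetrically $\mathcal{T}(\stab)$). If $C \in \mathcal{P}(0,1/2]$ and $F \subset C$, intersecting $F$ with the HN filtration of $C$ produces a filtration of $F$ whose successive quotients embed into $Z$-semistable HN factors of $C$ of phase $\leq 1/2$; since a nonzero subobject of a semistable object has phase bounded above by the ambient phase, each HN factor of $F$ has phase $\leq 1/2$, which gives $\mathrm{Re}\, Z(F) \geq 0$ and hence $\stab(F) \leq 0$. Conversely, if every subobject of $C$ satisfies $\stab \leq 0$, then the first HN subobject $C_1 \subset C$ is $Z$-semistable of the largest phase $\phi_1$, and the hypothesis $\stab(C_1) \leq 0$ forces $\phi_1 \leq 1/2$; the strict decrease of HN phases propagates this bound to every factor, placing $C \in \mathcal{P}(0,1/2]$. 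The analogous argument, using quotient objects in place of subobjects, handles the second characterization of $\mathcal{T}(\stab)$.

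With these descriptions in hand, the torsion pair axioms fall out. For $\Hom$-vanishing, any nonzero $f \colon T \to F$ with $T \in \mathcal{T}$ and $F \in \mathcal{F}$ has image $\im(f)$ that is simultaneously a quotient of $T$, forcing $\stab(\im(f)) > 0$, and a subobject of $F$, forcing $\stab(\im(f)) \leq 0$, a contradiction. For the existence of the short exact sequence, given $C$ with HN filtration $0 = C_0 \subset \cdots \subset C_k = C$ and phases $\phi_1 > \cdots > \phi_k$, let $j$ be the largest index with $\phi_j > 1/2$ (and $j = 0$ if no such index exists). Then $C_j$ has all HN factors of phase in $(1/2,1)$, so $C_j \in \mathcal{T}$, while $C/C_j$ inherits the HN filtration with phases in $(0,1/2]$, so $C/C_j \in \mathcal{F}$, and the sequence $0 \to C_j \to C \to C/C_j \to 0$ is the desired torsion decomposition.

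The main obstacle is really bookkeeping around sign conventions when translating $\stab$ into the complex stability picture, together with careful use of the inequality that the phase of a nonzero object is bounded above by the maximum phase and below by the minimum phase of its HN factors. This last point follows from the additivity $Z(C) = \sum Z(G_i)$ and the convex geometry of the upper half-plane, but it needs to be invoked at the right moment to see that both the subobject/quotient characterizations and the HN-phase characterizations of $\mathcal{T}(\stab)$ and $\mathcal{F}(\stab)$ agree.
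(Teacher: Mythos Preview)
Your argument is correct and is the standard one. The paper itself does not supply a proof of this lemma; it simply quotes it as \cite[Lemma 6.6]{Bridge} and moves on, so there is nothing to compare against beyond noting that your HN-filtration approach is exactly the expected route (and is indeed how Bridgeland proves it).

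One minor wording quibble: in the first direction for $\mathcal{F}(\stab)$, when you intersect $F$ with the HN filtration of $C$, the resulting filtration of $F$ need not be the HN filtration of $F$, so saying ``each HN factor of $F$ has phase $\leq 1/2$'' is not quite what you have shown. What you have shown is that $F$ admits \emph{some} filtration whose successive quotients embed into semistables of phase $\leq 1/2$, hence each quotient has $\mathrm{Re}\,Z \geq 0$; additivity of $Z$ then gives $\mathrm{Re}\,Z(F) \geq 0$ directly, which is all you need. This is a phrasing issue, not a mathematical gap.
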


We will denote $1_{\calT} (\stab) := 1_{(1/2, 1)} (\stab)$, $1_{\calF} (\stab) := 1_{(0,1/2]} (\stab) \in \hat{G}_{Hall}$.
As $H(Q)$ is $N^{\oplus}$-graded, we can obtain an associated $N^{\oplus}$-graded algebra $H(Q) \otimes_{\C} \C[M]$ with relations 
\begin{equation} \label{eqn:comm}
z^{(\textbf{m}, \textbf{n})} \cdot H = q^{-\textbf{m} \cdot \textbf{d}} H \cdot z^{(\textbf{m}, \textbf{n})},
\end{equation}
 where $\textbf{m} \in M$, $H \in H(Q)_\textbf{d}$. The exponent of $q$ is $-\textbf{m} \cdot \textbf{d}$ since we are actually having 
 $-(\textbf{m}, \textbf{n}) \cdot (\textbf{d}, \textbf{0})$.
 By the same limiting process in Section \ref{sec:Hallalg}, we obtain the completion $\widehat{H(Q) \otimes_{\C} \C[M]}$. 

Next, we want to generalize the notions of broken line $\gamma$ to the stability scattering diagram. 
We consider the projection $\chi: H(Q)  \to \C$, 
\begin{equation} \label{eqn:euler_hall}
\chi ([X]) = \Upsilon ([X]) |_{q^{1/2}=-1} = \sum (-1)^i \dim _{\C} H^i (X_{an}, \C),
\end{equation}
where $X_{an}$ denotes $X$ as a smooth complex analytic variety. 

To each linear piece of $\gamma$, the attaching 
monomials attaching are replaced by $[X \to \calM] \otimes z^{(\textbf{m}, \textbf{n})}$.
Behavior at a bend at a wall $\frakd$ is given by conjugation by $\Phi_{\frakD}(\frakd)$. 
We can retain the ordinary attaching monomials by applying 
\[\chi :H(Q) \otimes_{\C} \C[\wM] \to \C[\wM], \]
\begin{equation} \label{eqn:chi1}
\chi([X \to \calM]) = \chi(X) z^{ (p^*(\dim(X)), \dim(X)) } .
\end{equation}

Noted above that on crossing a generic point of a wall $\frakd \in \frakD$ in the positive direction, the wall crossing automorphisms are given by 
\[ z^{(\textbf{m}, \textbf{n})} \mapsto \Phi_{\frakD} (\frakd) (z^{(\textbf{m}, \textbf{n})} ).\]
This is the conjugation of $\hat{G}_{reg}$ acting on $\widehat{H(Q) \otimes_{\C} \C[M]}$.
After taking $\chi$ to $\Phi_{\frakD} (\frakd) (z^{(\textbf{m}, \textbf{n})})$, we will obtain the path-ordered products as in \eqref{eqn:path_ordered}.

In particular, for an acyclic quiver, Bridgeland \cite[Lemma 11.4 and 11.5]{Bridge} has shown that the the stability scattering diagram is equivalent to the cluster scattering diagram after applying the Euler characteristic map.

Then for each $(\textbf{m}, \textbf{n})$ lies in $\calC$, where $\calC \in \Delta$ (cluster complex), we can define a \emph{Hall algebra theta function} as 
\begin{equation} \label{eqn:thetafun}
\vartheta_{(\textbf{m}, \textbf{n})} (\stab) = 1_{\calF} ((\textbf{m}, \textbf{n}))^{-1} z^{(\textbf{m}, \textbf{n})} 1_{\calF} ((\textbf{m}, \textbf{n}))
	\in \widehat{H(Q) \otimes_{\C} \C [M]},
\end{equation}
At the first sight, this definition seems to differ from the one in Section \ref{sec:broken}. 
At the same time, by Proposition \ref{rmk:thetaiscluster}, we can calculate theta functions considering paths to the positive chamber. Thus, the two definition coincide after applying Euler characteristic map to the Hall algebra theta function.
Note that here our direction goes from the negative chamber to the positive chamber which is the reverse of the set up in \cite{Bridge}.
After applying $\chi$ to this Hall algebra theta function, we will get back the theta function in usual sense. We will give more interpretation of theta function in Section \ref{sec:halltheta}. 

\subsection{Stability broken lines} \label{sec:def_hallline}
In this section, we will repeat similar ideas as in Section \ref{sec:broken} to define stability broken lines.
Consider $\frakD$ a stability scattering diagram. 
\begin{definition} \label{hall_brokendef}
Let $\frakD$ be a stability scattering diagram, $\textbf{m} \in \wM \setminus \{0\}$ and $\enpt \in \wM_{\R} \setminus \text{Supp}(\frakD)$. 
(For simplicity, we write $\textbf{m} \in \wM \setminus \{0\}$ instead of $(\textbf{m}, \textbf{n}))$ in this definition.)

A \emph{stability broken line} for $m$ with endpoint $\enpt$ is a piecewise linear continuous proper path $\gamma : ( - \infty , 0 ] \rightarrow M_{\mathbb{R}} \setminus Sing (\frakD)$ with a finite number of domains of linearity.
An element $[\calN \raw \calM] z^m \in H(Q) \otimes_{\C} \C[M]$,
 where $\calN \raw \calM$ factors through $\calN \raw \calM_d \raw \calM$, with $d$ the dimension vector for the representation,
 is attached to each domain of linearity $L \subseteq ( - \infty, 0)$ of $\gamma$.
 The path $\gamma$ and the $[\calN \raw \calM] z^{\textbf{m}}$ need to satisfy the following conditions:
\begin{itemize}
    \item $\gamma(0) = \enpt$.
    \item If $L$ is the first (i.e., unbounded) domain of linearity of $\gamma$, then $[\calN \raw \calM] z^\textbf{m} = z^{\textbf{m}}$. This is corresponding to the zero representation $[\bullet \mapsto 0]$.
    \item In each domain of linearity $L \subset (-\infty, 0]$, the attached Hall algebra monomial is $[\calN \subset \calM] z^m$. After applying integration map, by \eqref{eqn:chi1}, we have
    \[ \chi ([\calN \subset \calM])z^\textbf{m} = \chi ([\calN]) z^{\wpp^*(\dim \calN)+\textbf{m}}.
    \]
    Then for $t \in L$, $\gamma'(t) = -(p^*(\dim \calN)+m)$.
    \item $\gamma$ bends only when it crosses a wall.
     If $\gamma$ bends from the domain of linearity $L$ to $ L'$ when crossing $\frakd$ defined in Theorem \ref{thm:Hallscattering}, then $[\calN \subset \calM] z^m$ is a term in $\Phi_{\frakD} (\frakd) \cdot ([\calN \subset \calM] z^\textbf{m})$.
\end{itemize}
\end{definition}

\subsection{Further properties in the stability scattering diagrams} \label{sec:prop_hall}
In later sections, we will compute the Hall algebra wall crossing explicitly. Let us formulate some equations in the Hall algebra in this section.

In Section \ref{sec:stability}, for $\stab \in M_{\R}$, we define 
\[
	1_{ss} (\stab) = [\calM_{1/2} (\stab) \subseteq \calM ],
\]

First note that if $\stab$ is a general point on some wall $\frakd$ in the cluster complex, there is a representation $D$ with $\dim D$ primitive which is a $\stab$-semistable object. 
 We are going to show $D$ is indecomposable. 
 If not, $D= D_1 \oplus D_2$ for some $D_1$, $D_2$ non zero. 
 Then for all $\stab \in \frakd_i$, $\stab(D_1)  + \stab(D_2)=\stab (D) =0$.
 As $D_1$ and $D_2$ are subojects of $D$, by definition of semistability,
 $\stab(D_1), \stab(D_2) \leq 0$.
 Therefore, $\stab(D_1)= \stab(D_2) = 0$.
 However, $\frakd$ is of co-dimension $1$ in $N$.
 Hence the normal space to $\frakd$ is of dimension $1$ only.
 This implies $\dim D_1$ and $\dim D_2$ are proportional to $\dim D$, contradicting to the condition that $\dim D$ is primitive.
 Thus $D$ is indecomposable. 
 
 However, note that $D$ may not be the unique indecomposable representation which is $\stab$ semistable.
 Consider a regular representation $D$. For representation $K$ with dimension vector $k \dim D $, where $k $ is a positive integer, the representation $K$ may still be indecomposable. 
 For this case, we would still have the primitive normal of the wall is an indecomposable representation. It is only that scalar multiple of the primitive normal may correspond to indecomposable representation as well. 
 
If $D$ is preprojective or preinjective, then representation with dimension vector a multiple of $\dim D$ would not be indecomposable. 
Hence for $\stab$ such that it corresponds to a preprojective or preinjective representation, $1_{ss} (\stab)$ is a formal sum 
 \begin{equation} \label{eqn:1ss}
 1_{ss} (\stab) = 1 +  \sum_{k \geq 1} [BGL_k (D) \raw \calM],
 \end{equation}
 where $BGL_k(D)$ are classifying space for $GL_k$.
 More precisely, $[BGL_k(D) \raw \calM] $ means that a point is mapped to $k$ copies of D, i.e. $ \bullet \mapsto D^{\oplus k}$.

Theorem \ref{thm:Hallscattering} tells us that the wall crossing automorphism at $\stab$ is conjugation by $1_{ss} (\stab)$.
 Therefore, we wish to understand $1_{ss}(\stab)^{-1}$ which is
 \begin{equation} \label{eqn:inverse}
 1_{ss}(\stab)^{-1}=1 + \sum_k (-1)^k \prod_{l=1}^k [BGL_{r_l}(D) \raw \calM],
 \end{equation}
 where the product means multiplying $k$ many $[BGL_{r_l} \raw \calM ]$, for some $r_l \in N$, together. 
 For example, up to degree 2,
we have
 \begin{align*} 
  1_{ss}(\stab)^{-1} & \sim 1 - [BGL_1 (D)\raw \calM] - [BGL_2 (D)\raw \calM] \\
 +& [BGL_1(D) \raw \calM]^2 + [BGL_1(D) \raw \calM] \star [BGL_2 (D)\raw \calM] \\
  + &[BGL_2 (D)\raw \calM] \star [BGL_1(D) \raw \calM].
 \end{align*}

Let us also recall how to express $\GL_d$ as an element in $K(\Var / \C)$:

\begin{lemma} \cite[Lemma 2.6]{bridge_hall}
\begin{align*}
[\GL_d] & = [\Aff^{ d(d-1)/2}] \prod_{k=1}^d (\Aff^k -1).
\end{align*}
\end{lemma}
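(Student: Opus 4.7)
The plan is to realize $\GL_d$ as the variety of ordered bases of $\C^d$ and build it up one column at a time, applying the scissor relations in $K(\Var/\C)$. Concretely, for $0 \leq k \leq d$, let $F_k$ denote the variety of ordered $k$-tuples $(v_1, \dots, v_k)$ of linearly independent vectors in $\C^d$, so that $F_0 = \Spec \C$ and $F_d \cong \GL_d$ under the identification of a matrix with its sequence of columns.

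First, I would analyze the forgetful morphism $\pi_k \colon F_k \to F_{k-1}$ that drops the last vector. Its fiber over a point $(v_1, \dots, v_{k-1})$ is $\C^d$ with the $(k-1)$-dimensional subspace $\operatorname{span}(v_1, \dots, v_{k-1})$ removed, which as a variety is isomorphic to $\Aff^d \setminus \Aff^{k-1}$ and therefore has class $[\Aff^d] - [\Aff^{k-1}]$. The key step, and the main technical obstacle, is to verify that $\pi_k$ is a Zariski-locally trivial fibration so that the class of the total space factors as the product of the class of the base and the class of the fiber. This can be done by covering $F_{k-1}$ by open sets on which a complementary direct summand to $\operatorname{span}(v_1, \dots, v_{k-1})$ can be chosen algebraically, for instance by fixing, on each open set, a collection of $d-k+1$ standard coordinate directions whose span provides a complement there; a nonvanishing minor condition cuts out such an open set, and these cover all of $F_{k-1}$.

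Once the fibration property is established, an immediate induction on $k$ gives
\[
[\GL_d] \;=\; [F_d] \;=\; \prod_{k=1}^d \bigl([\Aff^d] - [\Aff^{k-1}]\bigr).
\]
To match the stated form, I would factor $[\Aff^d] - [\Aff^{k-1}] = [\Aff^{k-1}]\bigl([\Aff^{d-k+1}] - 1\bigr)$ and re-index the resulting product: the powers of $\Aff$ aggregate to $[\Aff^{\sum_{k=1}^d (k-1)}] = [\Aff^{d(d-1)/2}]$, while the substitution $j = d-k+1$ in the remaining factors rewrites them as $\prod_{j=1}^d ([\Aff^j] - 1)$, yielding precisely the claimed expression.
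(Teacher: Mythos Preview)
Your argument is correct and is precisely the standard proof of this identity. Note, however, that the paper does not actually supply its own proof of this lemma: it is quoted directly from \cite[Lemma 2.6]{bridge_hall} and used without further justification. The argument you give---building $\GL_d$ as the space of ordered bases via the tower of forgetful maps $F_k \to F_{k-1}$, checking Zariski-local triviality by trivializing over the loci where a fixed set of coordinate directions complements the span, and then factoring $[\Aff^d]-[\Aff^{k-1}]=[\Aff^{k-1}]([\Aff^{d-k+1}]-1)$---is exactly the proof one finds in the cited reference, so there is nothing to contrast.
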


By similar techniques, we can also calculate the product $\prod_{l=1}^k [BGL_{r_l}(D) \raw \calM]$.
 As the product is associative, we can first work on $[BGL_{r_1}(D) \raw \calM] * [BGL_{r_2}(D) \raw \calM]$.
 This consists of exact sequences
 \[
0 \longrightarrow D^{\oplus r_1} \longrightarrow Y \longrightarrow D^{\oplus r_2} \longrightarrow 0,
 \]
 where the automorphisms of the exact sequence can be viewed as `upper triangular' matrices of the form
 \[\left( 
 \begin{array}{c c}
 \GL_{l_1} & * \\
 0 & \GL_{l_2} 
 \end{array}
 \right),\]
 where $*$ denotes entries with arbitrary element in $\C$.
 Thus in general, the underlying stack of \[\prod_{l=1}^k [BGL_{r_l}(D) \raw \calM]\]
 is of the form
$BG$ where $G$ is the group of $(r_1 + \cdots + r_k)$ block-upper triangular matrices:
 \[\left(
 \begin{array}{c c c c}
 \GL_{l_1} & * & * & * \\
 0 & \GL_{l_2} & * & * \\
 0 & 0 & \ddots & * \\
 0 & 0 & 0 & \GL_{l_k}
 \end{array}
 \right).\]
 Hence after applying the integration map to $\prod_{l=1}^k [BGL_{r_l}(D) \raw \calM]$, we have
 \begin{align*}
  & \chi(\prod_{l=1}^k [BGL_{r_l}(D) \raw \calM]) \\
  =& \frac{1}{ \left( \prod_{l=1}^k q^{r_l(r_l-1)/2} \prod_{s=1}^{r_l} (q^s-1) \right)  \prod_{u<v} q^{r_u r_v} }.
 \end{align*}

\section{Theta functions and the Caldero-Chapoton formula}

In the finite classification of cluster algebras, Fomin and Zelevinsky have shown \cite{cluster2} the cluster algebras of finite type can be associated to a Dynkin quiver.
At the same time, the set of indecomposable representations of a
quiver of Dynkin type is in bijection with the set of positive roots by Gabriel's theorem.
Using these correspondence, 
Caldero and Chapoton \cite{ccformula} then related cluster variables with indecomposable quiver representations for ADE type cluster algebras. 
They found that the coefficients of the cluster variables are the Euler characteristics of the quiver Grassmannians. This proves that the coefficients are non-negative.
Later Caldero-Keller \cite{caldero2008triangulated} generalized the result to acyclic quivers.

Let $Q$ be an acyclic finite quiver with vertices $1, \dots, n$. 
Let $D$ be a finite-dimensional representation of $Q$ with dimension vector $\textbf{d}$.
Denote $Gr_\textbf{c}(D) :=\{ C \in \bmod (Q) | C \subseteq D, \dim (C) =\textbf{c} \}$ for $\textbf{c} \in \N^n$.
Define the Caldero-Chapoton (or the cluster character) formula as
\[CC(D) = \frac{1}{A_1^{d_1} \cdots A_n^{d_n}} \sum_{0 \leq \textbf{c} \leq \textbf{d}} \chi (Gr_\textbf{c}(D)) \prod_{i=1}^n A_i^{\sum_{j \rightarrow i} c_j +\sum_{i \rightarrow j} (d_j-c_j)}, \]
where the sum is taken over all vectors $\textbf{c} \in \N^n$ such that $0 \leq c_i\leq d_i$ for all $i$.
Then if the quiver $Q$ is of type ADE, Caldero-Chapoton showed that
\begin{theorem} \label{thm:cc} \cite{ccformula}
 \[CC(D ) = X_D,\]
 where $X_D$ is the cluster variable obtained from $D$ by composing Fomin-Zelevinsky's bijection with Gabriel's.
\end{theorem}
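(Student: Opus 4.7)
The plan is to deduce this result from the theta-function machinery developed earlier in the paper, specifically from Theorem \ref{thm:theta}. The strategy has three steps: first, identify the cluster variable $X_D$ with a theta function via the $g$-vector; second, invoke the Hall algebra theta function formula to expand it; and third, project from $\cAp$ back to $\calA$ by specializing the principal-coefficient variables.

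First, by Gabriel's theorem, for $Q$ of Dynkin type, the indecomposable representations $D$ of $Q$ are in bijection with the positive roots, and via Fomin--Zelevinsky's bijection each such $D$ corresponds to a cluster variable $X_D$. By Theorem \ref{thm:finite}, combined with Remark \ref{rk:cg} (which identifies $g$-vectors with integer points of chambers in the cluster complex), the $g$-vector of $X_D$ is precisely $-\calE(\textbf{d})$, where $\textbf{d} = \dim D$. In particular, $-\calE(\textbf{d})$ lies in the interior of a chamber of the cluster complex $\Delta_{\textbf{s}}$. For Dynkin type the cluster complex is finite and covers $M_\R$, so this chamber is reachable from the positive chamber $\calC_{\textbf{s}}^+$. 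Choosing a generic endpoint $\enpt$ in $\calC_{\textbf{s}}^+$ and applying Proposition \ref{rmk:thetaiscluster}, we obtain
\[ X_D = \vartheta_{\enpt,\,(-\calE(\textbf{d}),\,0)}. \]

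Second, apply Theorem \ref{thm:theta} to this theta function, with $\textbf{m}_0 = -\calE(\textbf{d})$. This gives
\[ \vartheta_{\enpt,\,(-\calE(\textbf{d}),\,0)} \;=\; z^{(-\calE(\textbf{d}),\,0)} \sum_{0 \leq \bc \leq \textbf{d}} \chi\bigl(\Gr(\bc, D)\bigr) \, z^{(p^*(\bc),\,\bc)}. \]
To recover the $CC$ formula, project to the $\calA$-theta function by specializing the frozen variables $X_i = 1$, as discussed in Remark \ref{rk:cg}. Then $z^{(-\calE(\textbf{d}),0)}$ becomes $\prod_i A_i^{(-\calE(\textbf{d}))_i}$ and $z^{(p^*(\bc),\bc)}$ becomes $\prod_i A_i^{p^*(\bc)_i}$. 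Using \eqref{eq:gg} we have $(-\calE(\textbf{d}))_i = -d_i + \sum_{\alpha : i \to j} d_j$, and unpacking $p^*$ through \eqref{eq:quivermat} gives $p^*(\bc)_i = \sum_{j \to i} c_j - \sum_{i \to j} c_j$. Adding these exponents yields $-d_i + \sum_{i \to j} d_j + \sum_{j \to i} c_j - \sum_{i \to j} c_j$, which after pulling out the common factor $A_1^{-d_1} \cdots A_n^{-d_n}$ matches precisely the exponent $\sum_{j \to i} c_j + \sum_{i \to j} (d_j - c_j)$ in the definition of $CC(D)$. This completes the identification $CC(D) = X_D$.

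The main obstacle is the careful bookkeeping in the exponent comparison: the exchange matrix $\epsilon$ used here differs from the cluster-community convention by a transpose (as noted in Section \ref{sec:cluster}), and the sign conventions for $\calE$ and $p^*$ must be reconciled with the arrow directions appearing in the $CC$ formula (recall also that the quiver whose representations we study is the opposite of the one associated to $\epsilon$). A secondary subtlety is verifying that for Dynkin type the point $-\calE(\textbf{d})$ genuinely lands in the interior of a full-dimensional chamber reachable from $\calC_{\textbf{s}}^+$, rather than on a lower-dimensional wall; this follows from the sign-coherence of $c$-vectors together with Theorem \ref{thm:tropduality}, but it should be invoked explicitly. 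Everything else --- including the reduction of the sum to a finite sum (which is automatic in Dynkin type since finitely many broken lines contribute) --- follows formally from the results already established in the excerpt.
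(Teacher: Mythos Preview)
The paper does not prove Theorem \ref{thm:cc}; it is cited from \cite{ccformula} as a result of Caldero--Chapoton, and the paper's own contribution (Theorem \ref{thm:sectheta}) is a Hall-algebra re-derivation of the \emph{expression} $CC(D)$ as a theta function, not of the identity $CC(D)=X_D$ itself.

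Your proposal attempts to run the logic in reverse and extract a proof of $CC(D)=X_D$ from Theorem \ref{thm:sectheta}. Steps 2 and 3 are fine: Theorem \ref{thm:sectheta} indeed gives $\vartheta_{\enpt,(-\calE(\textbf{d}),0)}=CC(D)$ after specialization, and the exponent bookkeeping you describe is exactly equation \eqref{eqn:newcc}. The problem is Step 1. You assert that the $g$-vector of the specific cluster variable $X_D$ --- the one attached to $D$ by composing Gabriel's bijection with Fomin--Zelevinsky's --- is $-\calE(\textbf{d})$, citing Remark \ref{rk:cg}. But Remark \ref{rk:cg} only shows that $\textbf{m}_0$ is the $g$-vector of whatever cluster monomial $\vartheta_{(\textbf{m}_0,0)}$ happens to be; it does not identify that monomial with $X_D$ in the FZ/Gabriel labeling. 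The place in the paper where $-\calE(\textbf{d})$ is matched to the $g$-vector of the cluster variable indexed by $D$ is Remark \ref{sec:scatt_camber}, and that remark explicitly deduces the match \emph{from} the Caldero--Chapoton formula. So within the paper your argument is circular.

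To make your approach non-circular you would need an independent proof that the cluster variable $X_D$ has $g$-vector $-\calE(\dim D)$, e.g.\ via the index in the cluster category or tilting theory. That fact is true and available in the literature, but it is external to this paper; once it is granted, your derivation does give an alternative route to Theorem \ref{thm:cc}, different in spirit from Caldero--Chapoton's original combinatorial proof.
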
 

We have
\begin{align*}
CC(D)
	& = \frac{\prod A_i^{\sum_{i \rightarrow j} d_j}}{A_1^{d_1} \cdots A_n^{d_n}} \sum_{0 \leq \textbf{c} \leq \textbf{d}} \chi (Gr_\textbf{c}(D)) \prod_{i=1}^n A_i^{\sum_{j \rightarrow i} c_j +\sum_{i \rightarrow j} -c_j} 
\numberthis \label{eqn:newcc}\\
	& = z^{-\calE(\textbf{d})} \sum_{0 \leq \textbf{c} \leq \textbf{d}} \chi (Gr_\textbf{c}(D)) z^{p^*(\textbf{c})},
\end{align*}
where $\calE(d) $ is defined as in \eqref{eqn:calE}.

\begin{remark} \label{sec:scatt_camber}
In this remark, we will combine the discussion in remark \ref{rk:cg} and Section \ref{sec:prop_hall} to talk about the roles of the $c$ and $g$ vectors in terms of the chambers of scattering digrams and quiver representations.

First, let us re-write the cluster character map in terms of principal coefficients. 
\[
CC(D) = z^{-(\calE(\textbf{d}),0)} \sum_{0 \leq \textbf{c} \leq \textbf{d}} \chi (Gr(\textbf{c},D)) z^{\wpp^*(\textbf{c},0)}. 
\]

Combining with \eqref{eq:gvector}, we learnt that $-\calE(\textbf{d})$ are $g$ vectors of the corresponding cluster monomials. 

Let $\calC_\prin$ denote a chamber in the cluster complex of the scattering diagram $\frakD^{\cAp}$.
Consider the projection map $M_\R \oplus N_\R \rightarrow M_\R$. 
The image of $\frakD^{\cAp}$ would be a scattering diagram $\frakD$ in $M_R$.
We will denote the image of $\calC_\prin$ as $\calC$. Note that $\calC$ is a chamber in the cluster complex of $\frakD$ and it is a maximal cone of a simplicial fan by \cite[Theorem 2.13]{ghkk}.
Thus we can consider $\g_1, \dots, \g_n$ the generators of $\calC$, and $\bc_1, \dots,  \bc_\n$ the normal vectors of the walls bounding the chamber $\calC$. 

In remark \ref{rk:cg}, we learnt that the normal vectors are the $c$ vectors of the seed \seed ~corresponding to the chamber. 
Next we comibine the idea in the beginning of Section \ref{sec:prop_hall}.
There we associate the primitive normal vector of the walls to the indecomposable representation of Q. 
In particular, for $\calC$ to be in the cluster complex, we can deduce the $c$ vectors are actually the dimension vectors of the indecomposable pre-projective or pre-injective representations of $Q$.

Next we turn our attention to the $g$-vectors.
In remark \ref{rk:inj}, the $g$ vectors actually represent the injective resolution of the quiver representation $D$. 
Hence if $g_k$ is not the standard basis vectors, 
then there is an indecomposable representation $D_k$ of $Q$ such that $-\calE(D_k) = \textbf{g}_k$.
If $g_k$ is one of the standard basis vectors $e_i$, we can associate $g_k$ with the shift of injective $I(i)[-1]$ from the cluster category theory in \cite{BMRRT}, \cite{keller_cat}.  
The above discussion is known to representation theorists by tilting theory, 
here we just want to simply draw this association by looking the dimension vectors. 
\end{remark} 

Let us consider some examples:
\begin{example}
Going back to Example \ref{ex:11case}, we have
  \[ 
    \vartheta_{\enpt, (1,-1)} =  
    A_1 A_2^{-1} +A_1^{-1} A_2^{-1}X_2  +A_1^{-1} A_2X_1X_2 .  
  \]
We can write it as
  \[ 
    \vartheta_{\enpt, (1,-1)} =  
    \frac{A_1^2}{A_1^{-1}A_2^{-1}}(1+ A_1^2 X_2+A_1^2 A_2^2X_1X_2) = CC(\C \rraws \C).
  \]
The three terms corresponds to the three representations with dimension vectors $(0,0)$, $(0,1)$, $(1,1)$.
\end{example}

For example, consider Figure \ref{fig:22case}, the scattering diagram associated to the Kronecker 2-quiver. Let us consider the chamber $\calC$ spanned by $(2,-1)$ and $(3, -2)$. Then the corresponding cluster is $\vartheta_{(2,-1)}, \vartheta_{(3,-2)}$ which is associated to the indecomposable projectives $0 \rraws \C$, $\C \rraws \C^2$.
 For any point $r$ in the chamber $\calC$, we can write $r = \lambda_1 (2,-1) + \lambda_2 (3, -2)$. 
The corresponding quiver representation of $r$ would be $ ( 0 \rraws \C)^{\lambda_1} + (\C \rraws \C^2)^{\lambda_2 }$.

\section{Positive Crossing and the AR quiver} \label{sec:AR}
Consider a path $\gamma$ in $\frakD$.
 Assume $\gamma$ crosses a wall $\frakd$.
 Let $\bc \in N^+$ be the normal of $\frakd$, i.e. $\frakd \subset \bc^{\perp}$. 
Then the crossing is a \emph{positive crossing} if the path passes from the region where $n$ is negative to the region it is positive. Otherwise, it is called a \emph{negative crossing}.

Note that in the two-dimensional scattering diagram, if the path travels around the origin anti-clockwise from the lower half plane towards the positive chamber, the crossing is always positive.

Now consider two walls $\frakd_1 $ and $\frakd_2$ in the cluster complex.
 Let $\textbf{c}_i \in N^+$ be the normal of $\frakd_i$ respectively.
 Then $\frakd_1$ and $\frakd_2$ may or may not intersect along a co-dimensional 2 polyhedral set which we call a \textbf{joint}.
 If $\frakd_1$ intersects $\frakd_2$ along a joint $\frakj$,
 we decompose $\frakd_1$ into three disjoint components $\frakd_1^+$, $\frakj$, $\frakd_1^-$ where each of them are connected.
 Let us assume $-p^*(\textbf{c}_1)$ does not lie on the joint $\frakj$.
 Then $\frakd_1^+$ is defined to be the connected piece which contains the point $-p^*(\textbf{c}_1)$ which we will name the outgoing piece of $\frakd_1$.
 Figure \ref{fig:gamma} illustrates the decomposition.
 If $\frakd_1$, $\frakd_2$ do not intersect along a joint,
 then we take $\frakd_1^+ = \frakd_1$.
 Note that $\frakd_1^+$ still contains the point $-p^*(\textbf{c}_1)$ by construction.
  
Next consider a path $\gamma$ has positive crossing from the outgoing piece of $\frakd_1$ to the wall $\frakd_2$. Please refer to Figure \ref{fig:gamma} for illustration.
 
\begin{figure}
\centering
\begin{tikzpicture}
\draw
(1,1) -- (-1,-1)
(-4,-1) -- (-2.5, 0.5) 
(-4,-1) -- (2,-1)
(-1,3) -- (1,1)
(-3,1)-- (-1,3)
(-3,1) -- (1,-3)
(1,1) -- (4,1)
(4,1) -- (2,-1)
(1,-3) -- (3,-1)
(2.5, -0.5) -- (3,-1)
(-1,2) node[right] {$\frakd_1^-$}
(-2.5, -0.5) node[right] {$\frakd_2$}
(1.5,-1.5) node[right] {$\frakd_1^+$}
(0,1) node[right] {\huge $ \gamma$   }
(1,-2) node[right] {$-p^* (\textbf{c}_1)$}
;

\filldraw (1,-2) circle (1pt);

\draw [dashed]
(-2,1) -- (1,1)
(-2.5, 0.5) -- (-2,1)
(1,1) -- (2.5, -0.5)
;

\draw[ultra thick, ->] (-1,-2) arc (-90:60:1.5) 
;
\end{tikzpicture}
\caption{$\gamma$ goes from $\frakd_1^+$ to $\frakd_2$} \label{fig:gamma}
\end{figure} 
 
From last section, for $\textbf{c}_i \in N^+$, $i=1,2$, we can consider $\textbf{c}_i$ as a dimension vector for some indecomposable semistable representation $C_i$.
 Then since $\gamma$ having positive crossing from outgoing piece of $\frakd_1$ to $\frakd_2$, 
 $-p^*(\textbf{c}_1)$ points away from the positive direction of $\textbf{c}_2$, i.e.
\[
	\langle -p^*(\textbf{c}_1) , \textbf{c}_2 \rangle <0 .
\]
Here the inequality is strict as we have assume $-p^*(\textbf{c}_1)$ does not lie on $\frakd_2$.
We have
\begin{align*}
& \langle -p^*(\textbf{c}_1) , \textbf{c}_2 \rangle = - \{ \textbf{c}_1, \textbf{c}_2 \} \\
=& \chi(\textbf{c}_1, \textbf{c}_2) - \chi( \textbf{c}_2, \textbf{c}_1) \\
=& \dim(\Hom (C_1, C_2) ) - \dim \Ext^1 (C_1, C_2) \\
	& - \dim ( \Hom (C_2, C_1) ) + \dim (\Ext^1 (C_2, C_1)). \numberthis \label{eqn:ar}
\end{align*}

We will now apply the lemmas in Section \ref{rmk:ext_zero}.
 
First assume $Q$ is a connected acyclic quiver of finite type.
 Then from Theorem \ref{thm:AR_component}, we have the AR quiver is connected and it equals both the pre-projective component and the pre-injective component.
 At the same time the regular component is empty.
 Note that if $\dim(\Hom (C_1, C_2) ) \neq 0$, then $C_1$ is a predecessor of $C_2$ which implies $\dim ( \Hom (C_2, C_1) ) = 0$.
 From Section \ref{rmk:ext_zero}, we know that only one of the terms $\dim(\Hom (C_i, C_j) )$, $\dim \Ext^1 (C_i, C_j)$, $i \neq j$ is nonzero.
 In order to attain $\langle -p^*(\textbf{c}_1) , \textbf{c}_2 \rangle <0$, we will have
 \[ \langle -p^*(\textbf{c}_1) , \textbf{c}_2 \rangle =  - \dim \Ext^1 (C_1, C_2)- \dim ( \Hom (C_2, C_1) ).\]
 As the term above is negative, one of the $\dim \Ext^1 (C_1, C_2)$ and / or $\dim ( \Hom (C_2, C_1) )$ is non-zero which implies $C_2$ is a predecessor of $C_1$ in the AR quiver by Lemma \ref{thm:extnonzero} and Theorem \ref{thm:ARprop}. 

Now consider $Q$ is a connected acyclic quiver with infinitely many indecomposables. 
 Then the AR quiver consists of three components: pre-projective $\calP$, regular $\calR$ and pre-injective $\calI$.
 Assume $C_i$ lies in one of the $\calP, \calR, \calI$, for $i=1,2$ and $C_1 , C_2$ do not lie in the same component.
 We are going to see that $\gamma$ travels a direction from $\calI$ to $\calR$, to $\calP$.
 The reason is that from Theorem \ref{thm:PRI}, we know that
 \[ \Hom (\calR, \calP) = \Hom (\calI, \calP) = \Hom ( \calI , \calR) = 0.\]  
 And we also have 
 \[\Ext^1(\calP, \calR) = \Ext^1( \calP, \calI) = \Ext^1 (\calR, \calI) = 0\]
from Lemma \ref{thm:lemmaPRI}.
Therefore by using the same argument as above, we know that
 \[ \langle -p^*(\textbf{c}_1) , \textbf{c}_2 \rangle =  - \dim \Ext^1 (C_1, C_2)- \dim ( \Hom (C_2, C_1) ).\]
Hence we can only have three choices for $C_1$, $C_2$:
 $(1)$. $C_2 \in \calI$, $C_1 \in \calP$;  
 $(2)$. $C_2 \in \calI$, $C_1 \in \calR$;
 $(3)$. $C_2 \in \calR$, $C_1 \in \calP$.
 All three cases indicates that $C_2$ is the predecessor of $C_1$ in the general definition for predecessor. 
 This is saying that
 if $\gamma$ travels between walls corresponding to $\calP, \calR, \calI$,
 it would have crossed from the pre-injective to the regular then to the pre-projective if the crossings are positive.
 
We also have the cases $C_1, C_2 \in \calP $ or $C_1, C_2 \in \calI$.
 Then by Lemma \ref{thm:lemmaext}, Lemma \ref{thm:extnonzero} together with Theorem \ref{thm:ARprop},
 we have $C_2$ is a predecessor of $C_1$ from \eqref{eqn:ar}.
 Therefore, we can say if $\gamma$ is having a positive crossing in the scattering diagram, it is going in an opposite direction to the AR quiver. 
 Thus we have finished the proof of Theorem \ref{thm:ar} which is stated more precisely in the following proposition: 
 
\begin{theorem} \label{thm:arquiver}
Given two walls $\frakd_1$ and $\frakd_2$ in the cluster complex in $\frakD$.
 Let $\textbf{c}_i \in N^+$ be the normal of $\frakd_i$ for $i=1,2$.
 If the two walls intersect along a joint, a co-dimensional 2 polyhedral set,
 we consider the outgoing piece $\frakd_1^+$ of $\frakd_1$ which contains $-p^*(\textbf{c}_1)$ instead of the entire $\frakd_1$.

Let $\gamma$ be a path crossing the outgoing piece of $\frakd_1 \in \frakD$ and then $\frakd_2 \in \frakD$ with both crossings positive.
  Let $C_i$ be the corresponding indecomposable representation for $\frakd_i$, $i=1,2$.
  Then we have $C_2$ is a predecessor of $C_1$ in the Auslander-Reiten quiver of $Q$.
\end{theorem}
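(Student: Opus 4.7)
The plan is to convert the geometric positive-crossing hypothesis into a numerical inequality on Euler forms, and then read off the predecessor relation from the vanishing/nonvanishing pattern of $\Hom$ and $\Ext^1$ supplied by the lemmas of Section \ref{rmk:ext_zero}.

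First I would unpack the crossing condition. Because $\gamma$ enters the outgoing piece $\frakd_1^+$ (which by construction contains $-p^*(\textbf{c}_1)$) and then crosses $\frakd_2$ positively, together with the assumption that $-p^*(\textbf{c}_1)$ does not lie on $\frakd_2$, the strict inequality
\[
\langle -p^*(\textbf{c}_1), \textbf{c}_2 \rangle < 0
\]
must hold. Using $p^*(\textbf{n}) = \{\textbf{n}, \cdot\}$ and the elementary identity $-\{\textbf{c}_1,\textbf{c}_2\} = \chi(\textbf{c}_1,\textbf{c}_2) - \chi(\textbf{c}_2,\textbf{c}_1)$ coming directly from the definitions of $\chi$ and $\epsilon_{ij}$, followed by the hereditary formula $\chi(X,Y) = \dim\Hom(X,Y) - \dim\Ext^1(X,Y)$ applied to the indecomposables $C_1$ and $C_2$, this rewrites as equation \eqref{eqn:ar}:
\[
\dim\Hom(C_1,C_2) - \dim\Ext^1(C_1,C_2) - \dim\Hom(C_2,C_1) + \dim\Ext^1(C_2,C_1) < 0.
\]

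Next I would use the lemmas of Section \ref{rmk:ext_zero} to cancel the two terms of the wrong sign. Since $\frakd_1, \frakd_2$ lie in the cluster complex, Remark \ref{sec:scatt_camber} forces $C_1, C_2$ to be preprojective or preinjective. Lemmas \ref{thm:lemmaext} and \ref{thm:extnonzero} then handle the intra-component cases $C_1, C_2 \in \calP$ and $C_1, C_2 \in \calI$ (at most one of $\Hom$ and $\Ext^1$ is nonzero in each direction), while Proposition \ref{thm:PRI} and Lemma \ref{thm:lemmaPRI} give the cross-component vanishings $\Hom(\calR,\calP) = \Hom(\calI,\calP) = \Hom(\calI,\calR) = 0$ and $\Ext^1(\calP,\calR) = \Ext^1(\calP,\calI) = \Ext^1(\calR,\calI) = 0$. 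In every admissible configuration the contributions $+\dim\Hom(C_1,C_2)$ and $+\dim\Ext^1(C_2,C_1)$ vanish, so the displayed inequality reduces to
\[
\dim\Ext^1(C_1,C_2) + \dim\Hom(C_2,C_1) > 0.
\]

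Finally I would conclude. A nonzero $\Hom(C_2,C_1)$ places $C_2$ as a predecessor of $C_1$ directly from Proposition \ref{thm:ARprop}, using the extended convention $\calP \to \calR \to \calI$ to cover the cross-component case. A nonzero $\Ext^1(C_1,C_2)$ gives the same conclusion via Lemma \ref{thm:extnonzero} in the intra-component case and via Theorem \ref{thm:ARformula} combined with the inter-component vanishing of Lemma \ref{thm:lemmaPRI} when $C_1, C_2$ lie in different components. The main obstacle is purely bookkeeping: one must verify that the admissible configurations for the pair $(C_1, C_2)$ are exactly $(C_2\in\calI, C_1\in\calP)$, $(C_2\in\calI, C_1\in\calR)$, $(C_2\in\calR, C_1\in\calP)$, $C_1,C_2\in\calP$, and $C_1,C_2\in\calI$, and that in each of them the conclusion matches the extended predecessor relation on the AR quiver.
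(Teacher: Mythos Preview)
Your proposal is correct and follows essentially the same route as the paper: translate the positive-crossing hypothesis into the strict inequality $\langle -p^*(\textbf{c}_1),\textbf{c}_2\rangle<0$, rewrite it via the Euler form as \eqref{eqn:ar}, and then use the lemmas of Section~\ref{rmk:ext_zero} to force one of $\Hom(C_2,C_1)$ or $\Ext^1(C_1,C_2)$ to be nonzero, yielding the predecessor relation. One small inconsistency: you first invoke Remark~\ref{sec:scatt_camber} to say that walls in the cluster complex correspond to preprojective or preinjective indecomposables, but then list configurations with $C_i\in\calR$ among the ``admissible'' ones; under the theorem's hypothesis the regular cases do not arise, so you may drop them (the paper's discussion treats them only to indicate what happens more generally).
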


\section{Hall Algebra Theta function} \label{sec:halltheta}

In this section, we are going to give a generalization of the Caldero-Chapoton formula by the machinery set up in Chapter \ref{ch:chapter_motivic}. More precisely, we are going to show Theorem \ref{thm:theta} in the introduction which is giving an explicit description of the Hall algebra theta functions defined in \eqref{eqn:thetafun}.

Now let us recall Theorem \ref{thm:theta}:

\begin{theorem} \label{thm:sectheta}
Fix a stability scattering diagram $\frakD$.
Let $\enpt$ be a point in the positive chamber of $\frakD$.
Consider a point $\textbf{m}_0$ in the cluster complex.
We further assume that $\textbf{m}_0$ lies in the cluster chamber $\calC$ which corresponds to a seed not containing any initial cluster variable\footnote{The condition `the seed not containing any initial cluster variable' seems to be abrupt. This is only because the initial variables do not mutate or they correspond to the shift of injectives that the variables do not correspond to the indecomposable representations.}.
Then we can write $\textbf{m}_0 = -\calE (\textbf{d})$ for some $\textbf{d} \in (\Z_{\geq 0})^n$, where $\calE$ defined in \eqref{eq:gg} and we are going to see it is the $g$ vector. Then the Hall algebra theta function is
\[
\vartheta_{(\textbf{m}_0,0)} = \chi (\calG_{\calF (m_0)} (D))  z^{(\textbf{m}_0,0)},
\]
where objects in $\calG_{\calF (m_0)} (D)$ are representations $C$ in $\calF (m_0)$ equipped with an inclusion into $D$.
 By applying the Euler characteristic map $\chi$ (defined in \eqref{eqn:chi1}),
we get
\[
	\vartheta_{(\textbf{m}_0,0)} 
	 = z^{(- \calE (D),0)} \sum_{0 \leq \bc \leq \textbf{d}} \chi (\Gr(c,D)) z^{(p^*(\textbf{\bc}), \textbf{\bc})}.
\]
which is the cluster character formula.
 \end{theorem}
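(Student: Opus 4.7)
The plan is to compute the conjugation defining the Hall algebra theta function directly and then apply the Euler characteristic map \eqref{eqn:chi1} to recover the Caldero-Chapoton formula \eqref{eqn:newcc}. Starting from
\[
\vartheta_{(\textbf{m}_0,0)} = 1_{\calF}(\textbf{m}_0)^{-1}\, z^{(\textbf{m}_0,0)}\, 1_{\calF}(\textbf{m}_0)
\]
in the completed Hall algebra $\widehat{H(Q) \otimes_{\C} \C[\wM]}$, I would first use the commutation relation \eqref{eqn:comm} to push $z^{(\textbf{m}_0,0)}$ to the right past one copy of $1_{\calF}(\textbf{m}_0)$; on the dimension-$\bc$ graded piece this picks up a scalar factor $q^{-\textbf{m}_0 \cdot \bc}$. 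Expanding $1_{\calF}(\textbf{m}_0)^{-1}$ as a geometric series in $\hat G_{Hall}$ and combining it with the remaining $1_{\calF}(\textbf{m}_0)$ via the Hall product \eqref{eqn:hallmult}, the extension data organizes (via a telescoping cancellation of short exact sequences internal to $\calF(\textbf{m}_0)$) into a single stack parametrizing representations in $\calF(\textbf{m}_0)$ together with a chosen embedding into the ambient object.

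Second, I would identify that ambient object as the indecomposable $D$ with $\dim D = \textbf{d}$ and $-\calE(\textbf{d}) = \textbf{m}_0$, using the interpretation of $-\calE$ as $g$-vector from Remark \ref{rk:inj} and Remark \ref{sec:scatt_camber}. The hypothesis that the chamber $\calC$ contains no initial cluster variable ensures that $\textbf{m}_0$ genuinely corresponds to such an indecomposable, rather than a shift of injective $I(i)[-1]$. With this identification, the remaining stack is exactly $\calG_{\calF(\textbf{m}_0)}(D)$, yielding the first displayed equality of the theorem.

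Third, I would apply the Euler characteristic map $\chi$. Decompose
\[
\calG_{\calF(\textbf{m}_0)}(D) = \coprod_{0 \leq \bc \leq \textbf{d}} \calG_{\calF(\textbf{m}_0), \bc}(D),
\]
where each piece maps to $\calM_\bc$. Under \eqref{eqn:chi1} the dimension-$\bc$ piece contributes $\chi(\calG_{\calF(\textbf{m}_0), \bc}(D))\, z^{(p^*(\bc),\bc)}$, and the overall prefactor $z^{(\textbf{m}_0,0)} = z^{(-\calE(\textbf{d}),0)}$ supplies the denominator in the cluster character. To finish, one needs the sub-lemma that every subrepresentation $C \subset D$ with $\dim C = \bc$ lies in $\calF(\textbf{m}_0)$, so that $\calG_{\calF(\textbf{m}_0), \bc}(D) = \Gr(\bc, D)$; equivalently, every sub-subrepresentation $B \subset C \subset D$ should satisfy $\textbf{m}_0(B) = -\chi(B, D) \leq 0$. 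This is established using the Hom-$\Ext$ vanishing results from Section \ref{rmk:ext_zero} applied to the specific AR-class of $D$ (preprojective, regular, or preinjective).

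The main obstacle is precisely this last identification $\calG_{\calF(\textbf{m}_0), \bc}(D) \cong \Gr(\bc, D)$. The non-negativity $\chi(B, D) \geq 0$ for all subrepresentations $B$ of $D$ does not hold in general — only under the geometric constraint that $\textbf{m}_0 = -\calE(\textbf{d})$ is a $g$-vector associated with $D$, so that $D$ is the unique indecomposable whose $\calP(1/2)$-semistable phase is $1/2$ at $\stab = \textbf{m}_0$. Verifying this case by case across the three components $\calP$, $\calR$, $\calI$ (with extra care in the regular component, where scalar multiples of a dimension vector can still be indecomposable, as noted in Section \ref{sec:prop_hall}) is the technical heart of the argument; once it is settled, the chain of equalities collapses to the Caldero-Chapoton formula.
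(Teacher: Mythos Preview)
Your overall architecture matches the paper's: compute the conjugation \eqref{eqn:thetafun} using the commutation relation \eqref{eqn:comm}, identify the result with the stack $\calG_{\calF(m_0)}(D)$, then apply $\chi$. The sub-lemma you isolate at the end (every subrepresentation of $D$ lies in $\calF(m_0)$) is indeed proved in the paper, and by exactly the mechanism you suggest: decompose $C\subset D$ into indecomposables $C_i$, note each $C_i$ is a predecessor of $D$, and invoke the lemmas of Section~\ref{rmk:ext_zero}. One minor correction: since $\textbf{m}_0$ lies in the cluster complex, $D$ is preprojective or preinjective, so the regular case for $D$ does not arise and needs no separate treatment.

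Where your sketch diverges from the paper is in the central simplification. You propose to expand $1_{\calF}(\textbf{m}_0)^{-1}$ as a geometric series and look for a telescoping cancellation. The paper avoids this entirely by a cleaner factorization trick. The key point, which you do not make explicit, is that for $C\in\calF(\textbf{m}_0)$ one has $\Ext^1(C,D)=0$, so the commutation factor is
\[
q^{-\textbf{m}_0\cdot\bc}=q^{\calE(\textbf{d})(\bc)}=q^{\chi(\bc,\textbf{d})}=q^{\dim\Hom(C,D)}.
\]
Interpreting $q^{\dim\Hom(C,D)}$ as the class of the affine space $\Hom(C,D)$, the product $q^{\dim\Hom(C,D)}[\calM^{\calF}_{\bc}\to\calM]$ becomes the stack of pairs $(C,\psi)$ with $C\in\calF(\textbf{m}_0)$ and $\psi:C\to D$ an \emph{arbitrary} morphism. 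Every such pair factors uniquely through its image: $0\to R\to C\to S\to 0$ with $R=\ker\psi$ and $S\hookrightarrow D$. The paper then checks (using the torsion-pair axiom and $D\in\calF(\textbf{m}_0)$) that $S\in\calF(\textbf{m}_0)$ as well, which yields the clean Hall-product identity
\[
z^{(\textbf{m}_0,0)}\,1_{\calF}(\textbf{m}_0)=1_{\calF}(\textbf{m}_0)\star\calG_{\calF(\textbf{m}_0)}(D)\,z^{(\textbf{m}_0,0)},
\]
so that the left factor $1_{\calF}(\textbf{m}_0)^{-1}$ cancels in one stroke with no series expansion. Your telescoping idea could presumably be made to work, but the paper's kernel/image factorization is both shorter and more transparent; in particular it makes manifest why an \emph{inclusion} into $D$ (rather than an arbitrary map) is what survives.
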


\begin{proof}

Recall in Lemma \ref{thm:torsionpair}:
\[ \calF (m_0) = \{ E \in \rep(Q) : \text{any subobject } F \subset E \Rightarrow  m_0 (F) \leq 0 \}. \] 
For any $C \subset D$, we have the inclusion $C \raw D$.
 Then $\Hom (C, D) \neq 0$.
 By decomposing $C$ into sum of indecomposables $C_i$,
 then $\dim \Hom (C_i, D) > 0$.
 This implies $C_i$ is a predecessor of $D$.
 By Lemma \ref{thm:lemmaext} and Lemma \ref{thm:lemmaPRI},
 we have $Ext^1 (C_i, D)=0$.
 Thus $\chi(\textbf{c}_i, \textbf{d}) = \dim \Hom (C_i, D) >0$.
 That is $m_0 (C) = - \calE(\textbf{d}) (\textbf{c}) <0$.
 Therefore we have $D \in \calF (m_0)$. 

Furthermore, for all $C \in \calF(m_0)$ indecomposable, by definition of $\calF(m_0)$,
 $\calE(d) (C) = \chi (\textbf{c},\textbf{d}) \geq 0$, where $\textbf{c} = \dim C$.
 By the discussion at the end of Section \ref{rmk:ext_zero},
 we obtain $\chi (\textbf{c},\textbf{d}) =\dim \Hom (C,D)$.

Recall the theta function we defined in \eqref{eqn:thetafun}
\[ \vartheta_{(\textbf{m}_0,0)}(\stab) = 1_{\calF} (\textbf{m}_0)^{-1} z^{(\textbf{m}_0,0)} 1_{\calF} (\textbf{m}_0).\]
Note that by the commutativity relation \eqref{eqn:comm}, we have
\[
	z^{(\textbf{m}_0,0)} [\calM^{\calF}_\textbf{c} \raw \calM ] = q^{ \calE(D) (\textbf{c})} [\calM^{\calF}_\textbf{c} \raw \calM ] z^{(\textbf{m}_0,0)}, 
\]
where objects in $[\calM^{\calF}_\textbf{c} \raw \calM ]$ are representations $C \in \calF (\textbf{m}_0)$ with dimension vector $\textbf{c}$. As noted above,
\[
	z^{(\textbf{m}_0,0)} [\calM^{\calF}_\textbf{c} \raw \calM ] = q^{ \dim \Hom (C,D)} [\calM^{\calF}_\textbf{c} \raw \calM ] z^{(\textbf{m}_0,0)}, 
\]
By viewing $q^{ \dim \Hom (C,D)} $ as $[\Aff^{\dim \Hom (C,D)}]$ in the Hall algebra, we have
\[
	 q^{\dim \Hom (C,D)} [\calM^{\calF}_\textbf{c} \raw \calM ] = [\calM^{\calF,d}_\textbf{c} \raw \calM],
\]
where objects in $[\calM^{\calF,d}_\textbf{c} \raw \calM]$ are pairs $(C , \psi)$ with $C\in \calF (m_0)$ having dimension vector $e$ and $\psi: C \raw D$ a map.

Now consider $(C , \psi)$ as an object in $[\calM^{\calF,d}_\textbf{c} \raw \calM]$.
As $C \in \calF (m_0)$, $R = \ker ( \psi ) \in \calF (m_0)$. 
Consider the short exact sequence
\begin{equation} \label{ses:theta}
0 \longrightarrow R \longrightarrow C \longrightarrow S \longrightarrow 0.
\end{equation}

The map $\psi$ induces an injective map $S \raw D$.
We wish to show $S \in \calF(m_0)$. 
Note that by the properties of torsion pair in Definition \ref{def:torsion}, 
we have the following exact sequence
\[
	0 \longrightarrow T \longrightarrow S \longrightarrow F \longrightarrow 0,
\]
where $T \in \calT(m_0)$, $F \in \calF(m_0)$. 
Since $D \in \calF(m_0)$, $\Hom (T, D) = 0$ from the definition of torsion pair.
So the exact sequence becomes
\[\begin{tikzcd}
 0 \arrow[r] & T  \arrow[r] \arrow[dr, "0"] &  S   \arrow[r] \arrow[d, hook] & F \arrow[r]  & 0 \\
             & &D &  &    
\end{tikzcd}\]
This forces the map $T \raw S$ to be the zero map in the exact sequence. Thus we have $S \cong F \in \calF(m_0)$. 

Going back to \eqref{ses:theta}, we have
\[
	[\calM^{\calF,d}_\textbf{c} \raw \calM] = 1_{\calF} (m_0) * \calG_{\calF (m_0)} (D),
\] 
where objects in $\calG_{\calF (m_0)} (D)$ are representations $C$ in $\calF (m_0)$ equipped with an inclusion into $D$. Then 
\begin{align*}
\vartheta_{m_0} 
 &=  1_{\calF} (m_0)^{-1} z^{(\textbf{m}_0,0)} 1_{\calF} (m_0) \\
 &= 1_{\calF} (m_0)^{-1} \sum_{e = \dim E} q^{\dim \Hom (E,D)} [\calM^{\calF}_e \raw \calM ] z^{(\textbf{m}_0,0)} \\
 & = 1_{\calF}(m_0)^{-1} * 1_{\calF} (m_0) * \calG_{\calF (m_0)} (D)  z^{(\textbf{m}_0,0)} \\
 & = \calG_{\calF (m_0)} (D)  z^{(\textbf{m}_0,0)} 
\end{align*}

 By applying $\chi$ in \ref{eqn:chi1} to $\calG_{\calF (m_0)} (D)$,
we get
\[
	\vartheta_{(\textbf{m}_0,0)} = \chi (\calG_{\calF (m_0)} (D))  z^{(\textbf{m}_0,0)}
	 = z^{(- \calE (D),0)} \sum_{0 \leq \textbf{c} \leq \textbf{d}} \chi (\Gr(c,D)) z^{(p^*(\textbf{c}), \textbf{c})}.
\]
which is exactly as in \eqref{eqn:newcc}.
 Therefore, we obtain the Caldero-Chapoton formula.
 
\end{proof}

\section{Stratification of quiver Grassmannian} \label{sec:strata}
We will repeat similar calculations as in the last section to give a stratification of quiver Grassmannian.

Consider an indecomposable quiver representation $D$ which is not regular.
 Then $m = -\calE (d)$ lies in the cluster complex.
 Now consider a broken line $\gamma : (\infty  ,0 ] \raw M_{\R} \setminus \{ 0 \}$ with endpoint $\enpt$ in the positive chamber and initial slope $-\calE(\textbf{d})$.
 We further assume the final slope of the broken line is  $ -\calE(\textbf{d}) + p^*(\textbf{c})$ for some $\textbf{c}$.
 Let $\gamma$ bend over the walls $\frakd_1, \dots , \frakd_s$ in the cluster complex and assume all the bendings are good.
 Denote by $\textbf{c}_i \in N^+$ the normal vectors of $\frakd_i$ for $i = 1, \dots s$.
 From the remarks in last sections,
 the walls $\frakd_i$ correspond to indecomposable representations $C_i$ with dimension vector $\textbf{c}_i$.

For $i=1, \dots , s-1$,
 if $\frakd_i$ intersect $\frakd_{i+1}$ along a co-dimension 2 joint $\frakj$ as in Section \ref{sec:AR},
 we decompose $\frakd_i$ into 3 connected components $\frakd_i^+$, $\frakj$ and $\frakd_i^-$ where $-p^*(\textbf{c}_i)$ lie on either $\frakd_i^+$ or $\frakj$.
 Let us further assume that $\gamma$ crosses from $\frakd_i^+$ to $\frakd_{i+1}$.
 If $\frakd_i$ does not intersect $\frakd_{i+1}$ along a co-dimensional 2 joint,
 we can take $\frakd_i^+ = \frakd_i$.
 By repeating same argument as in Section \ref{sec:AR}, we will have the following 2 cases.
 \begin{description}
 	\item[A] If $-p^*(\textbf{c}_i)$ lies on $\frakj$, then $\langle -p^*(\textbf{c}_i) ,\bc_{i+1} \rangle = - \{\bc_i,\bc_{i+1} \} =0$. We then have 
	\begin{align*}
	 	&\dim \Hom (C_i, C_{i+1}) = \dim \Hom (C_{i+1}, C_i)  \\
 	=&\dim \Ext^1 (C_i, C_{i+1}) =\dim \Ext^1 (C_{i+1}, C_i) = 0.
	\end{align*}	 	

 	\item[B] If $-p^*(\textbf{c}_i)$ lies on $\frakd_i^+$,
 	 and as $\gamma$ crosses good from $\frakd_i^+$ to $\frakd_{i+1}$, then 
 	 \[\langle -p^*(\textbf{c}_i) ,\bc_{i+1} \rangle = - \{\bc_i,\bc_{i+1} \} < 0.\]
 	 Therefore, \[\langle -p^*(\textbf{c}_i) ,\bc_{i+1} \rangle = - \dim \Ext^1(C_i, C_{i+1}) - \dim \Hom (C_{i+1}, C_i).\]
 	 and $\dim \Hom (C_i, C_{i+1}) = \dim \Ext^1 (C_{i+1}, C_i) = 0$.
 \end{description}
 
 From the identification between theta functions and the Caldero-Chapoton formula by equation \ref{eqn:newcc}, 
 the setup above is saying that we are considering the subrepresentations $E$ of dimension $e$ in $D$. 
 By the bendings of $\gamma$, we have 
 $e = \sum_i \lambda_i\bc_i$ for some $\lambda_i \in \N$.

\subsection{First Bending} \label{sec:firstbending}
As $\gamma$ has good bending over the first wall $\frakd_1$, we have
\[
	\langle \calE (d),\bc_1 \rangle > 0.
\]
As $C_i$ and $D$ are indecomposable, by lemmas in Section \ref{rmk:ext_zero}, we have
\[
	 \langle \calE (d),\bc_1 \rangle  = \chi (C_1, D) = \dim \Hom (C_1, D).
\]
We can describe the first bending as follows.

\begin{theorem} \label{thm:firstbending}
Let $\gamma$ be a broken line as defined above. 
 Consider the first bending of $\gamma$ over a general point $\stab_1$ on the wall $\frakd_1$ which corresponds to pre-projective/ pre-injective indecomposable representation $C_1$ of dimension vector $\textbf{c}_1$.
 Then the Hall algebra wall crossing automorphism is
\[ \Phi_{\frakD} (\frakd_1) z^{-(\calE (d),0)}= \sum_{\lambda} \calG^1_{\lambda_1,\bc_1}(D) z^{-(\calE (d),0)},\]
where $\calG^1_{\lambda,\bc_1} (D) = \{ \psi : V \raw D | V $ is a representation of dimension vector $\lambda\bc_1$, and $\ker \psi$ contains no subrepresentation of dimension vector proportional to $\textbf{c}_1 \}$.
\end{theorem}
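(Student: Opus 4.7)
The plan is to emulate the strategy used for the Hall algebra theta function (Theorem \ref{thm:sectheta}): unfold the wall-crossing conjugation, commute the monomial $z^{-(\calE(d),0)}$ past the Hall algebra factors to build in the relevant affine space of maps to $D$, and then recognize the remaining expression as the Hall product $1_{ss}(\stab_1)\star\sum_\lambda[\calG^1_{\lambda,\bc_1}(D)\to\calM]$ via a canonical filtration argument.

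First I would expand
\[
\Phi_{\frakD}(\frakd_1)z^{-(\calE(d),0)} \;=\; 1_{ss}(\stab_1)^{-1}\,z^{-(\calE(d),0)}\,1_{ss}(\stab_1).
\]
Since $C_1$ is pre-projective/pre-injective indecomposable with primitive dimension $\bc_1$, Section \ref{sec:prop_hall} gives $1_{ss}(\stab_1)=1+\sum_{k\geq 1}[BGL_k(C_1)\to\calM]$. Applying the commutation relation \eqref{eqn:comm} produces a $q$-weight of $k\,\calE(d)(\bc_1)=k\chi(\bc_1,d)$; the good crossing condition together with the $\Hom$/$\Ext$ analysis at the end of Section \ref{rmk:ext_zero} forces $\Ext^1(C_1,D)=0$, so $\chi(\bc_1,d)=\dim\Hom(C_1,D)$. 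Interpreting $q^{k\dim\Hom(C_1,D)}$ as the motivic class of the affine space $\Hom(C_1^{\oplus k},D)$, one obtains
\[
z^{-(\calE(d),0)}\cdot 1_{ss}(\stab_1) \;=\; \calH\cdot z^{-(\calE(d),0)},
\qquad
\calH := 1+\sum_{k\ge 1}[\calH_k\to\calM],
\]
where $\calH_k$ is the stack of pairs $(V,\psi)$ with $V\cong C_1^{\oplus k}$ and $\psi\in\Hom(V,D)$, mapping to $\calM$ by $(V,\psi)\mapsto V$.

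The heart of the proof is therefore the Hall-algebra identity
\[
\calH \;=\; 1_{ss}(\stab_1)\,\star\,\sum_{\lambda\ge 0}[\calG^1_{\lambda,\bc_1}(D)\to\calM].
\]
I would prove this by constructing a canonical decomposition of $(V,\psi)\in\calH_k$: let $U\subseteq\ker\psi$ be the image of the evaluation map $\Hom(C_1,\ker\psi)\otimes_\C C_1\to\ker\psi$. Using that $\bc_1$ is a real root, so $\End(C_1)=\C$ and $\Ext^1(C_1,C_1)=0$, one shows $U\cong C_1^{\oplus j}$ where $j=\dim_\C\ker\bigl(\psi_*:\Hom(C_1,V)\to\Hom(C_1,D)\bigr)$, and moreover $U$ is a direct summand of $V$, so $V/U\cong C_1^{\oplus(k-j)}$. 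Factoring $\psi$ as $\psi=\psi'\circ\pi$ with $\psi':V/U\to D$, the long exact sequence
\[
0\raw\Hom(C_1,U)\raw\Hom(C_1,V)\raw\Hom(C_1,V/U)\raw\Ext^1(C_1,U)=0
\]
combined with the definition of $U$ gives $\ker\psi'_*=\ker\psi_*/\Hom(C_1,U)=0$, so $(V/U,\psi')\in\calG^1_{\lambda,\bc_1}$ with $\lambda=k-j$. The inverse direction reconstructs $V$ as the middle term of the associated SES, where $\Ext^1(V/U,U)=0$ (again by rigidity) ensures uniqueness up to the relevant automorphism groupoid. Plugging the identity back into Step 1 yields $\Phi_{\frakD}(\frakd_1)z^{-(\calE(d),0)}=\sum_\lambda[\calG^1_{\lambda,\bc_1}(D)\to\calM]z^{-(\calE(d),0)}$. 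The filtration statement $0\subset V_1=C_1^{\oplus\lambda_1}$ then records precisely the $U$-factor of the SES at the $\lambda_1$-stratum.

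The main obstacle is making Step 3 rigorous as an identity of stacks rather than merely a $\C$-point bijection: one must verify that the ``maximal $C_1$-sum subrep of $\ker\psi$'' construction is algebraic in families and that the compatible groupoid structure on both sides (the $GL_k(\C)$-automorphisms of $C_1^{\oplus k}$ on the left, and the block-upper-triangular automorphisms of the SES on the right, as described in Section \ref{sec:prop_hall}) matches correctly. The rigidity inputs $\End(C_1)=\C$ and $\Ext^1(C_1,C_1)=0$—which hold because $C_1$ is an indecomposable real root representation—are what make the canonical decomposition well-defined and force the ambient Hall product to land inside the sum-of-$C_1$'s locus in $\calM$, rather than producing extraneous unstable middle terms.
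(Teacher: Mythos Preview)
Your proof is correct and follows the same three-step structure as the paper: expand the conjugation, commute $z^{-(\calE(d),0)}$ through $1_{ss}(\stab_1)$ to produce the stack $\calH$ of pairs $(V,\psi)$ with $V\cong C_1^{\oplus k}$ and $\psi:V\to D$, and then establish the Hall-algebra factorization $\calH = 1_{ss}(\stab_1)\star\sum_\lambda[\calG^1_{\lambda,\bc_1}(D)\to\calM]$. The one substantive difference is in how the canonical subobject of $\ker\psi$ is extracted. You take the $C_1$-trace $U=\operatorname{im}\bigl(\Hom(C_1,\ker\psi)\otimes C_1\to\ker\psi\bigr)$ and appeal directly to the rigidity of the real-root representation $C_1$ (namely $\End(C_1)=\C$ and $\Ext^1(C_1,C_1)=0$) to identify $U\cong C_1^{\oplus j}$ and split it off. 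The paper instead chooses an auxiliary stability parameter $\stab'$ just off the wall $\frakd_1$ and takes $T$ to be the torsion part of $\ker\psi$ for the torsion pair $(\calT(\stab'),\calF(\stab'))$, arguing via semistability of $C_1'$ at $\stab_1$ that $\dim T$ is proportional to $\bc_1$. Your route is more hands-on and makes the rigidity inputs explicit; the paper's route stays within the stability-scattering formalism and does not need to invoke $\Ext^1(C_1,C_1)=0$ directly, at the cost of the auxiliary choice of $\stab'$. Both identify the same subobject and yield the same factorization.
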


\begin{proof}
By definition, the wall crossing at $\stab_1$ is
\begin{align*}
 \Phi_{\frakD} (\frakd_1) z^{-(\calE (d),0)} 
=& 1_{ss}(\stab_1) ^{-1} z^{-(\calE (d),0)} 1_{ss}(\stab_1) 
\end{align*}
Then we employ \eqref{eqn:1ss}
\begin{align*}
=&  1_{ss}(\stab_1) ^{-1} z^{-(\calE (d),0)} (1+\sum_{\ell \geq 1} [BGL_{\ell}(C_1) \raw \calM]) \\
=&  1_{ss}(\stab_1) ^{-1} (1+ \sum_{\ell \geq 1} q^{\ell \cdot \chi(\bc_1 , D)} [BGL_{\ell}(C_1) \raw \calM] ) z^{-(\calE (d),0)}\numberthis \label{eqn:commmute}\\
=&  1_{ss}(\stab_1) ^{-1} (1+ \sum_{\ell \geq 1} q^{\ell \dim \Hom(\bc_1 , D)} [BGL_{\ell}(C_1) \raw \calM] ) z^{-(\calE (d),0)} \\
=&  1_{ss}(\stab_1) ^{-1} \sum_{\ell \geq 0} 1_{ss}^D(\ell, \stab_1) z^{-(\calE (d),0)},\numberthis  \label{eqn:affinespace}
\end{align*}
where $1_{C_1}^D(\ell ,\stab_1)$ is the moduli space of semistable representations $C_1 '$ with a map $C_1 ' \raw D$, where $C_1 '$ has dimension vector $\ell\bc_1$.
 \eqref{eqn:commmute} follows from the commutativity relation in \eqref{eqn:comm}.
 We obtain \eqref{eqn:affinespace} by visualizing $q^{\ell \cdot \dim \Hom(\bc_1 , D)} $ as a vector bundle of rank $= \ell  \dim \Hom(\bc_1 , D)$.

We now need to understand $1_{ss}^D(\ell ,\stab_1)( \Spec \C)$.

Given $\psi: C_1 ' \raw D$,
 we consider $\ker \psi$.
 Take $\stab'$ to be a point very close to $\frakd_1$ in the positive $\textbf{c}_1$ direction.
 By Definition \ref{def:torsion} and Theorem \ref{thm:torsionpair}, we have
 \[ 0 \longrightarrow T \longrightarrow \ker \psi \longrightarrow F \longrightarrow 0 \]
where $T \in \calT (\stab')$, $F \in \calF (\stab')$.
 We claim $T $ has dimension vector proportional to $\dim C_1$.
 Firstly, as $T \subseteq \ker \psi \subseteq C_1'$, we have $\stab_1 (T) \leq 0$ since $C_1'$ is $\stab_1$-semistable.
 And by definition of $\calT$, $\stab' (T) >0$.
 As $\stab'$ can be arbitrary close to the wall $\frakd_1$, we have $\stab_1 (T) =0$. 

 Consider the quotient $C_1 = C_1' / T$.
 As both $C_1' $ and $T$ have dimension vectors as multiplies of $\textbf{c}_1$,
 $C_1 = \lambda\bc_1$ for some $\lambda$.
 Furthermore, $\psi$ induces a map $C_1 \raw D$ with kernel in $\calF(\stab')$.
 Thus we have the diagram
\[ \begin{tikzcd}
 0 \arrow[r] & T \arrow[r] &  C_1 ' \arrow[r] \arrow[d] &    C_1 \arrow[r] \arrow[dl] & 0 \\
             &  & D &   &    
\end{tikzcd}\]
where the row is an exact sequence.

From the exact sequence, we have
\[ 1_{ss}^D (\ell, \stab_1) = \sum_{h}  [BGL_{\ell}(C_1)  \raw \calM] * [\calG^1_{
\ell -h,\bc_1} (D) \raw \calM],  \]
where $\calG^1_{\ell-h,\bc_1}$ denotes denotes the stack of representations $C_1$ with dimension vector $(\ell -h )\bc_1$ and there exist a homomorphism $C_1 \raw D$ with kernel in $\calF(\stab')$.
 Then

\begin{align*}
 \sum_{\ell} 1_{ss}^D (\ell, \stab_1)
=&\sum_{\ell} \sum_{h}  [BGL_h(C_1) \raw \calM]  * 
	[\calG^1_{\ell-h,\bc_1} (D) \raw \calM] \\
=& \sum_{h}[BGL_h (C_1) \raw \calM]   * 
	\sum_{\ell} [\calG^1_{\ell-h,\bc_1} (D) \raw \calM] \\
=& 1_{ss} (\stab_1) *  \sum_{\lambda} [\calG^1_{\lambda,\bc_1} (D) \raw \calM] . 
\end{align*}
Therefore, we have
\[ \Phi_{\frakD} (\frakd_1) z^{-(\calE (d),0)} = 
	 1_{ss}(\stab_1) ^{-1} \sum_{\ell \geq 0} 1_{ss}^D(\ell, \stab_1) z^{-(\calE (d),0)}
	=  \sum_{\lambda } [\calG^1_{\lambda,\bc_1} (D) \raw \calM] z^{-(\calE (d),0)}.
\]

If $\textbf{c}_1$ is not regular, then by applying $\chi$ in \ref{eqn:chi1}, we have
\begin{align*}
	\chi ( \Phi_{\frakD} (\frakd_1) z^{-(\calE (d),0)}) 
	&=  z^{-(\calE (d),0)} \sum_{\lambda} \chi ( [\calG^1_{\lambda,\bc_1} (D) \raw \calM]) \\
	& =  z^{-(\calE (d),0)} \sum_{\lambda} \chi \left( \Gr( \lambda , \Hom (\textbf{c}_1, D)) \right) z^{\lambda_1 (p^*(\lambda\bc_1),\lambda_1\bc_1)}  
\end{align*}
This is the path-ordered product defined in \ref{eqn:pathordered}.

 Now we are ready to move forward to the second bending.
 As $\textbf{c} = \sum \lambda_i\bc_i$, we will take the term
\[
	[\calG^1_{\lambda_1,\bc_1} (D) \raw \calM]z^{-(\calE (d),0)}. 
\]
Applying the integration map, we have 
\[
	\chi([\calG^1_{\lambda_1} (D) \raw \calM]) z^{-(\calE (d),0)} = \chi \left( \Gr( \lambda_1 , \Hom (\textbf{c}_1, D)) \right) z^{-(\calE (d),0)+ \lambda_1(p^*(\bc_1),\bc_1)}.
\]

Let us consider the $q$-binomial coefficients $\dbinom{a}{b}_q = \frac{(q^a-1) \cdots (q^{a-b+1}-1)}{(q^b-1) \cdots (q-1)}$.
 Then the above equation is
 \[
	\chi([\calG^1_{\lambda_1} (D) \raw \calM]) z^{-(\calE (d),0)} = \dbinom{\dim \Hom (\textbf{c}_1, D)}{\lambda_1}_q z^{-(\calE (d),0)+ (p^*(\lambda_1 \bc_1),\lambda_1 \bc_1)}.
\]
Taking the limit $q \raw 1$ will give us the usual broken line attaching monomial $\dbinom{\langle \calE(d),\bc_1 \rangle}{\lambda_1} z^{-(\calE (d),0)+ (p^*(\lambda_1 \bc_1),\lambda_1 \bc_1)}$. 

Note that after the first bending, if we take $V_1 = C_1^{\oplus \lambda_1}$, we have
\[
0 \subset V_1
\]
as the first step of a filtration of $E \subset D$.
\end{proof}
\subsection{Second bending} \label{sec:2bending}
In this section and the next section, we will prove Theorem \ref{thm:morebending}. We will first state the statement for the second bending:

\begin{theorem} \label{thm:recapsecondbend}
Assume now $\gamma$ is taking the second bending from $\frakd_1^+$ to $\frakd_2$.
From the first bending, we obtain the filtration
 $
0 \subset V_1,
$
 where $V_1 = C_1^{\oplus \lambda_1}$.
Then the Hall algebra monomial associated to the second linear piece of the broken line would be
\[
[\calG^2_{s,\bc_2} (D) \raw \calM] z^{-(\calE (d),0)},
\]
where $[\calG^2_{s,\bc_2} (D) \raw \calM]$ 
is space having the Poincar\'{e} polynomial as
 \[[\Aff^{\lambda_2 \dim \Ext^1 (C_1^{\oplus \lambda_1})} \times \Gr(\lambda_2, \Hom(C_2, D/ C_1^{\oplus \lambda_1}) - \Ext^1 (C_1^{\oplus \lambda_1})) 
].
 \]
 After the second bending, 
 we obtain the filtration 
\begin{equation*} 
	 0 \subset V_1 \subset V_2, 
\end{equation*}
where $V_2 / V_{1} = C_2^{\oplus \lambda_2}$.
\end{theorem}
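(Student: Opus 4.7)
The plan is to mimic the proof of Theorem~\ref{thm:firstbending}, now conjugating the Hall algebra element $X_1 := [\calG^1_{\lambda_1,\bc_1}(D) \raw \calM]\, z^{-(\calE(d),0)}$ (the output of the first bending) by $\Phi_{\frakD}(\frakd_2) = 1_{ss}(\stab_2)$. Because both crossings are positive, Theorem~\ref{thm:arquiver} forces $C_2$ to be a predecessor of $C_1$ in the AR quiver; since $\Hom(C_1,D)\neq 0$ already, transitively $C_2$ is a predecessor of $D$, so Lemma~\ref{thm:lemmaext} yields $\Ext^1(C_2,D)=0$ and hence $\chi(\bc_2,d)=\dim\Hom(C_2,D)$. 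Moreover, since $C_2$ is preprojective or preinjective, \eqref{eqn:1ss} gives $1_{ss}(\stab_2)=1+\sum_{\ell\geq 1}[BGL_\ell(C_2)\raw\calM]$, exactly as in the first-bending proof.

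I would then push $z^{-(\calE(d),0)}$ past each $[BGL_\ell(C_2)\raw\calM]$ via the commutation relation \eqref{eqn:comm}: the resulting factor $q^{\ell\chi(\bc_2,d)}[BGL_\ell(C_2)\raw\calM]$ may be identified with the stack $1^D_{C_2}(\ell,\stab_2)$ of pairs $(C'_2,\psi'\colon C'_2\to D)$ via the affine-bundle trick of \eqref{eqn:affinespace}. Multiplying by $[\calG^1\raw\calM]$ on the left and by $1_{ss}(\stab_2)^{-1}$ on the far left, the torsion-pair factorisation $1^D_{C_2}(\ell,\stab_2) = 1_{ss}(\stab_2)\star\calG$ (the analogue of the final step in the proof of Theorem~\ref{thm:firstbending}) cancels the outer $1_{ss}(\stab_2)^{\pm 1}$ and leaves, for each $\lambda_2$, a stack $\calG^2_{\lambda_2,\bc_2}(D)$ of pairs $(V_2,\psi_2\colon V_2\hookrightarrow D)$ equipped with a filtration $0\subset V_1\subset V_2$ in which $V_1\in\calG^1_{\lambda_1,\bc_1}(D)$ and $V_2/V_1\cong C_2^{\oplus\lambda_2}$.

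To extract the Poincar\'e polynomial I would parametrise these data in two stages. For fixed $V_1\hookrightarrow D$, a subrepresentation $V_2\subset D$ with $V_2/V_1\cong C_2^{\oplus\lambda_2}$ corresponds to an embedded copy of $C_2^{\oplus\lambda_2}$ inside $D/V_1$, which is controlled by a Grassmannian of homomorphisms $C_2\to D/V_1$. Lifting such an embedding to a genuine subrepresentation of $D$ is then obstructed by extension classes in $\Ext^1(V_1,C_2^{\oplus\lambda_2})\cong\Ext^1(V_1,C_2)^{\lambda_2}$. Applying $\Hom(V_1,-)$ to $0\to C_2^{\oplus\lambda_2}\to D/V_1\to D/V_2\to 0$ and invoking the AR-vanishings from Section~\ref{rmk:ext_zero} ($\Hom(V_1,C_2)=0$ in case~B since $\Hom(C_1,C_2)=0$, and $\Ext^1(V_1,D)=0$ because $V_1$ is a predecessor of $D$) reduces the dimension count to the asserted form: an $\Aff^{\lambda_2\dim\Ext^1(V_1,C_2)}$-bundle over $\Gr(\lambda_2,\Hom(C_2,D/V_1)-\Ext^1(V_1,C_2))$. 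The short exact sequence $0\to V_1\to V_2\to C_2^{\oplus\lambda_2}\to 0$ defining $V_2$ then delivers the promised filtration $0\subset V_1\subset V_2$.

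The main obstacle is the last step: unlike the first bending, where $\ker(C_1'\to D)$ was directly controlled by the torsion pair at $\stab_1$, here the new piece $C_2^{\oplus\lambda_2}$ must be compatible with the pre-existing inclusion $V_1\hookrightarrow D$, and the interplay between $\Hom(C_2,D/V_1)$ and $\Ext^1(V_1,C_2)$ demands careful six-term-sequence bookkeeping that repeatedly uses the AR-based vanishing lemmas of Section~\ref{rmk:ext_zero}. Once this local-to-global lift decomposition is in place, assembling the Poincar\'e polynomial of $\calG^2_{\lambda_2,\bc_2}(D)$ and reading off the filtration are essentially routine.
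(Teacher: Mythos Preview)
Your proposal has a genuine gap at the cancellation step, and the paper's argument is structurally different from what you sketch.

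The torsion-pair factorisation you invoke does not cancel the outer $1_{ss}(\stab_2)^{\pm 1}$. After commuting $z^{-(\calE(d),0)}$ to the right you obtain
\[
1_{ss}(\stab_2)^{-1}\star[\calG^1_{\lambda_1,\bc_1}(D)\raw\calM]\star 1^D_{ss}(\ell,\stab_2)\,z^{-(\calE(d),0)} .
\]
Even if you factor $1^D_{ss}(\ell,\stab_2)=1_{ss}(\stab_2)\star(\text{something})$, that factor of $1_{ss}(\stab_2)$ sits to the \emph{right} of $[\calG^1]$, while $1_{ss}(\stab_2)^{-1}$ sits to the \emph{left}; the Hall product is non-commutative and $[\calG^1]$ involves $C_1^{\oplus\lambda_1}$, not objects in the $\bc_2$-direction, so nothing collapses. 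This is exactly the obstruction that makes the second bending harder than the first: in Theorem~\ref{thm:firstbending} there was no element wedged between $1_{ss}(\stab_1)^{-1}$ and $1^D_{ss}(\ell,\stab_1)$.

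The paper deals with this by abandoning the torsion-pair trick entirely and instead expanding $1_{ss}(\stab_2)^{-1}$ via \eqref{eqn:inverse} as an alternating sum $\sum_k(-1)^k\prod_{l=1}^k[BGL_{r_l}(C_2)\raw\calM]$. The degree-$s\bc_2$ contribution is then organised into signed pairs
\[
\pm\bigl(B_k\star\calG^1\;-\;B_{k-1}\star\calG^1\star B'\bigr),
\]
where $B_k$ is a $k$-fold product of $[BGL_{r_l}(C_2)\raw\calM]$ with $\sum r_l=s$ and $B'$ is its last factor. The first term is computed via $\Ext^1(C_1^{\oplus\lambda_1},C_2^{\oplus s})$ modulo a block upper-triangular group; the second uses $\Ext^1(C_2,C_1)=0$ (case~B at the start of Section~\ref{sec:strata}) to split the relevant sequence and produce a $\Hom(C_2^{r_k},D/C_1^{\oplus\lambda_1})$ factor. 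After applying $\chi$ and summing over all ordered partitions of $s$, one obtains the closed form $q^{s\gamma}\binom{\eta-\gamma}{s}_q$ with $\eta=\dim\Hom(C_2,D/V_1)$ and $\gamma=\dim\Ext^1(V_1,C_2)$; this is where the $\Aff^{\lambda_2\gamma}\times\Gr(\lambda_2,\eta-\gamma)$ interpretation comes from.

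Your geometric paragraph also misidentifies the role of $\Ext^1(V_1,C_2)$. Given $V_1\hookrightarrow D$, a subrepresentation $V_2\subset D$ containing $V_1$ with $V_2/V_1\cong C_2^{\oplus\lambda_2}$ is simply the preimage of a sub of $D/V_1$; there is no lifting obstruction. In the paper the $\Ext^1(V_1,C_2)$ term arises not as an obstruction but from the Hall product $B_k\star\calG^1$, which classifies extensions $0\to C_2^{\oplus s}\to Y\to C_1^{\oplus\lambda_1}\to 0$ (note the order is opposite to your filtration). The filtration $0\subset V_1\subset V_2$ only emerges after the alternating sum has been evaluated.
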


\begin{proof}
Now we consider $\gamma$ crosses a second wall $\frakd_2$.
 By Definition \ref{hall_brokendef} of Hall algebra broken lines, $[\calG^1_{\lambda_1,\bc_1} (D) \raw \calM]z^{-(\calE (d),0)}$ is attached to the linear piece of $\gamma$ after the first bending.
 In the usual broken line, the attached monomial is $\chi \left( \Gr( \lambda_1 , \Hom (\textbf{c}_1, D)) \right) z^{-(\calE (d),0)+ \lambda (p^*(\textbf{c}_1), \bc_1)}$.
 As we are assuming the crossing of $\gamma$ over $\frakd_2$ is good, we have
 \[
 \langle -\calE (D)+ p^*(\textbf{c}_1), -\textbf{c}_2 \rangle \geq 0,
 \]
 i.e.
 \[
  \calE(D)(\textbf{c}_2) - \{\textbf{c}_1,\bc_2\} \geq 0.
 \]
 From our assumption that $\gamma$ goes from $\frakd_1^+$ to $\frakd_2$ and the description in the beginning of Section \ref{sec:strata},
 we have 
 \[
  - \{\bc_1,\bc_2 \} \leq 0.
 \]
 This implies $\calE(D)(\textbf{c}_2) \geq 0$. Therefore 
 \[\calE(D)(\textbf{c}_2) = \chi (\textbf{c}_2, d) = \dim \Hom (\textbf{c}_2, D).\]

We can then apply the wall crossing for the second bending at a general point $\stab_2$ on $\frakd_2$  on $[\calG^1_{\lambda_1,\bc_1} (D) \raw \calM]z^{-(\calE (d),0)} $.
\begin{align*}
	&\Phi_{\frakD} (\frakd_2) \left( [\calG^1_{\lambda_1,\bc_1} (D) \raw \calM]  z^{-(\calE (d),0)} \right)\\
	 =& 1_{ss}(\stab_2) ^{-1}  \star [\calG^1_{\lambda_1,\bc_1} (D) \raw \calM] z^{-(\calE (d),0)} \star 1_{ss}(\stab_2) \\
	=& 1_{ss}(\stab_2) ^{-1} \star  \sum_{\ell} [\calG^1_{\lambda_1,\bc_1} (D) \raw \calM]
		\star q^{\dim (\Hom (\textbf{c}_2, D))}  1_{ss}(\ell, \stab_2) z^{-(\calE (d),0)} \\
	=& 1_{ss}(\stab_2) ^{-1}  \sum_{\ell} [\calG^1_{\lambda_1,\bc_1} (D) \raw \calM]
	 	\star 1^D_{ss} (\ell, \stab_2) z^{-(\calE (d),0)}
\end{align*}
where 
$1_{ss}^D(\ell, \stab_2)$ is the moduli space of semistable representation $\textbf{c}_2 '$ with a map $\textbf{c}_2 ' \raw D$, where $\textbf{c}_2 '$ is a representation with dimension vector $\ell\bc_2$. 

We will now put \eqref{eqn:inverse} into the above calculation, i.e. we need to investigate 
\begin{equation} \label{eqn:second}
 \sum_{\ell} \left(1 + \sum_k (-1)^k \prod_{l=1}^k [BGL_{r_l}(\textbf{c}_2) \raw \calM] \right) \star   [\calG^1_{\lambda_1,\bc_1} (D) \raw \calM]  \star 1^D_{ss} (\ell \stab_2)
\end{equation}
 \eqref{eqn:second} looks complicated. Actually it is very nice at each degree.
 We will first fix $\ell$ and $k$. 
 Without loss of generality, we write $\calG^1$ as $[\calG^1_{\lambda_1,\bc_1} (D) \raw \calM]$. 
 Let us look at the terms of dimension vector $s \bc_2$ in \eqref{eqn:second}.
 In order to attain $s\textbf{c}_2$, we can have two choices
 \begin{enumerate}
 	\item $B_k \star \calG^1 \star 1$ \label{front}
 	\item $B_{k-1} \star \calG^1 \star B'$, \label{back}
 \end{enumerate}
 where $B_k$ denote multiplying $k$ many classifying spaces $[BGL_{r_l} \raw \calM]$, for some $r_1, \dots, r_k \in \N$ such that $r_1 + \dots + r_k =s$,
  and
 $B'$ is an object in $1^D_{ss} (\ell, \stab_2)$. 
 Note that the inverse $1_{ss}(\stab_2) ^{-1}$ is an alternating sum.
 So elements in the forms of (\ref{front}) and (\ref{back}) differ by a sign.
 Therefore, we can group the summands for $s\textbf{c}_2$ into pairs
 \begin{equation} \label{eqn:degreek}
 \pm \left( B_k \star \calG^1 - B_{k-1} \star \calG^1 \star B' \right),  
 \end{equation}
 where $B_k = \prod_{l=1}^k [BGL_{r_l}(\textbf{c}_2) \raw \calM]$,
 $B_{k-1} = \prod_{l=1}^{k-1} [BGL_{r_l} (\textbf{c}_2) \raw \calM]$
 and
 $B'$ is the last multiple of $B_k$: $[BGL_{r_k} (\textbf{c}_2) \raw \calM]$.
 
 First consider
 \[ B_k \star \calG^1= \prod_{l=1}^k [BGL_{r_l}(\textbf{c}_2) \raw \calM] \star \calG^1 .\]

The product in $B_k$ means that we partition $s$ into $(r_1, \dots, r_k)$.
 Therefore, we are looking at the diagram
 \begin{equation}
  \begin{tikzcd}
 0 \arrow[r] &
\bc_2^{\oplus r_1} \oplus \cdots \oplus\bc_2^{\oplus r_k} \arrow[r] &
 	Y \arrow[r] &
 	C_1^{\lambda_1} \arrow[r] \arrow[d] &0\\
 & & & D & 
 \end{tikzcd}
 \end{equation}
with $Y$ such that the row is exact.

As the exact sequence may not split, the term $Y$ is actually 
 \begin{equation} \label{eqn:extterm}
 \left[ \frac{Ext^1 (C_1^{\oplus \lambda_1},\bc_2^{\oplus r_1} \oplus \cdots \oplus\bc_2^{\oplus r_k})}{G}
 \raw \calM
 \right]
 \end{equation}
 where $G$ is the group of matrices of the form
 $\left(
 \begin{array}{c c c c}
 \GL_{r_1} & * & * & * \\
 0 & \GL_{r_2} & * & * \\
 0 & 0 & \ddots & * \\
 0 & 0 & 0 & \GL_{r_k}
 \end{array}
 \right)$
  as noted at the end of Section \ref{sec:prop_hall}.
  
Next we consider the second term $B_{k-1} \star \calG^1 \star B' $ in \eqref{eqn:degreek}.
We will break down into two steps.
As the multiplication is associative, $B_{k-1} \star \calG^1 \star B' = B_{k-1} \star (\calG^1 \star B')$. 
Therefore, we will first look at $\calG^1 \star B'$ which leads us to \eqref{diag:1} while the whole term $B_{k-1} \star (\calG^1 \star B')$ will be associated to \eqref{diag:2}.
First, $\calG^1 \star B'$ gives
 \begin{equation} \label{diag:1}
  \begin{tikzcd}
 0 \arrow[r] &
 C_1^{\lambda_1} \arrow[r]\arrow[d]  &
 	U_1 \arrow[r] &
 	\textbf{c}_2^{r_k} \arrow[d] \arrow[r]&0\\
 &D & &D  & 
 \end{tikzcd}
 \end{equation}
for some $U_1$.
 We are looking at fibres of projection to $\calG^1$. Therefore we fix a map $C_1^{\lambda_1} \raw D$ in $\calG^1$.
 
 From the discussion in the beginning of Section \ref{sec:strata},
 we have $\Ext^1 (\textbf{c}_2, C_1) = 0$. Thus the exact sequence above is split. 
 Therefore the middle term is $U_1 = C_1^{\lambda} \oplus\bc_2^{r_k}$.
 Notice that there is an automorphism of the exact sequence, namely 
 \[
   \begin{tikzcd}
 0 \arrow[r] &
 C_1^{\lambda} \arrow[r] \arrow[d, equal] & C_1^{\lambda} \oplus\bc_2^{r_k} \arrow[r, "g"] \arrow[d, "\pi"]&\bc_2^{r_k} \arrow[dll] \arrow[d, equal] \arrow[r]&0\\
 0 \arrow[r] &C_1^{\lambda} \arrow[r, "i"] & C_1^{\lambda} \oplus\bc_2^{r_k} \arrow[r] &\bc_2^{r_k} \arrow[r] &0
 \end{tikzcd}
 \]
 with $g \circ \pi \circ i$ is an identity.
 We obtain the object
\begin{equation} \label{eqn:firstterm}
 \left[
 \frac{\Hom(C_2^{r_k}, D) / \Hom(C_2^{r_k}, C_1^{\oplus \lambda_1})}{\GL_{r_k}}
 \raw \calM \right]
 =
 \left[
 \frac{\Hom(\textbf{c}_2^{r_k}, D/ C_1^{\oplus \lambda_1})}{\GL_{r_k}}
 \raw \calM \right].
\end{equation} 

The second multiplication $B_{k-1} \star ( \calG^1 \star B')$ gives 
 \begin{equation} \label{diag:2}
  \begin{tikzcd}
 0 \arrow[r] &
 C_2^{\oplus r_1} \oplus \cdots \oplus C_2^{\oplus r_{k-1}} \arrow[r] &
 	U_2  \arrow[r] & C_1^{\lambda} \oplus C_2^{r_k} \arrow[r] \arrow[d] &0 \\
  & & & D & 
 \end{tikzcd}
 \end{equation}
 These exact sequences are classified by $\Ext^1(C_1^{\lambda} \oplus C_2^{r_k},C_2^{\oplus r_1} \oplus \cdots \oplus C_2^{\oplus r_{k-1}} ) = \Ext^1(C_1^{\lambda},C_2^{\oplus l_1} \oplus \cdots \oplus C_2^{\oplus l_{r-1}} )$ as $C_2$ is non-regular.
 Combining with \eqref{eqn:firstterm} the product gives us
 \begin{equation} \label{eqn:secondterm}
 \left[ \frac{\Hom(C_2^{r_k}, D/ C_1^{\oplus \lambda_1}) 
 \times \Ext^1(C_1^{\lambda_1},C_2^{\oplus r_1} \oplus \cdots \oplus C_2^{\oplus r_{k-1}} )}{T} 
 \raw \calM \right],
 \end{equation}
 where $T$ is as in \eqref{eqn:extterm}.

 The value of \eqref{eqn:degreek} would be the difference between \eqref{eqn:extterm} and  \eqref{eqn:secondterm}, i.e.
 \begin{align*}
  &\pm \Bigg( \left[ \frac{\Ext^1 (C_1^{\oplus \lambda_1}, C_2^{\oplus r_1} \oplus \cdots \oplus C_2^{\oplus r_k})}{T}
 \raw \calM
 \right]
 - \\
  &\left[ \frac{\Hom(C_2^{r_k}, D/ C_1^{\oplus \lambda_1}) 
 \times \Ext^1(C_1^{\lambda_1},C_2^{\oplus r_1} \oplus \cdots \oplus C_2^{\oplus r_{k-1}} )}{T} 
 \raw \calM \right] \Bigg)
 \end{align*}

Let $\eta = \dim \Hom(C_2, D/ C_1^{\oplus \lambda_1})$ and $\gamma = \dim \Ext^1 (C_1^{\oplus \lambda_1}, C_2)$.
 Recall $s = l_1 + \cdots + l_r $ as we are counting at degree $s$.
 Applying $\chi$ to the term above gives us
 \begin{align*}
   &\pm \frac{q^{s \gamma}-q^{(r_1 + \cdots + r_{k-1}) \gamma+r_k h} }{\left( \prod_{l=1}^k q^{r_l(r_l-1)/2} \prod_{t=1}^{r_l} (q^t-1) \right)  \prod_{u<v} q^{r_u r_v}} \\
   =& \pm \frac{q^{s \gamma} (q^{r_k \eta- r_k \gamma} - 1)  }{\left( \prod_{l=1}^k q^{r_l(r_l-1)/2} \prod_{t=1}^{r_l} (q^t-1) \right)  \prod_{u<v} q^{r_u r_v}}.
 \end{align*}

One can show that after summing over all the partitions of $s$,
 the multiplities $s$ term of \eqref{eqn:second} after applying $\chi$ would be
 \[q^{s \gamma} \dbinom{\eta-\gamma}{\lambda_2}_q.\]
 
Let $S^2$ be the space of the second bending with fixing $C_1^{\lambda_1} \raw D$.
 Then $S^2$ would have the same Poincar\'{e} polynomial as the following space:
\[
\sum_{s} [\Aff^{s \gamma} \times \Gr(s, \Hom(C_2, D/ C_1^{\oplus \lambda_1}) - \Ext^1 (C_1^{\oplus \lambda_1}, C_2))],
\]
where if $V$, $W$ are vector space, we use the relation $V-W$ to denote the vector space of dimension $\dim V - \dim W$.
 We now take the map $C_1^{\lambda_1} \raw D$ back into account,
 and denote $[\calG^2_{s,\textbf{c}_2} \raw \calM ]$ as the degree $s$ for the space we obtained after the second bending.
 Then $[\calG^2_{s,\textbf{c}_2} \raw \calM ]$ have the same Poincar\'{e} polynomial as
\[
[\Aff^{s \dim \Ext^1 (C_1^{\oplus \lambda_1})} \times \Gr(s, \Hom(C_2, D/ C_1^{\oplus \lambda_1}) - \Ext^1 (C_1^{\oplus \lambda_1})) 
\times
\Gr(\lambda_1, \Hom (C_1, D))
].
\]

Therefore, the Hall algebra wall crossing at second bending would then be
\begin{align*}
 \Phi_{\frakD} (\frakd_2) \left( [\calG^1_{\lambda_1,\bc_1} (D) \raw \calM]  z^{-(\calE (d),0)} \right)
=\sum_{s} [\calG^2_{s,\bc_2} (D) \raw \calM] z^{-(\calE (d),0)}.
\end{align*}

From the construction, we take $\lambda_2$ pieces of $\textbf{c}_2$.
 Thus the attaching Hall algebra monomial is
 \[
 [\calG^2_{\lambda_2,\bc_2} (D) \raw \calM] z^{-(\calE (d),0)}.
 \]
 Applying integration map will give us
 \[
 q^{\lambda_2 \gamma} \dbinom{\chi(\textbf{c}_2, d-\lambda_1\bc_1)}{\lambda_2}_q \dbinom{\chi(\textbf{c}_1, d)}{\lambda_1}_q
 z^{-\calE (D)+ (p^*(\lambda_1\bc_1 + \lambda_2\bc_2),(\lambda_1\bc_1 + \lambda_2\bc_2)}.
 \]
 Taking the limit $q \raw 1$, will give us the usual attaching monomial.

Moreover, in the Hall algebra multiplication, we build up the representation $V_2$ of dimension vector $\lambda_1\bc_1 + \lambda_2\bc_2$ from the exact sequence.
 Thus, we have a filtration
\[
0 \subset V_1 \subset V_2
\]
with the quotient $V_2/ V_1 = C_1^{\oplus \lambda_1}$.
\end{proof}

\subsection{The \texorpdfstring{$k$}--th bending} \label{sec:kbend}

The bendings of the broken line after the second bending would behave similarly. 
Thus we can repeat the last section inductively.
Assume now $\gamma$ is having the $j$-th bending from $\frakd_{j-1}^+$ to $\frakd_j$.
 From the previous $j-1$ bendings, we have the  Hall algebra monomial 
 \[[\calG^{j-1}_{\lambda_1 , \cdots , \lambda_{j-1},\bc_{j-1}} \raw \calM] z^{-(\calE (d),0)}.\]
 And there is the filtration
 \[
 0 \subset V_1 \subset \cdots \subset V_{j-1}.
 \] 
 
Repeat the calculation in Section \ref{sec:2bending} for bending over $\frakd_j$ and taking $\lambda_j$ copies of $C_j$, we will then have the space $S^j$ having the same Poincar\'{e} polynomial as
 \[ [\Aff^{\lambda_j \Ext^1 (V_{j-1}, C_j)} \times
 \Gr(\lambda_j, \Hom(C_j, D/ V_{j-1} )   - \Ext^1 (V_{j-1}, C_j))].
 \]

This also builds up one more piece in the filtration
\[
0 \subset V_1 \subset \cdots \subset V_j,
\]
where $V_j / V_{j-1} = C_j^{\oplus \lambda_j}$.
After the last bending of $\gamma$, notice that in our set up, we have
\[
	\textbf{c} = \sum_{j=1}^s \lambda_j\bc_j.
\]
Thus the Hall algebra multiplication allows us to build up a filtration for a subrepresentation $C$ of $D$ with dimension vector $\textbf{c}$.
\begin{equation} \label{eqn:filtration}
	 0 \subset V_1 \subset V_2 \subset \cdots \subset V_s =E, 
\end{equation}
where $V_j / V_{j-1} = C_j^{\oplus \lambda_j}$.

This finished proofing Theorem \ref{thm:morebending}:

\begin{theorem} \label{thm:kbend}
If $\gamma$ is taking the $j$-th bending from $\frakd_{j-1}^+$ to $\frakd_j$.
From the $j-1$-th bending, we obtain the filtration
\[
 0 \subset V_1 \subset \cdots \subset V_{j-1},
 \]
 with the quotient $V_{\ell}/ V_{\ell-1} = C_{\ell-1}^{\oplus \lambda_{\ell-1}}$.
Then the Hall algebra monomial associated to the $j$-th linear piece of the broken line would be
\[[\calG^{j-1}_{\lambda_1 , \cdots , \lambda_{j-1},\bc_{j-1}} \raw \calM] z^{-(\calE (d),0)},\]
where $[\calG^{j-1}_{\lambda_1 , \cdots , \lambda_{j-1},\bc_{j-1}} \raw \calM]$ 
is space having the Poincar\'{e} polynomial as
 \[ [\Aff^{\lambda_j \Ext^1 (V_{j-1}, C_j)} \times
 \Gr(\lambda_j, \Hom(C_j, D/ V_{j-1} )   - \Ext^1 (V_{j-1}, C_j))].
 \]
 At the last bending, 
 we obtain filtration for a subrepresentation $C$ of $D$ with dimension vector $\textbf{c}$.
\begin{equation*} 
	 0 \subset V_1 \subset V_2 \subset \cdots \subset V_s =E, 
\end{equation*}
where $V_j / V_{j-1} = C_j^{\oplus \lambda_j}$.
\end{theorem}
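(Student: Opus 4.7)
The plan is to prove Theorem \ref{thm:kbend} by induction on $j$, with base cases $j=1$ and $j=2$ provided by Theorem \ref{thm:firstbending} and Theorem \ref{thm:recapsecondbend}. The inductive step transcribes the second-bending argument of Section \ref{sec:2bending}, with the filtration piece $V_{j-1}$ taking over the role played there by $V_1=C_1^{\oplus\lambda_1}$.

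Suppose inductively that the monomial attached just before the $j$-th bending is the one supplied by the $(j-1)$-th bending, and that the filtration $0\subset V_1\subset\cdots\subset V_{j-1}$ with $V_\ell/V_{\ell-1}=C_\ell^{\oplus\lambda_\ell}$ has been constructed. I would first apply the wall-crossing automorphism $\Phi_{\frakD}(\frakd_j)=1_{ss}(\stab_j)^{-1}\star(-)\star 1_{ss}(\stab_j)$ to this monomial. Using the commutation relation \eqref{eqn:comm}, commuting $z^{-(\calE(d),0)}$ past $1_{ss}(\stab_j)$ produces a factor of $q$ raised to the power $\chi(\bc_j,\,d-\sum_{i<j}\lambda_i\bc_i)$. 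Positivity of the $j$-th crossing (case (B) of Section \ref{sec:strata}) together with the Ext-vanishing $\Ext^1(C_j,V_{j-1})=0$ discussed below allows me to replace $\chi$ by $\dim\Hom(C_j,D/V_{j-1})$, producing a sum
\[
 1_{ss}(\stab_j)^{-1}\star\sum_{\ell}[\calG^{j-1}]\star 1_{ss}^D(\ell,\stab_j)\,z^{-(\calE(d),0)},
\]
where $1_{ss}^D(\ell,\stab_j)$ parametrises $\stab_j$-semistable representations of dimension $\ell\bc_j$ equipped with a map to $D$.

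The combinatorial heart of the step is a direct repetition of the cancellation analysis in Section \ref{sec:2bending}. At each degree $s\bc_j$ the summands of the alternating product pair up as $\pm(B_k\star\calG^{j-1}-B_{k-1}\star\calG^{j-1}\star B')$; the analogue of diagram \eqref{diag:1} reads $0\to V_{j-1}\to U_1\to\bc_j^{r_k}\to 0$ and splits thanks to $\Ext^1(C_j,V_{j-1})=0$; the analogue of diagram \eqref{diag:2} is classified by $\Ext^1(V_{j-1},C_j^{\oplus(r_1+\cdots+r_{k-1})})$. After summing over partitions of $s$ and applying $\chi$, the paired sum collapses, exactly as in the $j=2$ case, to $q^{s\gamma_j}\binom{\eta_j-\gamma_j}{\lambda_j}_q$ with $\eta_j=\dim\Hom(C_j,D/V_{j-1})$ and $\gamma_j=\dim\Ext^1(V_{j-1},C_j)$. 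Restricting to $s=\lambda_j$ (so that the total dimension vector of the subrepresentation equals $\sum_i\lambda_i\bc_i$) identifies the Poincaré polynomial of the space attached after the $j$-th bending with $[\Aff^{\lambda_j\gamma_j}\times\Gr(\lambda_j,\eta_j-\gamma_j)]$, and the short exact sequence $0\to V_{j-1}\to V_j\to C_j^{\oplus\lambda_j}\to 0$ implicit in the Hall product extends the filtration. Iterating to the last wall $\frakd_s$ assembles the full filtration \eqref{eqn:filtration}.

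The main obstacle is justifying the Ext-vanishing $\Ext^1(C_j,V_{j-1})=0$ — this is what makes the analogue of \eqref{diag:1} split and hence what forces the cancellation to produce a Grassmannian rather than a more intricate moduli. Positivity of each individual crossing only yields the pairwise vanishing $\Ext^1(C_{i+1},C_i)=0$ (case (B) of Section \ref{sec:strata}). To upgrade this to $\Ext^1(C_j,C_i)=0$ for every $i<j$, I would iterate Theorem \ref{thm:arquiver} to obtain that $C_j$ is an AR-quiver predecessor of every $C_i$ with $i<j$ (or else lies strictly earlier in the order $\calI\to\calR\to\calP$), and then invoke acyclicity of $\calP(\C Q)$ and $\calI(\C Q)$ (Proposition \ref{thm:AR_component}) together with Lemma \ref{thm:extnonzero} inside a single component, Lemma \ref{thm:lemmaPRI} across the three components, and the Auslander--Reiten formula (Theorem \ref{thm:ARformula}) in the remaining mixed cases. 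A dévissage through the long exact sequences attached to $0\to V_{i-1}\to V_i\to C_i^{\oplus\lambda_i}\to 0$ then propagates these pairwise vanishings to $\Ext^1(C_j,V_{j-1})=0$, completing the induction.
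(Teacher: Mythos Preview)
Your proposal is correct and follows essentially the same approach as the paper: the paper's argument in Section \ref{sec:kbend} is literally ``repeat the calculation in Section \ref{sec:2bending} inductively'', which is exactly what you do. In fact you are more careful than the paper, since the paper does not explicitly justify the vanishing $\Ext^1(C_j,V_{j-1})=0$ needed for the analogue of diagram \eqref{diag:1} to split, whereas you correctly isolate this as the key point and supply an argument via transitivity of the predecessor relation (Theorem \ref{thm:arquiver}) together with the lemmas of Section \ref{rmk:ext_zero}.
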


\subsection{Harder-Narasimhan filtration} \label{sec:HN}

In two dimensions, a broken line $\gamma$ can actually describe a Harder-Narasimhan filtration for a subrepresentation $E \subset D$.

Let $\enpt$ be in the positive chamber. Now consider the stability function $Z: K(\rep (Q)) \raw \C$ as
\[
	Z (F) = (\calE (\textbf{d}) - p^*(\textbf{c})) \cdot \textbf{f} + i \enpt \cdot \textbf{f}.
\]
Now consider the line $\beta: (0,1) \raw \frakD$ starting from $-\calE (d) - p^*(e))$ and ending at $\enpt$. $\beta$ is parametrized by 
 $\beta(t) = (1-t) (-\calE (\textbf{d})+ p^*(\textbf{c})) +t \enpt$.
 When $\beta$ cross the walls $\frakd$ with normal vector $f \in N^+$ at $t$,
 we have
 \[
 \langle (1-t) (-\calE (\textbf{d}) + p^*(\textbf{c})) +t \enpt, \textbf{f} \rangle = 0. 
 \]
 Solving the above will give
 \[
 t^{-1} -1 = \frac{\enpt \cdot \textbf{f}}{(\calE (\textbf{d}) - p^*(\textbf{c}))(\textbf{f})}.
 \]
This is the slope of $Z$.
 
Let $\gamma$ be the broken line we considered in the beginning of Section \ref{sec:strata}.
 If $\gamma$ has good bending over $\frakd$, then $-(\calE (d) - p^*(\textbf{c}))(-\textbf{f}) \geq 0$ from the properties of $\gamma$.
 Thus $(\calE (\textbf{d}) - p^*(\textbf{c}))(\textbf{f}) \geq 0$.
 Therefore, $\frac{\enpt \cdot \textbf{f}}{(\calE (\textbf{d}) - p^*(\textbf{c}))(\textbf{f})}$ has the same ordering as $\phi (F) = \frac{1}{\pi} \arg Z(F)$.
 
 As $t^{-1} -1$ is strictly decreasing, $\phi(F)$ is decreasing along $\beta$.
 Therefore slopes of the quotients in the filtration \eqref{eqn:filtration} are strictly decreasing.
 Hence the filtration \eqref{eqn:filtration} is a Harder-Narasimhan filtration.
 
\subsection{Example}
 
Let us illustrate what we have in this section by an example. We will consider the Kronecker 2-quiver $Q_2$.
Since the $p^*$ map is injective in this case. We will simply consider the $\calA$ scattering diagram instead of the $\cAp$.
Take the indecomposable representation $\C^5 \rraws \C^6$. Then we will consider the subrepresentation $E$ with dimension vector $(2,4)$. By combining theta function and the Caldero-Chapoton formula, we learn that the Euler characteristic is $\chi (\C^2 \rraws \C^4, \C^5 \rraws \C^6) = 18$.

In terms of broken line language, we calculate
\[
-\calE(5,6) = (7, -6), \qquad -\calE(5,6)+p^*(2,4) = (-1,-2).
\]
Therefore, we are looking for broken line $\gamma$ with initial slope $(7, -6)$, final slope $(-1,-2)$ and endpoint at some point $\enpt$ in the positive chamber.

If we set $\enpt $ as a irrational point near $(2,1)$, we will obtain two broken lines $\gamma_1$ (blue) and $\gamma_2$ (red) as shown in Figure \ref{fig:24in56}.
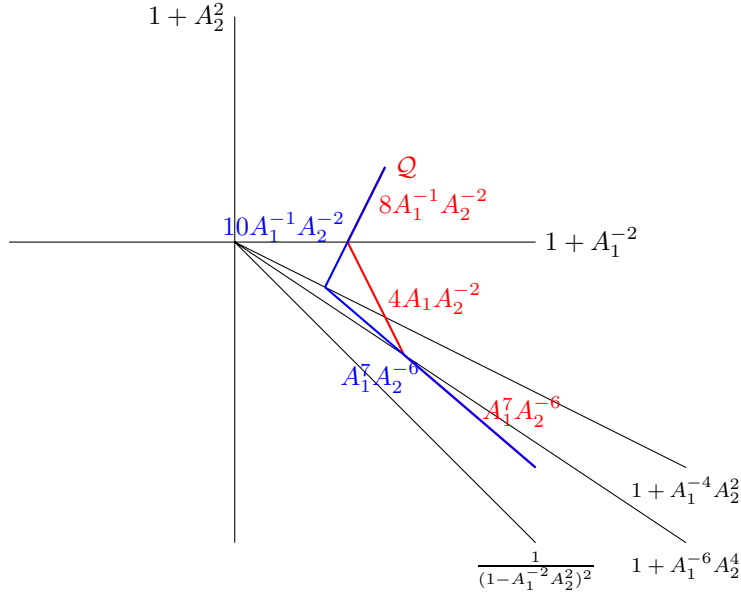
\begin{figure}
\centering
\begin{tikzpicture}
\draw
(-3,0) -- (4,0) node[right] {$1+A_1^{-2}$}
(0,-4) -- (0,3) node[left] {$1+A_2^2$}
(0,0) -- (4,-4) node[below] {\footnotesize{$\frac{1}{(1-A_1^{-2} A_2^2)^2}$}}
(0,0) -- (6,-4) node[below] {\footnotesize{$1+A_1^{-6}A_2^4$}}
(0,0) -- (6,-3) node[below] {\footnotesize{$1+A_1^{-4}A_2^2$}};

\draw[red, thick]
(1.5,0) --node[right=1pt] {$8 A_1^{-1}A_2^{-2}$} (2,1) node[right] {$\enpt$}
(9/4, -3/2) --node[right=1pt] {$4A_1 A_2^{-2}$} (1.5, 0)
(4, -3) --node[right=1pt] {$A_1^7 A_2^{-6}$}(9/4, -3/2);

\draw[blue, thick]
(6/5,-3/5) --node[left] {$10 A_1^{-1}A_2^{-2}$} (2,1) 
(4, -3) --node[left] {$A_1^7 A_2^{-6}$} (6/5,-3/5)
;
\end{tikzpicture}
\caption{$\C^2 \rightrightarrows \C^4 \subset \C^5 \rightrightarrows \C^6$}
\label{fig:24in56}
\end{figure}

Let us first start with $\gamma_1$ (the blue line).
 The broken line $\gamma_1$ bends over the wall $\frakd=\{ \R_{\geq 0} (2,-1), 1+z^{(-4,2)} \}$. From Section \ref{sec:firstbending}, we see that the bending gives two copies of $\C \rraws \C^2$. Therefore the filtration for $E_1$ would be
\[
0 \subset (\C \rraws \C^2)^2 = C_1.
\]
 Furthermore the Hall algebra element after bending is $\calG^1_{2, (1,2)}(\C^5 \rraws \C^6) \raw \calM$,
 where the objects are $\C^2 \rraws \C^4$ with a map $(\C^2 \rraws \C^4) \xrightarrow{\psi} (\C^5 \rraws \C^6)$ such that $\ker \psi$ does not contain any $\C \rraws \C^2$.
 Thus the space is 
 \[\Gr (2, \Hom \left( (\C \rraws \C^2), (\C^5 \rightrightarrows \C^6) \right).\]
 
Next, we have $\gamma_2$.
 The broken line $\gamma_2$ has two bendings: first at the wall
 $\frakd_1 = \{ \R_{\geq 0} (3,-2), 1+z^{(-6,4)} \}$ then at the wall
 $\frakd_2 = \{ \R(1,0), 1+z^{(-2,0)} \}$.
 From the calculation of slopes, we know that both bendings take one copy of $\C^2 \rraws \C^3$, $0 \rraws \C$ respectively.
 This implies the filtration
 \[
 0 \subset \C^2 \rraws \C^3 \subset \C^2 \rraws \C^4 =: E_2,
 \]
 showing that $E_2 = (\C^2 \rraws \C^3) \oplus (0 \rraws \C)$.
 
The first bending of $\gamma_2$ is similar to $\gamma_1$.
 Thus we obtain $[\calG^1_{1, (2,3) }(\C^5 \rraws \C^6)\raw \calM]$ by Proposition \ref{thm:firstbending}. The second bending over a general point $\stab_2 \in \frakd_2$ yields a term in
\[
 1_{ss}(\stab_2) ^{-1}  \star [\calG^1_{1, (2,3) }(\C^5 \rraws \C^6)\raw \calM] z^{(7,-6)} \star 1_{ss}(\stab_2).
\]
Write $\calG = [\calG^1_{1, (2,3) }(\C^5 \rraws \C^6)\raw \calM]$.
 As we only need degree 1 in this case, we only need to  consider
\begin{equation} \label{eqn:example}
 \calG z^{(7,-6)} \star [ GL_1(0 \rraws \C) \raw \calM] - [GL_1(0 \rraws \C) \raw \calM] \star \calG z^{(7,-6)}.
\end{equation}

After commuting $z^{(7,-6)} $ and $ BG_m(0 \rraws \C)$, the first term becomes
\begin{align*}
& \calG z^{(7,-6)} \star [BG_m(0 \rraws \C) \raw \calM] \\
= & \calG q^{\dim \Hom(0 \rraws \C, \C^5 \rraws \C^6)} [BG_m(0 \rraws \C) \raw \calM] z^{(7,-6)} \\
= & \calG \star 1^D_{ss} (1, \stab_2)  z^{(7,-6)}
\end{align*}

The product consists of diagrams
\begin{equation} \label{seq:first}
\begin{tikzcd}
0 \arrow[r] & ( \C^2 \rraws \C^3) \arrow[r] \arrow[d] &  (\C^2 \rraws \C^4) \arrow[r] \arrow[dl]& (0 \rraws \C) \arrow[r] & 0 \\
& (\C^5 \rraws \C^6) & & &
\end{tikzcd}
\end{equation}
where the map $(\C^2 \rraws \C^4) \raw (\C^5 \rraws \C^6)$ extends the map $(\C^2 \rraws \C^3) \raw (\C^5 \rraws \C^6)$.

Note that $\Ext^1(0 \rraws \C, \C^2 \rraws \C^3) =0$. Thus the term $\C^2 \rraws \C^4$ in the exact sequence above is actually $(0 \rraws \C) \oplus ( \C^2 \rraws \C^3)$.
 In Section \ref{sec:2bending}, we obtained the space
\[\left[\frac{\Hom(0 \rraws \C, \C^5 \rraws \C^6 / \C^2 \rraws \C^3)}{GL_1(0 \rraws \C)} \right]\]
after fixing $(\C^2 \rraws \C^3) \raw (\C^5 \rraws \C^6)$. 
 Applying the integration map, we have
 \[
 \chi(\left[\frac{\Hom(0 \rraws \C, \C^5 \rraws \C^6 / \C^2 \rraws \C^3)}{GL_1(0 \rraws \C)}\right]) = \frac{q^{\eta}}{q-1},
 \]
 where $\eta = \dim \Hom(0 \rraws \C, \C^5 \rraws \C^6 / \C^2 \rraws \C^3)$.
 
 The second term $ BGL_1(0 \rraws \C) \star \calG z^{(7,-6)}$ gives
 \begin{equation} \label{seq:second}
  \begin{tikzcd}
 0 \arrow[r] & (0 \rraws \C) \arrow[r]  &  (\C^2 \rraws \C^4) \arrow[r] & ( \C^2 \rraws \C^3) \arrow[r]\arrow[d] & 0 \\
&  & &(\C^5 \rraws \C^6) &
 \end{tikzcd}
 \end{equation}
 which yields $\left[\frac{Ext^1(\C^2 \rraws \C^3,0 \rraws \C )}{GL_1(0 \rraws \C )} \right]$ and\[\chi (\left[\frac{Ext^1(\C^2 \rraws \C^3,0 \rraws \C )}{BL_1(\C^5 \rraws \C^6 )} \right]) = \frac{q^\gamma}{q-1},\]
  where $\gamma = \dim Ext^1(\C^2 \rraws \C^3,0 \rraws \C )$.
 Combining together, we have
 \[\frac{q^{\eta}}{q-1} - \frac{q^\gamma}{q-1} = q^\gamma \frac{q^{\eta-\gamma}-1}{q-1} = q^\gamma \dbinom{\eta-\gamma}{1}_q.\]
 This is saying the moduli space $S^2$ has the same Poincar\'{e} polynomial as 
 \[\Aff^1 \times \left( \Gr\left(1, \Hom(0 \rraws \C, \C^5 \rraws \C^6 / \C^2 \rraws \C^3) - \Ext^1(\C^2 \rraws \C^3,0 \rraws \C )\right) \right).\]
 
 Furthermore, note that our stability function is
 \[ Z(F) = -(-1,-2) \cdot F + i (2,1) \cdot F\]
 
 By calculating, we have $Z(2,3) = 8 + 7i$ and $Z(0,1) = 2 + i$. Thus $\phi(F) = \frac{1}{\pi} \arg Z(F)$ is decreasing. Hence the filtration 
  \[
 0 \subset \C^2 \rraws \C^3 \subset \C^2 \rraws \C^4 
 \]
 is Harder-Narasimhan.

\bibliographystyle{alpha} 

\bibliography{biblio} 

\newcommand{\etalchar}[1]{$^{#1}$}
\begin{thebibliography}{CGM{\etalchar{+}}17}

\bibitem[ASS06]{eltofrep1}
I.~Assem, D.~Simon, and A.~Skowronski.
\newblock {\em {Elements of the Representation Theory of Associative Algebras,
  1: Techniques of Representation Theory}}, volume~65 of {\em {London
  Mathematical Society Student Texts}}.
\newblock Cambridge University Press, 2006.

\bibitem[BMR{\etalchar{+}}06]{BMRRT}
Aslak~Bakke Buan, Robert Marsh, Markus Reineke, Idun Reiten, and Gordana
  Todorov.
\newblock Tilting theory and cluster combinatorics.
\newblock {\em Advances in mathematics}, 204(2):572--618, 2006.

\bibitem[Bri12]{bridge_hall}
T.~Bridgeland.
\newblock {An introduction to motivic Hall algebras}.
\newblock {\em Advances in Mathematics}, 229:102–--138, 2012.

\bibitem[Bri16]{Bridge}
Tom Bridgeland.
\newblock Scattering diagrams, hall algebras and stability conditions.
\newblock {\em arXiv preprint arXiv:1603.00416}, 2016.

\bibitem[CC06]{ccformula}
P.~Caldero and F.~Chapoton.
\newblock Cluster algebras as hall algebras of quiver representations.
\newblock {\em Commentarii Mathematici Helvetci}, 81:595--616, 2006.

\bibitem[CGM{\etalchar{+}}17]{cheung2015greedy}
Man-Wai Cheung, Mark Gross, Greg Muller, Gregg Musiker, Dylan Rupel, Salvatore
  Stella, and Harold Williams.
\newblock The greedy basis equals the theta basis: A rank two haiku.
\newblock {\em Journal of Combinatorial Theory, Series A}, 145:150 -- 171,
  2017.

\bibitem[Che]{note}
Man-Wai Cheung.
\newblock Lecture notes on cluster algebras and cluster varieties.

\bibitem[CK08]{caldero2008triangulated}
Philippe Caldero and Bernhard Keller.
\newblock From triangulated categories to cluster algebras.
\newblock {\em Inventiones Mathematicae}, 172(1):169--211, 2008.

\bibitem[CM19]{cheung2019donaldson}
Man-Wai Cheung and Travis Mandel.
\newblock Donaldson-thomas invariants from tropical disks.
\newblock {\em arXiv preprint arXiv:1902.05393}, 2019.

\bibitem[CPS10]{CPS}
Michael Carl, Max Pumperla, and Bernd Siebert.
\newblock A tropical view of {L}andau-{G}inzburg models.
\newblock {\em
  http://www.math.uni-hamburg.de/home/siebert/preprints/LGtrop.pdf}, 8:406,
  2010.

\bibitem[CZ06]{CZformula}
P.~Caldero and A.~Zelevinskii.
\newblock { Laurent expansions in cluster algebras via quiver representations}.
\newblock {\em Mosc. Math. J.}, 6(3):411--429, 2006.

\bibitem[DT13]{dupont2013atomic}
Gr{\'e}goire Dupont and Hugh Thomas.
\newblock Atomic bases of cluster algebras of types a and a.
\newblock {\em Proceedings of the London Mathematical Society},
  107(4):825--850, 2013.

\bibitem[FG06]{FG}
V.~Fock and A.~B. Goncharov.
\newblock Moduli spaces of local systems and higher teichm\"{u}ller theory.
\newblock {\em Publ. Math. de l'IH\'{E}S}, pages 1--211, 2006.

\bibitem[FWZ16]{introcl123}
Sergey Fomin, Lauren Williams, and Andrei Zelevinsky.
\newblock Introduction to cluster algebras. chapters 1-3.
\newblock {\em arXiv preprint arXiv:1608.05735}, 2016.

\bibitem[FZ02]{cluster1}
S.~Fomin and A.~Zelevinsky.
\newblock Cluster algebras {I}: Foundations.
\newblock {\em J. Amer. Math. Soc.}, 15:497--–529, 2002.

\bibitem[FZ03]{cluster2}
S.~Fomin and A.~Zelevinsky.
\newblock Cluster algebras {II}: Finite type classification.
\newblock {\em Invent. Math.}, 154:63–--121, 2003.

\bibitem[FZ07]{cluster4}
Sergey Fomin and Andrei Zelevinsky.
\newblock Cluster algebras {IV}: Coefficients.
\newblock {\em Compositio Mathematica}, 143(1):112--164, 2007.

\bibitem[GHK11]{GHKlog}
Mark Gross, Paul Hacking, and Sean Keel.
\newblock Mirror symmetry for log calabi-yau surfaces {I}.
\newblock {\em Publications math{\'e}matiques de l'IH{\'E}S}, pages 1--104,
  2011.

\bibitem[GHK15]{GHK_bir}
M.~Gross, P.~Hacking, and S.~Keel.
\newblock {Birational geometry of cluster algebras}.
\newblock {\em Algebraic Geometry}, 2(2):137--175, 2015.

\bibitem[GHKK18]{ghkk}
Mark Gross, Paul Hacking, Sean Keel, and Maxim Kontsevich.
\newblock Canonical bases for cluster algebras.
\newblock {\em Journal of the American Mathematical Society}, 31(2):497--608,
  2018.

\bibitem[GLS12]{geiss2012generic}
Christof Geiss, Bernard Leclerc, and Jan Schr{\"o}er.
\newblock Generic bases for cluster algebras and the chamber ansatz.
\newblock {\em Journal of the American Mathematical Society}, 25(1):21--76,
  2012.

\bibitem[GPS10]{GrossPandharipandeSiebert10}
M.~Gross, R.~Pandharipande, and B.~Siebert.
\newblock {The tropical vertex}.
\newblock {\em Duke Math. J.}, 153(2):297--362, 2010.

\bibitem[Gro10]{grossp2}
Mark Gross.
\newblock Mirror symmetry for {$\Bbb P^2$} and tropical geometry.
\newblock {\em Adv. Math.}, 224(1):169--245, 2010.

\bibitem[GS11]{GS}
Mark Gross and Bernd Siebert.
\newblock From real affine geometry to complex geometry.
\newblock {\em Annals of mathematics}, 174(3):1301--1428, 2011.

\bibitem[Joy07]{joyce2007configurations}
Dominic Joyce.
\newblock Configurations in abelian categories. {II}. ringel--hall algebras.
\newblock {\em Advances in Mathematics}, 210(2):635--706, 2007.

\bibitem[Kel12]{keller_cat}
B.~Keller.
\newblock {Cluster algebras and derived categories}.
\newblock 2012.
\newblock preprint, arXiv:1202.4161.

\bibitem[Kel14]{keller2014cluster}
Bernhard Keller.
\newblock Cluster algebras and cluster monomials.
\newblock In {\em European Congress of Mathematics Krak{\'o}w, 2--7 July,
  2012}, pages 399--413, 2014.

\bibitem[KS06]{KS}
Maxim Kontsevich and Yan Soibelman.
\newblock {\em Affine structures and non-Archimedean analytic spaces}.
\newblock Springer, 2006.

\bibitem[MSW13]{MSW}
Gregg Musiker, Ralf Schiffler, and Lauren Williams.
\newblock Bases for cluster algebras from surfaces.
\newblock {\em Compositio Mathematica}, 149(2):217--263, 2013.

\bibitem[NZ12]{nakanishi2012tropical}
Tomoki Nakanishi and Andrei Zelevinsky.
\newblock On tropical dualities in cluster algebras.
\newblock {\em Contemp. Math}, 565:217--226, 2012.

\bibitem[Sch14]{schiffler_book}
R.~Schiffler.
\newblock {\em {Quiver Represnentations}}.
\newblock {CMS Books in Mathematics}. Springer International Publishing
  Switzerland, 2014.

\end{thebibliography}
\Addresses
\end{document}